\numberwithin{equation}{subsection}
\newtheorem{theorem}{Theorem}[section]
\newtheorem*{itheorem}{Theorem}
\newtheorem*{iproposition}{Proposition}
\newtheorem{definition}[theorem]{Definition}
\newtheorem{remark}[theorem]{Remark}
\newtheorem{conjecture}[theorem]{Conjecture}
\newtheorem{proposition}[theorem]{Proposition}
\newtheorem{corollary}[theorem]{Corollary}
\newtheorem{lemma}[theorem]{Lemma}
\newtheorem{example}[theorem]{Example}
\def\Q{\mathbb{Q}}
\def\F{\mathbb{F}}
\def\Z{\mathbb{Z}}
\def\E{\mathcal{E}}
\def\Fc{\mathcal{F}}
\def\M{\mathcal{M}}
\def\N{\mathcal{N}}
\def\Bun{\mathrm{Bun}}
\def\Hom{\mathrm{Hom}}
\def\Lie{\mathrm{Lie}}
\def\Mod{\mathrm{Mod}}
\def\Gal{\mathrm{Gal}}
\def\Rep{\mathrm{Rep}}
\def\Spec{\mathrm{Spec}}
\def\deg{\mathrm{deg}}
\def\lra{\longrightarrow}
\def\ra{\rightarrow}
\def\st{\stackrel}
\def\tr{\textrm}
\def\Fbar{\overline{F}}
\def\Phic{\Phi^\vee}
\def\s{\sigma}
\def\Gr{\mathrm{Gr}}
\def\BpdR{\mathrm{B_{dR}^+}}
\def\e{\epsilon}
\begin{document}

\title{Fargues-Rapoport conjecture for $p$-adic period domains in the non-basic case}
\author{Miaofen Chen}
\date{}
\address{School of Mathematical Sciences\\
	Shanghai Key Laboratory of PMMP\\
	East China Normal University\\
	No. 500, Dong Chuan Road\\
	Shanghai 200241, China}\email{mfchen@math.ecnu.edu.cn}
	
\renewcommand\thefootnote{}
\footnotetext{2010 Mathematics Subject Classification. Primary: 11G18; Secondary: 14G20.\\
key words: $p$-adic period domain, Fargues-Fontaine curve, Newton stratification}

\renewcommand{\thefootnote}{\arabic{footnote}}

\begin{abstract}
	We prove the Fargues-Rapoport conjecture for $p$-adic period domains in the non-basic case with minuscule cocharacter. More precisely, we give a group theoretical criterion for the cases when the admissible locus and weakly admissible locus coincide. This generalizes the result of Hartl for the group $\mathrm{GL}_n$ (\cite{Har}). In the last section, we also give a conjecture about the intersection of weakly admissible locus and the Newton strata in the flag variety.
\end{abstract}

\maketitle
\setcounter{tocdepth}{1}
\tableofcontents

\section*{Introduction}
Let $F$ be a finite extension of $\Q_p$, and let $\breve F$ be the $p$-adic completion of the maximal unramified extension of $F$ with Frobenius $\sigma$. We consider the flag variety $\Fc(G, \mu)$ assoicated to a pair $(G, \mu)$, where $G$ is a reductive group over  $F$, and $\mu$ a minuscule cocharacter of $G$. It's a projective variety over the local reflex field $E=E(G, \{\mu\})$, the field of definition of the geometric conjugacy class $\{\mu\}$ of $\mu$. We still denote by $\Fc(G, \mu)$ the associated adic space over $\breve E$. For any $b\in G(\breve F)$ satisfying certain conditions with respective to $\mu$ (cf. \ref{section_wa}), we are interested in two open adic subspaces
\[\Fc(G, \mu, b)^{a}\subseteq \Fc(G, \mu, b)^{wa}\subseteq \Fc(G, \mu)\] inside $\Fc(G, \mu)$, where $\Fc(G, \mu, b)^{wa}$ is the weakly admissible locus, defined by Rapoport and Zink (\cite{RZ}) by removing a profinite number of Schubert varieties inside $\Fc(G, \mu)$ which contradicts the weakly admissibility condition defined by Fontaine (cf. section \ref{section_wa}), and where $\Fc(G, \mu, b)^a$ is the admissible locus, or called the $p$-adic period domain. It's much more mysterious. The existence of the admissible locus has been conjectured by Rapoport and Zink. It's characterized by the properties that it has the same classical points as the weakly admissible locus (see \cite{CoFo},  in which way weakly admissible locus is also considered as its algebraic approximation) and  there exists a local system with $G$-structures on it which interpret the crystalline representations on all classical points. When the triple $(G, \mu, b)$ is of PEL type, the admissible locus $\Fc(G,\mu,b)^a$ is the image of the $p$-adic period mapping from the Rapoport-Zink space associated to $(G, \mu, b)$ to the flag variety $\Fc(G, \mu)$. There is also direct construction of $\Fc(G, \mu, b)^a$ in special cases by Hartl \cite{Har} and Faltings \cite{Fal}.   In the most general case,  the existence of the admissible locus equipped with the \'etale local system is known due to the work of Fargues-Fontaine (\cite{FF}), Kedlaya-Liu (\cite{KL}) and Scholze (\cite{Sch1}). In fact, the admissible locus is defined by semi-stable conditions on the modification of the $G$-bundle associated to $b$ of type $\mu$ on the Fargues-Fontaine curve. Moreover, it can also be considered as the image of the $p$-adic period mapping from the local Shimura variety associated to $(G, \mu, b)$ to the flag variety $\Fc(G, \mu)$  (\cite{Sch}, \cite{Ra2}).

We want to understand the structure of the $p$-adic period domains. As the structure of its approximation, the weakly admissible locus, is well known, it's natural to ask when the $p$-adic period domain coincides with the weakly admissible locus. Hartl classified all the cases for $G=\mathrm{GL}_n$ in \cite{Har}. For general group $G$, Rapoport and Fargues have conjectured a group theoretic criterion when $b$ is basic, which has now become the following theorem.
\begin{itheorem}[Fargues-Rapoport conjecture, \cite{CFS}, Theorem 6.1]\label{theo:CFS basic}Suppose $b$ is basic. The equality
 $\Fc(G,\mu, b)^{wa}=\Fc(G,\mu, b)^{a}$ holds if and only if $(G, \mu)$ is fully Hodge-Newton decomposable.
\end{itheorem}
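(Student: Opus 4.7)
The plan is to prove both implications via the Newton stratification of $\Fc(G,\mu,b)$. Since $b$ is basic, modifying $\Ec_b$ at the distinguished point of the Fargues-Fontaine curve by a point $x\in\Fc(G,\mu)$ produces a $G$-bundle $\Ec_x$ whose isomorphism class $[b_x]$ lies in $B(G,\mu)$, and $x$ is admissible if and only if $[b_x]=[1]$. This gives a stratification of $\Fc(G,\mu,b)$ indexed by $B(G,\mu)$ in which the admissible locus is precisely the open stratum, so the question reduces to understanding when every non-open Newton stratum is disjoint from the weakly admissible locus.

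For the direction ``fully Hodge-Newton decomposable $\Rightarrow \Fc(G,\mu,b)^{wa}=\Fc(G,\mu,b)^{a}$,'' I would take a non-admissible point $x$, so $[b_x]=[b']$ for some non-basic $[b']\in B(G,\mu)$, and derive a contradiction with weak admissibility. Hodge-Newton decomposability of $[b']$ provides a proper standard Levi $M\subset G$ such that $\mu$ is $G$-conjugate to a cocharacter of $M$ and the Newton defect $\nu_{b'}$ minus the average of $\mu$ is central in $M$ but not in $G$. Correspondingly, $\Ec_x$ acquires a canonical reduction to $M$, which via the Bialynicki-Birula isomorphism $\Gr_{G,\mu}\simeq\Fc(G,\mu)$ (using that $\mu$ is minuscule) lifts to a reduction of the pair $(b,x)$ to the standard parabolic $P$ with Levi $M$. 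Pairing with any character $\chi$ of $Z_M$ nontrivial on $Z_M/Z_G$ then yields a subfiltration whose Newton slope $\langle\chi,\nu_{b'}\rangle$ strictly exceeds its Hodge slope $\langle\chi,\mu\rangle$, contradicting weak admissibility.

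For the converse, I would assume some non-basic $[b']\in B(G,\mu)$ fails HN-decomposability, and construct a weakly admissible point in the Newton stratum $\Fc(G,\mu,b)^{[b']}$, necessarily non-admissible. The strategy is to argue that the profinite union of Schubert cells excluded by the weak admissibility condition cannot exhaust this stratum: the obstruction to HN-decomposability prevents any proper standard Levi from governing the pair $(\mu,[b'])$ in the required sense, so no single parabolic reduction forces a universal slope violation on $\Fc(G,\mu,b)^{[b']}$. I would make this concrete either by a codimension count comparing the violating Schubert loci against the stratum, or by a direct construction of a weakly admissible representative induced from a smaller controlled $(M',\mu_{M'})$-sub-situation where the analogous problem has been solved.

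The main technical obstacle will be the $(\Leftarrow)$ direction, in particular the passage from the HN reduction of $\Ec_x$ on the Fargues-Fontaine curve to a parabolic reduction of the Hodge filtration $x$ that is visible to Fontaine's numerical test. This requires the Bialynicki-Birula isomorphism in families together with its equivariance under Levi/parabolic reductions, and a clean slope matching that equates Fargues-Fontaine slopes (read off $\nu_{b'}$) with algebraic Hodge slopes (read off $\mu$) on the central characters of $M$. Full HN-decomposability is precisely the hypothesis that makes this inequality strict on a witnessing character of $Z_M/Z_G$, delivering the contradiction; without it, one expects to land in the situation produced in the converse direction.
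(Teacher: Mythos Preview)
This theorem is not proved in the present paper; it is quoted from \cite{CFS}. What the paper records of that proof is the finer statement of Theorem~\ref{theo_CFS}: for basic $b$, a Newton stratum $\Fc(G,\mu,b)^{[b']}$ is disjoint from $\Fc(G,\mu,b)^{wa}$ whenever $(G,\nu_b(w_0\mu^{-1})^{\diamond},b')$ is HN-decomposable, and meets $\Fc(G,\mu,b)^{wa}$ whenever $[b']$ is a minimal non-trivial HN-indecomposable class. Your overall plan---reduce to Newton strata, kill the HN-decomposable ones by exhibiting a destabilizing parabolic reduction, and in the converse produce a weakly admissible point in some non-trivial stratum---matches this shape. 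Two substantive points, however, need correction.

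First, the indexing set is wrong. The modifications $\E_{b,x}$ of $\E_b$ of type $\mu$ are \emph{not} classified by $B(G,\mu)$; by Proposition~\ref{prop:description modification basique} they range over the dual generalized Kottwitz set $B(G,\kappa_G(b)-\mu^{\sharp},\nu_b\mu^{-1})$. The hypothesis ``$(G,\mu)$ fully HN-decomposable'' is a condition on $B(G,\mu)$, not on this dual set, so you cannot apply it directly to $[b']=[b_x]$. One needs an additional argument (supplied in \cite{CFS}) that full HN-decomposability of $B(G,\mu)$ is equivalent to full HN-decomposability of $B(G,\kappa_G(b)-\mu^{\sharp},\nu_b\mu^{-1})$; only then does your ``$(\Leftarrow)$'' paragraph go through. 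Your description of the slope contradiction is also slightly off: the relevant inequality comes from the canonical reduction of $\E_{b,x}\simeq\E_{b'}$ transported back to a reduction of $\E_b$ via Lemma~\ref{lemma_redction modfication}, and the strictness is read off $\nu_{\E_{b'}}$, not from ``$\nu_{b'}$ minus the average of $\mu$''.

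Second, your ``$(\Rightarrow)$'' direction is where the real content lies, and neither of your two suggested mechanisms is what actually works. A dimension/codimension count is not how \cite{CFS} proceeds, and the vague ``direct construction induced from a smaller sub-situation'' is not a method. The argument in \cite{CFS} (mirrored in this paper's proof of Proposition~\ref{Prop_not basic}) is: take a \emph{minimal} non-trivial HN-indecomposable $[b']$, produce a specific $x$ with $\E_{b,x}\simeq\E_{b'}$ via a carefully chosen Levi reduction, and then act by a nontrivial unipotent element $\gamma\in\tilde J_b^{\geq\lambda_{\max}}(K)$. The point $\gamma(x)$ stays in the same Newton stratum (so is not admissible), but the minimality of $[b']$ forces any destabilizing reduction of $\E_{b,\gamma(x)}$ to come from the canonical one, which one then shows is incompatible with the $\gamma$-twist. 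This use of the $\tilde J_b$-action is the key idea you are missing.
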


Recently, Shen also generalizes the Fargues-Rapoport conjecture for non-minuscule cocharacters in \cite{Sh2}. Here the fully Hodge-Newton-decomposability condition is purely group theoretic. This notion is first introduced and systematically studied by G\"ortz, He and Nie in their article \cite{GoHeNi}, where they classified all the fully Hodge-Newton decomposable pairs and give more equivalent conditions of fully Hodge-Newton decomposability. When $G$ is a general linear group, the triple $(G, \mu, b)$ is Hodge-Newton-indecomposable if all the breakpoints of the Newton polygon defined by $b$ do not touch the Hodge polygon defined by $\mu$. Otherwise, the triple is called Hodge-Newton-decomposable. Let $B(G)$ be the set of $\sigma$-conjugacy classes of $G(\breve F)$. For $b\in G(\breve F)$, let $[b]\in B(G)$ be the $\sigma$-conjugacy class of $b$. An element $[b]\in B(G)$ is called basic if the Newton polygon of $b$ is central. Kottwitz defined a subset $B(G, \mu)$ in  $B(G)$. When $G$ is the general linear group, $[b]\in B(G, \mu)$ if and only if the Newton polygon of $[b]$ lies on or above the Hodge polygon of $\mu$ and they have the same endpoint. The pair $(G, \mu)$ is fully Hodge-Newton decomposable if for any non basic $[b']$ in the Kottwitz set $B(G,\mu)$, the triple $(G, \mu, b')$ is Hodge-Newton-decomposable. We refer to section \ref{section_HN-decomposable} for the details of these notions.
\

The main result of this article is a generalized version of Fargues-Rapoport conjecture which works for any $b$. It is inspired by the Fargues-Rapoport conjecture for basic elements and Hartl's result for $\mathrm{GL}_n$. For simplicity, we assume that $G$ is quasi-split in the introduction. There is also a similar description for the non-quasi-split case (Theorem \ref{theo_main_general}) in Section \ref{section_group action on the modification}.
\begin{itheorem}[Theorem \ref{theo_main}]Suppose $M$ is the standard Levi subgroup of $G$ such that $[b]\in B(M, \mu)$ and $(M, b, \mu)$ is Hodge-Newton-indecomposable. Then the equality $\Fc(G,\mu, b)^{wa}=\Fc(G,\mu, b)^{a}$ holds if and only if  $(M,\{\mu\})$ is fully Hodge-Newton-decomposable and $[b]$ is basic in $B(M)$.
\end{itheorem}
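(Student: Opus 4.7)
The plan is to reduce, via a Hodge--Newton-type comparison, the question from $(G,\mu,b)$ to the Levi triple $(M,\mu,b)$, and then to invoke Theorem~\ref{theo:CFS basic} inside $M$. Since $[b]\in B(M,\mu)$, one first chooses a representative $b\in M(\breve F)$. The core of the proof is to establish the comparison
\[
\Fc(G,\mu,b)^{wa}=\Fc(G,\mu,b)^{a}\ \Longleftrightarrow\ \Fc(M,\mu,b)^{wa}=\Fc(M,\mu,b)^{a},
\]
by exploiting the natural embedding $\Fc(M,\mu)\hookrightarrow\Fc(G,\mu)$ coming from the parabolic $P=MN$, together with the modification-of-$G$-bundle description of both loci on the Fargues--Fontaine curve. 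The assumption that $(M,b,\mu)$ is HN-indecomposable is what guarantees that no proper Levi of $M$ could capture the failure of admissibility, so that the $G$-problem genuinely restricts to an $M$-problem.

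For the ``if'' direction, assume $(M,\{\mu\})$ is fully HN-decomposable and $[b]$ is basic in $B(M)$. Theorem~\ref{theo:CFS basic} applied to the basic triple $(M,\mu,b)$ inside $M$ yields $\Fc(M,\mu,b)^{wa}=\Fc(M,\mu,b)^{a}$, and the comparison above transports this back to $G$.

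For the ``only if'' direction, suppose $\Fc(G,\mu,b)^{wa}=\Fc(G,\mu,b)^{a}$. By the comparison, the analogous equality holds for $M$. If $[b]$ were non-basic in $B(M)$ (recall that $(M,b,\mu)$ is HN-indecomposable by hypothesis), one must exhibit an explicit weakly-admissible-but-not-admissible classical point in $\Fc(M,\mu,b)$: concretely, a modification of $\Ec_b$ of type $\mu$ on the Fargues--Fontaine curve producing some $\Ec_{b'}$ whose associated filtered $M$-isocrystal satisfies Fontaine's slope inequalities, while $[b']$ remains non-trivial in $B(M)$. Such a construction generalizes Hartl's explicit $\GL_n$ examples and contradicts the assumed equality; hence $[b]$ must be basic in $B(M)$, and Theorem~\ref{theo:CFS basic} applied to the basic pair $(M,\mu,b)$ then forces $(M,\{\mu\})$ to be fully HN-decomposable.

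The main obstacle is the comparison step: one must verify that the parabolic reduction inherent in the HN-filtration of a modified $G$-bundle is compatible with \emph{both} Fontaine's weak admissibility (a condition on filtered $G$-isocrystals) and with semistability of the modified $G$-bundle (a condition on $G$-bundles on the Fargues--Fontaine curve), and that these two compatibilities interact correctly under the embedding $\Fc(M,\mu)\hookrightarrow\Fc(G,\mu)$. A secondary technical difficulty is the explicit construction of a weakly-admissible-but-not-admissible point for non-basic HN-indecomposable $[b]\in B(M,\mu)$, which extends Hartl's $\GL_n$ approach to a general Levi $M$.
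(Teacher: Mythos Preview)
Your overall architecture matches the paper's: reduce from $(G,\mu,b)$ to the Levi via a comparison of admissible and weakly admissible loci (this is Proposition~\ref{Prop_preimage_admissible}), invoke the basic-case Fargues--Rapoport theorem inside the Levi, and treat the non-basic-in-$M$ case by producing a weakly-admissible-but-not-admissible point (this is Proposition~\ref{Prop_not basic}). Two corrections are needed.

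First, a slip: you cannot seek a \emph{classical} weakly-admissible-but-not-admissible point. Over any finite extension of $\breve E$ the two loci have identical points (Colmez--Fontaine, \cite{CoFo}); the witness must be a $C$-point for $C$ a large algebraically closed perfectoid field.

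Second, and more substantively, calling the non-basic step a ``generalization of Hartl's explicit $\GL_n$ examples'' misidentifies the mechanism and undersells the main idea of the paper. The paper does \emph{not} construct such a point directly. It first produces, via a delicate root-system lemma (Lemma~\ref{Lemma_minimal_Mb-modification}), a point $x_G$ landing in a minimal non-trivial Newton stratum; this $x_G$ is \emph{not} weakly admissible. The key new idea is then to act on $x_G$ by a nontrivial element $\gamma$ in the unipotent part $\tilde J_b^{>0}(K)$ of the automorphism sheaf $\tilde J_b=\underline{\mathrm{Aut}}(\E_b)$. The $\tilde J_b$-action preserves Newton strata, so $\gamma(x_G)$ remains non-admissible; but one argues that $\gamma(x_G)$ becomes weakly admissible, because any destabilizing parabolic reduction of $\E_{b,\gamma(x_G)}$ would have to agree with the canonical one, and a Lie-algebra computation shows $\gamma$ genuinely moves that reduction. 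This $\tilde J_b$-action argument is the heart of the paper and is not a routine extension of Hartl's method.

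A minor technical point: the comparison does not literally go through $\Fc(M,\mu)\hookrightarrow\Fc(G,\mu)$ but through the opposite Levi $w_0Mw_0^{-1}$ and the twisted cocharacter $\tilde w_0\mu$, because $\nu_{\E_b}=-w_0[\nu_b]$ (Proposition~\ref{prop_invariant_bundle}). This is why Proposition~\ref{Prop_preimage_admissible} is formulated for $(\tilde M,\tilde w_0\mu,\tilde b_M)$ and why Lemma~\ref{lemma_levi_oppo} is needed to match the HN-decomposability conditions on the two sides.
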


The key ingredients of the proof of the main theorem are Proposition \ref{Prop_preimage_admissible} which describes the relation of the (weakly) admissible locus for different groups $G$ and its Levi subgroup $M$ and the following proposition.

\begin{iproposition}[Proposition. \ref{Prop_not basic}]Suppose $(G,\mu, b)$ is Hodge-Newton-indecomposable. If $b$ is not basic, then $\Fc(G, \mu, b)^a\neq \Fc(G, \mu, b)^{wa}$.
\end{iproposition}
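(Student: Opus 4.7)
The plan is to prove this by exhibiting a non-classical point of $\Fc(G,\mu)$ that lies in the weakly admissible locus but not in the admissible locus. Recall that at a point $x \in \Fc(G,\mu)$ with values in a complete algebraically closed perfectoid extension $C$ of $\breve E$, the Hodge filtration of type $\mu$ produces a $\mu$-modification $\Ec_x$ of $\Ec_b$ along the distinguished untilt on the Fargues-Fontaine curve $X_C$. Admissibility of $x$ is equivalent to $\Ec_x$ being trivial, while weak admissibility is the Rapoport-Zink slope condition on reductions to $F$-rational parabolics; on geometric classical points these coincide by Colmez-Fontaine, so the discrepancy between the two loci must be witnessed at higher-rank or non-classical points. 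The assignment $x \mapsto [\Ec_x]$ yields the Newton stratification of $\Fc(G,\mu)$, whose stratum over $[1]$ is the admissible locus. It therefore suffices to exhibit a non-trivial Newton stratum meeting the weakly admissible locus.

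Since $b$ is non-basic, the $G$-bundle $\Ec_b$ admits a canonical Harder-Narasimhan reduction to the standard parabolic $P = MN$ with $M$ the centralizer of $\nu_b$, presenting $\Ec_b$ as induced from a basic $M$-bundle. My candidate point $x$ will correspond to a Hodge filtration of type $\mu$ in general position with respect to this reduction, explicitly in the open Schubert cell indexed by a longest representative $w \in W_M \backslash W / W_M$. The key observation is that Hodge-Newton indecomposability of $(G,\mu,b)$ together with the non-basicness of $b$ forces $w$ to be non-trivial and to interact nontrivially with $\nu_b$; a direct Newton-polygon and Kottwitz-invariant computation at such $x$ should then show that $[\Ec_x] \neq [1]$ in $B(G)$, so $x$ is non-admissible.

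The crucial step is to verify weak admissibility of $x$, i.e., that the Rapoport-Zink slope inequality holds for every reduction of $\Ec_b$ compatible with the chosen Hodge filtration to every standard $F$-rational parabolic $Q$. The essential input here is the Hodge-Newton indecomposability of $(G,\mu,b)$: a failure of the slope inequality at some $Q$ with Levi $L$ would translate into a decomposition of $(\nu_b,\mu)$ through $L$ and hence an HN-decomposition of $(G,\mu,b)$, contradicting the hypothesis. The main obstacle will be this systematic verification, which effectively reverses the G\"ortz-He-Nie combinatorial classification of HN-decomposable triples. A careful case analysis on the type of $Q$, using the minusculity of $\mu$ and the semi-stability of the graded pieces of the HN-reduction of $\Ec_b$, should conclude the argument.
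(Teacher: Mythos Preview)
Your proposal has two genuine gaps, and misses the key mechanism the paper uses.

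\textbf{Gap 1: genericity gives admissibility, not its failure.} The admissible locus $\Fc(G,\mu,b)^a$ is open and nonempty (since $[b]\in A(G,\mu)$), and in fact it is contained in the open Schubert cell relative to the Harder--Narasimhan parabolic. A point ``in general position'' in that cell is therefore admissible, so your candidate $x$ as described would lie in $\Fc(G,\mu,b)^a$, not outside it. To force $[\E_x]\neq[1]$ you must choose $x$ rather specially, and the natural way to do so --- via a non-open Schubert cell for the HN parabolic --- produces a reduction of $\E_{b,x}$ to that parabolic with positive degree that \emph{does} come from a reduction of $b$, so such $x$ fails weak admissibility as well.

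\textbf{Gap 2: weak admissibility failure does not yield HN-decomposition.} Your proposed verification of weak admissibility asserts that a violating reduction to some $Q$ with Levi $L$ would give an HN-decomposition through $L$. This is false in general: weak admissibility concerns reductions $b_L$ of $b$ and degrees of the associated reductions of $\E_{b,x}$, whereas HN-decomposability is the specific condition that $\mu^\diamond-[\nu_b]$ lies in the $\Q$-span of coroots of a proper Levi containing $M_b$. These are unrelated constraints; a failure of the slope inequality at $Q$ says nothing about where $\mu^\diamond-[\nu_b]$ lives.

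\textbf{What the paper actually does.} The missing idea is the action of the automorphism group $\tilde J_b=\underline{\mathrm{Aut}}(\E_b)$ on $\Fc(G,\mu)$. This action preserves each Newton stratum, hence preserves $\Fc(G,\mu,b)^a$, but it does \emph{not} preserve $\Fc(G,\mu,b)^{wa}$. The paper first constructs a point $x_G$ (via a delicate combinatorial Lemma using HN-indecomposability) with $\E_{b,x_G}\simeq\E_{b'}$ for a specific $[b']$ whose Newton vector is a \emph{minimal} nonzero element of $\mathcal N(G)$; this $x_G$ is neither admissible nor weakly admissible. Then one acts by a nontrivial $\gamma$ in the top filtration step $\tilde J_b^{\geq\lambda_{\max}}(K)$ of the unipotent part of $\tilde J_b$. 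The translate $\gamma(x_G)$ remains non-admissible. To show it is weakly admissible, one argues by contradiction: a violating reduction of $\E_{b,\gamma(x_G)}$ to some $Q$ transfers (via $\tilde\gamma$) to a reduction of $\E_{b,x_G}$ with the same degree vector, which by minimality of $\nu_{\E_{b'}}$ must be the HN canonical reduction; unwinding, the two induced reductions of $\E_b$ to $Q$ are then equivalent, and an explicit computation in $\mathfrak g_{\bar B}$ using $\gamma\in\tilde J_b^{\geq\lambda_{\max}}\setminus\{1\}$ yields a contradiction. The $\tilde J_b$-action is precisely the device that separates the destabilizing reduction of $\E_{b,x}$ from the isocrystal structure of $b$, which is what your proposal lacks.
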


Indeed, we know that on the flag variety there is a group action of $\tilde{J}_b$. We prove that the group action preserves the admissible locus but not the weakly admissible locus by producing a point which is weakly admissible but not admissible.  Such a point exists in the $\tilde{J}_b$-orbit of a non weakly admissible point.
\bigskip

We briefly describe the structure of this article. In section \ref{section_Kott and curve}, we review the basic facts about the Kottwitz set and $G$-bundles on the Fargues-Fontaine curve. In section \ref{section_a and wa}, we review the reduction of $G$-bundles and introduce the weakly admissible locus and admissible locus  in the flag variety in term of (weakly) semi-stable condition on the modification of $G$-bundles. In section \ref{section_HN-decomposable}, we review the Hodge-Newton-decomposability condition and prove Proposition \ref{Prop_preimage_admissible} which is one of the main ingredients for the proof of the main theorems. In section \ref{section_group action on the modification} we prove Proposition \ref{Prop_not basic} and the main theorems \ref{theo_main} and \ref{theo_main_general} by using the $\tilde{J}_b$-action on the flag variety. In section \ref{section_wa and Newton}, we discuss the relation between the Newton strata and the weakly admissible locus. We introduce a conjecture predicting which Newton strata contain weakly admissible points in the basic case. We prove this conjecture in a very special case in Proposition \ref{prop_conj special case}. The full conjecture is proved by Viehmann in \cite{Vi} very recently.

\textbf{Acknowledgments.}  We would like to thank Laurent Fargues, David Hansen, Xuhua He, Sian Nie, Xu Shen, Jilong Tong, Eva Viehmann, Bingyong Xie for many helpful discussions. Especially we would like to thank Eva Viehmann for her interest on this work and for her comments on the previous version of the preprint.  We thank the referee for careful reading and useful comments. The author is partially supported by NSFC grant No.11671136, No.12071135 and STCSM grant No.18dz2271000.

\section*{Notations}
We use the following notations:
\begin{itemize}
\item $F$ is a finite degree extension of $\Q_p$ with residue field $\F_q$ and a uniformizer $\pi_F$.
\item $\Fbar$ is an algebraic closure of $F$ and $\Gamma = \Gal (\Fbar |F)$.
\item $\breve{F} =\widehat{F^{un}}$ is the completion of the maximal unramified extension with Frobenius $\s$.
\item $G$ is a connected reductive group over $F$ and $H$ is a quasi-split inner form of $G$ equipped with an inner twisting $G_{\breve F}\xrightarrow{\sim} H_{\breve F}$.
\item $A\subseteq T \subseteq B$, where A is a maximal split torus, $T=Z_H (A)$ is the centralizer of $A$ in $T$, and $B$ is a Borel subgroup in $H$.
\item $(X^* (T),\Phi, X_*(T), \Phic )$ is the absolute root datum with positive roots $\Phi^+$ and simple roots $\Delta $ with respect to the choice of $B$.
\item $W=N_H(T)/T$ is the absolute Weyl group of $T$ in $H$, and $w_0$ is the longest length element in $W$.
\item $(X^*(A), \Phi_0,  X_*(A),\Phic_0)$ is the relative root datum with positive roots $\Phi^+_0$ and simple (reduced) roots $\Delta_0$.
\item If $M$ is a standard Levi subgroup in $H$ we denote by $\Phi_M$
 the corresponding roots or coroots showing up in $\Lie\, M$, and by $W_M$ the Weyl group of $M$. If $P$ is the standard parabolic subgroup of $H$ with Levi component $M$, sometimes we also write $W_P$ for $W_M$.
\end{itemize}

\section{Kottwitz set and $G$-bundles on the Fargues-Fontaine curve}\label{section_Kott and curve}
In this section, we will recall the some basic facts about the $G$-bundles on the Fargues-Fontaine curve which will be the main tool for our study of $p$-adic period domains.
\subsection{The Fargues-Fontaine curve}
Let $K$ be a perfectoid field over $\mathbb{F}_q$ with $\omega_K\in K$ satisfying $0<|\omega_K|<1$. Let \[\mathcal{Y}_K=\mathrm{Spa}(W_{\mathcal{O}_F}(\mathcal{O}_K))\backslash V(\pi_F[\omega_K])\] be an adic space over $F$ equipped with an automorphism $\varphi$ induced from the Frobenius $K|\mathbb{F}_q$.  The Fargues-Fontaine curve over $F$ associated to $K$ is the schema
\[X=X_{K}:=\mathrm{Proj}(\bigoplus_{d\geq 0}B_{K}^{\varphi=\pi_F^d}),\] where $B_{K}=H^0(\mathcal{Y}_K, \mathcal{O}_{\mathcal{Y}_K})$. The scheme $X$ is a curve (which means it's a one dimensional noetherian regular scheme) over $F$(\cite{FF} thm 6.5.2, 7.3.3).

If we replace $K$ by an affinoid perfectoid space $S=\mathrm{Spa}(R, R^+)$ over $\mathbb{F}_q$, we can similiarly construct $\mathcal{Y}_S$ and $X_S$ over $F$ which is called the relative Fargues-Fontaine curve (cf. \cite{KL}).

\subsection{$G$-bundles}
From now on, suppose $K=C^\flat$ is the tilt of a complete algebraically closed field $C$ over $F$. Then the curve $X$ is equipped with a closed point $\infty$ with residue field $k(\infty)=C$. Let $\mathrm{Bun}_X$ be the category of vector bundles on $X$. The classification of vector bundles on $X$ is well known due to the work of Fargues-Fontaine.

\begin{theorem}[\cite{FF} theorem 8.2.10]Every vector bundle on $X$ is a direct sum of stable sub-vector bundles, and the isomorphism classes of stable vector bundles on $X$ are parametrized by the slope in $\Q$.
\end{theorem}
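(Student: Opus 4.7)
The plan is to establish the theorem in two stages: first, construct explicit stable bundles $\cO_X(\lambda)$ of slope $\lambda$ for every $\lambda \in \Q$; then, use Harder-Narasimhan theory together with cohomological vanishing to show that any vector bundle decomposes as a direct sum of these.

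For the construction stage, I would start by analysing line bundles. Because $X$ is a Dedekind scheme whose closed points correspond to untilts of $C$, the degree map induces an isomorphism $\mathrm{Pic}(X) \cong \Z$, and I take $\cO_X(n)$ to be any line bundle of degree $n$. For a rational slope $\lambda = d/h$ with $\gcd(d,h) = 1$ and $h \geq 1$, I would use the degree-$h$ finite étale cover $\pi_h : X_h \to X$ obtained by replacing the base field $F$ by its degree-$h$ unramified extension, and set $\cO_X(\lambda) := \pi_{h,*}\cO_{X_h}(d)$. A direct computation gives rank $h$ and degree $d$, hence slope $\lambda$, and stability follows because any destabilising subbundle, after pulling back along $\pi_h$, would contradict the trivial stability of the line bundle $\cO_{X_h}(d)$.

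For the decomposition stage, the Harder-Narasimhan formalism produces for each vector bundle $E$ a canonical filtration $0 \subsetneq E_1 \subsetneq \cdots \subsetneq E_r = E$ whose successive quotients $E_i / E_{i-1}$ are semistable of strictly decreasing rational slopes. The theorem then reduces to two assertions: (i) every semistable vector bundle of slope $\lambda$ is isomorphic to $\cO_X(\lambda)^{\oplus n}$; and (ii) the HN filtration splits. Granting (i), assertion (ii) is immediate from the cohomology vanishing $H^1(X, \Fc) = 0$ when $\Fc$ is a direct sum of $\cO_X(\nu)$'s with $\nu \geq 0$, applied to the internal-hom bundle between graded pieces (which has strictly positive slope because the HN slopes decrease).

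The heart of the proof, and the main obstacle, is assertion (i). Twisting by $\cO_X(-\lambda)$ in a suitable sense reduces to the case $\lambda = 0$: every semistable vector bundle of slope $0$ is trivial. The plan here is to produce enough global sections, using the fundamental exact sequence relating $F$, the ring $B_{K}^{\varphi = \pi_F}$, and the quotient $\BdR / \BpdR$ at the distinguished point $\infty$, to deduce $\dim_F H^0(X, E) \geq \mathrm{rk}\, E$ for any such $E$. A nonzero section yields an injection $\cO_X \hookrightarrow E$; semistability of slope $0$ forces the cokernel to remain semistable of slope $0$, and induction on rank together with the Ext-vanishing above splits the extension. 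Thus the crux is an abundance statement for semistable slope-$0$ bundles, which is where the specific arithmetic of the Fargues-Fontaine curve enters; everything else is formal slope-theoretic machinery built on that input.
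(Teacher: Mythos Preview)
The paper does not give a proof of this statement: it is quoted verbatim as \cite{FF} Theorem 8.2.10 and used as a black box throughout, so there is no argument in the paper to compare your proposal against. Your outline is a faithful high-level sketch of the Fargues--Fontaine proof (construction of $\mathcal{O}(\lambda)$ via pushforward along unramified covers, Harder--Narasimhan reduction to the semistable case, triviality of semistable slope-$0$ bundles, and splitting via $\mathrm{Ext}^1$-vanishing), with the correct identification of the hard step; but since the present paper simply imports the result, there is nothing further to compare.
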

For $\lambda\in\Q$, let $\mathcal{O}(\lambda)$ be a stable vector bundle on $X$ of slope $\lambda\in\Q$.
\

By \cite{FF}, $\{\infty\}=V^+(t)$ with $t\in H^0(X, \mathcal{O}(1))$. Then
\[X\backslash\{\infty\}=\Spec(B_e) \text{ and }\widehat{X_{\infty}}=\Spec(B_{dR}^+)\] where $B_e=B_K[\frac{1}{t}]^{\varphi=1}$ is a principal ideal domain, and $B_{dR}^+$ is a complete discrete valuation ring with residue field $C$. Let $B_{dR}$ be the fraction field of $B_{dR}^+$. The following proposition tells us that a vector bundle on $X$ is determined by its restrictions to $X\backslash\{\infty\}$ and $\widehat{X_{\infty}}$ with a gluing datum.

\begin{proposition}\label{prop_classification_recollement}[\cite{FF} Corollary 5.3.2]Let $\mathcal{C}$ be the category of triples $(M_e, M_{dR}, u)$ where
\begin{itemize}
\item $M_e$ is a free $B_e$-module of finite rank;
\item $M_{dR}$ is a free $B_{dR}^+$-module of finite rank;
\item $u: M_e\otimes_{B_e}B_{dR}\stackrel{\sim}{\longrightarrow}M_{dR}\otimes_{B^+_{dR}}B_{dR}$ is an isomorphism of $B_{dR}$-modules.
\end{itemize} Then there is an equivalence of categories
\[\begin{split}\mathrm{Bun}_X&\stackrel{\sim}{\longrightarrow} \mathcal{C}\\ \E&\mapsto (\Gamma(X\backslash\{\infty\},\E),  \widehat{\E}_{\infty}, \mathrm{Id}).
\end{split}\]
\end{proposition}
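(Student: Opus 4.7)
The statement is a Beauville--Laszlo style gluing theorem for vector bundles along the Cartier divisor $\{\infty\} \subset X$. My plan is to exhibit an explicit quasi-inverse functor $G: \mathcal{C} \to \mathrm{Bun}_X$ and then verify that both composites are naturally isomorphic to the identity.

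First I would check that $F: \E \mapsto (\Gamma(X\setminus\{\infty\}, \E), \widehat{\E}_\infty, \mathrm{Id})$ takes values in $\mathcal{C}$. Since $X\setminus\{\infty\} = \Spec B_e$ is affine, $\Gamma(X\setminus\{\infty\}, \E)$ is a finitely generated projective $B_e$-module, hence free of finite rank because $B_e$ is a PID. The completion $\widehat{\E}_\infty = \E_\infty \otimes_{\mathcal{O}_{X,\infty}} B_{dR}^+$ is free over the complete DVR $B_{dR}^+$. The canonical gluing isomorphism over $\Spec B_{dR}$ is the identity in this presentation.

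To define $G$, given $(M_e, M_{dR}, u)$, I would glue $\widetilde{M}_e$ on $\Spec B_e$ to $\widetilde{M}_{dR}$ on $\Spec B_{dR}^+$ via $u$ after base change to $B_{dR}$. The essential point is that this gluing descends to an honest vector bundle on $X$. Here I would use that $B_{dR}^+$ is the completion of the DVR $\mathcal{O}_{X,\infty}$; since completion is faithfully flat, classical Beauville--Laszlo descent applied on any affine open $V_\infty \ni \infty$ yields an equivalence between vector bundles on $V_\infty$ and triples consisting of a vector bundle on $V_\infty \setminus \{\infty\}$, a free $B_{dR}^+$-module, and a compatible identification over $B_{dR}$. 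Applied to the cover of $X$ by $\Spec B_e$ and a well-chosen $V_\infty$, this is exactly the data encoded by $(M_e, M_{dR}, u)$.

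To conclude I would verify that both composites are the identity. The isomorphism $F \circ G \simeq \mathrm{Id}_{\mathcal{C}}$ is immediate from the construction. For $G \circ F \simeq \mathrm{Id}_{\mathrm{Bun}_X}$, I would invoke the Mayer--Vietoris exact sequence
\[0 \to H^0(X, \E) \to \Gamma(X\setminus\{\infty\}, \E) \oplus \widehat{\E}_\infty \to \widehat{\E}_\infty \otimes_{B_{dR}^+} B_{dR} \to H^1(X, \E) \to 0,\]
which recovers $\E$ from its restrictions. The main obstacle is the Beauville--Laszlo descent step, which must be set up carefully in this non-noetherian setting; it requires faithful flatness of $\mathcal{O}_{X,\infty} \to B_{dR}^+$ and the fact that the glued sheaf is locally free of the expected rank. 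Both are classical inputs in the theory of the Fargues--Fontaine curve, and granting them the remaining verification is formal.
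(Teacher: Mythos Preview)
The paper does not prove this proposition at all: it is stated with the citation [\cite{FF} Corollary 5.3.2] and used as a black box from Fargues--Fontaine's work. So there is no ``paper's own proof'' to compare against.

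Your outline is the standard Beauville--Laszlo argument and is essentially correct in spirit. A couple of remarks. First, your use of the Mayer--Vietoris sequence at the end is not really the right tool for showing $G\circ F\simeq \mathrm{Id}$: that sequence computes the cohomology of $\E$, not $\E$ itself. What you actually want is the canonical map from $\E$ to the glued sheaf $G(F(\E))$, and to check it is an isomorphism locally (on $\Spec B_e$ it is tautological; near $\infty$ it is faithfully flat descent along $\mathcal{O}_{X,\infty}\to B_{dR}^+$). Second, the phrase ``a well-chosen $V_\infty$'' hides the only real content: one must know that $X$ admits an affine open neighborhood of $\infty$ and that Beauville--Laszlo gluing applies there. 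Since $X$ is a one-dimensional noetherian regular scheme and $\{\infty\}$ is a closed point, this is genuinely classical Beauville--Laszlo (the local ring $\mathcal{O}_{X,\infty}$ is a DVR, hence noetherian, and its completion is $B_{dR}^+$), so your worry about the ``non-noetherian setting'' is misplaced at this step. With those two adjustments the argument goes through.
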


Let $\mathrm{Isoc}_{\breve F|F}$ be the category of isocrystals relative to $\breve{F}| F$. By \cite{FF} thm 8.2.10, there is an essentially surjective functor
\[\E(-): \mathrm{Isoc}_{\breve F|F}\rightarrow \mathrm{Bun}_X\] where for any $(D, \varphi)\in \mathrm{Isoc}_{\breve F|F}$, the vector bundle $\E(D,\varphi)$ on $X$ is associated to the graded $\bigoplus_{d\geq 0}B_{K}^{\varphi=\pi_F^d}$-module
\[\bigoplus_{d\geq 0}(D\otimes B_{K})^{\varphi\otimes\varphi=\pi_F^d}.\]In fact, it maps a simple isocrystal of slope $\lambda$ to $\mathcal{O}(-\lambda)$.

To any $b\in G(\breve F)$, we can associate an isocrystal with $G$-structures:
\[\begin{split} \mathcal{F}_b: \Rep G&\lra \mathrm{Isoc}_{\breve F|F}\\ V&\mapsto (V_{\breve F}, b\sigma).\end{split}\] Its isomorphism class only depends on the $\sigma$-conjugacy class $[b]\in B(G)$ of $b$, where $B(G)$ is the set of $\sigma$-conjugacy classes in $G(\breve F)$. In this way, $B(G)$ parametrises the set of isomorphism classes of $F$-isocrystals with $G$-structure, cf. \cite{RR} Remarks 3.4 (i).

Recall that a $G$-bundle on $X$ is a $G$-torsor on $X$ which is locally trivial for the \'etale topology. Equivalently, a $G$-bundle on $X$ can also be viewed as an exact functor $\mathrm{Rep} G\rightarrow \mathrm{Bun}_X$ where $\mathrm{Rep} G$ is the category of rational algebraic representations of $G$. The \'etale cohomology set $H^1_{\text{\'{e}t}}(X, G)$ classifies the isomorphism classes of $G$-bundles on $X$.

For $b\in G(\breve F)$, we can associate to $b$ a $G$-bundle on $X$.

\[\E_b=\E(-)\circ\mathcal{F}_b: \Rep G\stackrel{\mathcal{F}_b}{\longrightarrow} \mathrm{Isoc}_{\breve F|F}\stackrel{\E(-)}{\longrightarrow}\mathrm{Bun}_X.\] By \cite{F3} theorem 5.1, there is a bijection of sets
	\[\begin{split} B(G)&\st{\sim}{\longrightarrow} H^1_{\text{\'{e}t}}(X, G) \\
	[b]&\mapsto [\E_b]. \end{split}	\]

 In this way, the set $B(G)$ also classifies $G$-bundles on $X$.

\subsection{Kottwitz set}
There are two invariants on the set $B(G)$, the Newton map and the Kottwitz map. For any $b\in G(\breve F)$, there is a composed functor
\[\mathcal{F}:  \mathrm{Rep} G\stackrel{\mathcal{F}_b}{\longrightarrow}  \mathrm{Isoc}_{\breve F| F}\longrightarrow \mathbb{Q}-\mathrm{grVect}_{\breve F}\]
 where $\mathbb{Q}-\mathrm{grVect}_{\breve F}$ is the category of $\mathbb{Q}$-graded vector spaces over $\breve F$ and the second functor is given by the Dieudonn\'e-Manin's classification of isocrystals which decomposes an isocrystal into isocline sub-isocrystals parametrized by $\Q$.  We can attach to $\mathcal{F}$ a slope morphism
\[\nu_b: \mathbb{D}_{\breve F}=\mathrm{Aut}^{\otimes}(\omega)\rightarrow \mathrm{Aut}^{\otimes}(\omega\circ \mathcal{F})=G_{\breve F},\] where $\omega: \mathbb{Q}-\mathrm{grVect}_{\breve F}\rightarrow \mathrm{Vect}_{\breve F}$ is the natural forgetful functor and  $\mathbb{D}$ is the pro-algebraic torus over $\breve F$ with $X^*(\mathbb{D})=\mathbb{Q}$.

The conjugacy class of the slope morphism $\nu_b$ is defined over $F$ and it only depends on the $\sigma$-conjugacy class of $b$. We thus obtain the Newton map \[\begin{split} \nu: B(G)&\longrightarrow \mathcal{N}(G)\\ [b]&\mapsto [\nu_b], \end{split}\]
where $\mathcal{N}(G)=\mathcal{N} (H)=X_* (A)_\Q^+$ is
 the Newton chamber. The $\sigma$-conjugacy class $[b]\in B(G)$ is called basic if $\nu_b$ is central. Denote by $B(G)_{basic}$ the subset of basic $\sigma$-conjugacy classes in $B(G)$.

The other invariant is the Kottwitz map (\cite{Kot2} 4.9, 7.5, \cite{RR} 1.15):
\[\begin{split} \kappa_G: B(G)&\longrightarrow \pi_1(G)_{\Gamma}\\ [b]&\mapsto \kappa_G([b]),\end{split}\]
where $\pi_1(G)=\pi_1(H)=X_*(T)/\langle\Phi^\vee\rangle$ is the algebraic fundamental group of $G$, and $\pi_1(G)_{\Gamma}$ is the Galois coinvariants.  For $G=\mathrm{GL}_n$, $\kappa_{\mathrm{GL}_n}([b])=v_p(\mathrm{det}(b))\in\Z=\pi_1(\mathrm{GL}_n)_{\Gamma}$ where $v_p$ denotes the $p$-adic valuation on $\breve F$. For general $G$, the Kottwitz map is characterized by the unique natural transformation $\kappa_{(-)}: B(-)\rightarrow \pi_1(-)_{\Gamma}$ of set valued functors on the category of connected reductive groups over $F$ such that $\kappa_{\mathrm{GL}_n}$ is defined as above.
Let $B(G)_{basic}$ be the subset of basic elements in $B(G)$, then the restriction of $\kappa_G$ on $B(G)_{basic}$ induces a bijection
\[\kappa_G: B(G)_{basic}\stackrel{\sim}{\longrightarrow}\pi_1(G)_{\Gamma}.\]
Moreover if $[b]\in B(G)_{basic}$ with $\kappa_G([b])=\mu^{\sharp}\in \pi_1(G)_{\Gamma}$ with $\mu\in X_*(T)^+$, then $[\nu_b]=\mathrm{Av}_{\Gamma}\mathrm{Av}_{W}\mu$ where $\mathrm{Av}_{W}$ (resp. $\mathrm{Av}_{\Gamma}$) denotes the $W$-average (resp. $\Gamma$-average).

The elements in $B(G)$ are determined by the Newton map and Kottwitz map. Namely,  the map
\begin{eqnarray}\label{eqn_invariants Kottwitz}(\nu, \kappa_G): B(G)\longrightarrow \mathcal{N}(G)\times \pi_1(G)_{\Gamma} \end{eqnarray}is injective (\cite{Kot2} 4.13).

\begin{definition}\begin{enumerate}
\item Let $[b]\in B(G)$ and $\mu\in X_*(T)^+$, we define the Kottwitz sets
\[\begin{split}B(G, \mu)&:=\{[b]\in B(G)| [\nu_b]\leq \mu^{\diamond}, \kappa_G(b)=\mu^{\sharp}\}\\
 A(G, \mu)&:=\{[b]\in B(G)|[\nu_b]\leq \mu^{\diamond}\}\end{split}\]
which are  finite subsets in $B(G)$, where \begin{itemize}
\item $\mu^{\diamond}:=\mathrm{Av}_{\Gamma}(\mu)\in \mathcal{N}(G)$ is the Galois average of $\mu$,
\item $\mu^{\sharp}\in \pi_1(G)_{\Gamma}$ is the image of $\mu$ via the natural quotient map $X_{*}(T)\rightarrow \pi_1(G)_{\Gamma}$,
\item the order $\leq$ on $\mathcal{N}(G)$ is the usual order: $\nu_1\leq \nu_2$ if and only if $\nu_2-\nu_1 \in \Q_{\geq 0}\Phi_0^+$.
 \end{itemize}
\item We define a partial order on $A(G, \mu)$: For $[b_1], [b_2]\in A(G, \mu)$, we say $[b_1]\leq [b_2]$ if $[\nu_{b_1}]\leq [\nu_{b_2}]$.
\end{enumerate}

\end{definition}

We will also need the following generalized Kottwitz set defined in \cite{CFS}.
\begin{definition}
For $\e\in \pi_1 (G)_\Gamma$ and $\delta\in X_* (A)_\Q^+$ we set
$$
B(G,\e,\delta   ) = \{ [b]\in B(G)\ |\ \kappa_G (b)=\e \text{ and } [\nu_b]\leq \delta\}.
$$
\end{definition}
\begin{remark}\label{AGmu}\begin{enumerate}\item We following the notations in \cite{CFS}. For the generalized Kottwitz set $B(G, \e, \delta)$, the term $\e$ is written in an additive way while the term $\delta$ is written in a multiplicative way.
\item
\[\begin{split}B(G, \mu)&=B(G, \mu^{\sharp}, \mu^{\diamond})\\
A(G, \mu)&=\coprod_{\e\in\pi_1(G)_{\Gamma, tor}}B(G, \mu^{\sharp}+\e, \mu^{\diamond}),\end{split}\]
where $\pi_1(G)_{\Gamma, tor}$ denotes the subgroup of torsion elements in $\pi_1(G)_{\Gamma}$.
\end{enumerate}
\end{remark}

\begin{definition}Suppose $G$ is quasi-split. Let $M$ be a standard Levi subgroup of $G$. We define a partial order $\preceq_M$ in $\pi_1(M)_{\Gamma}$ and in $\pi_1(M)_{\Gamma,\Q}$ as follows: for $y_1, y_2\in \pi_1(M)_{\Gamma}$ (resp. $\pi_1(M)_{\Gamma,\Q}$), we write $y_1\preceq_M y_2$ if and only $y_2-y_1$ is a non-negative integral linear combination of images in $\pi_1(M)_{\Gamma}$ (resp. $\pi_1(M)_{\Gamma,\Q}$) of coroots corresponding to the simple roots of $T$ in $N$ with $N$ the unipotent radical of the standard parabolic subgroup of $G$ with Levi component $M$.
\end{definition}

We have the following characterization of the generalized Kottwitz set.

\begin{lemma}\label{lemma_BGmu}Suppose $G$ is quasi-split. Let $M$ be a standard Levi subgroup of $G$.  Let $b\in M(\breve F)$ such that $[b]_M\in B(M)$ is basic. Then
$[b]\in B(G, \e, \delta)$ if and only if \[\kappa_G (b)=\e \text{ and }\kappa_M(b)\preceq_M \delta^{\sharp} \text{ in }\pi_1(M)_{\Gamma, \Q},\] where $\delta^\sharp$ denotes for the image of $\delta$ via the natural map $X_*(A)_{\Q}\rightarrow \pi_1(M)_{\Gamma,\Q}$ by abuse of notation.

Moreover, for elements in the Kottwitz set $B(G, \mu)$, we can have an simpler characterization: $[b]\in B(G, \mu)$ if and only if $\kappa_M(b)\preceq_M \mu^{\sharp}$ in $\pi_1(M)_{\Gamma}$ where $\mu^{\sharp}$ denotes the image of $\mu$ in $\pi_1(M)_{\Gamma}$ by abuse of notation.
\end{lemma}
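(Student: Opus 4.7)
The plan is to translate the Newton inequality $[\nu_b]\leq \delta$ in $\mathcal{N}(G)=X_*(A)_\Q^+$ into the partial-order inequality $\kappa_M(b)\preceq_M \delta^\sharp$ in $\pi_1(M)_{\Gamma,\Q}$ by means of the natural projection $X_*(A)_\Q\twoheadrightarrow \pi_1(M)_{\Gamma,\Q}$, whose kernel is $\langle\Phi_{M,0}^\vee\rangle_\Q$. The key preliminary input is that since $[b]_M\in B(M)$ is basic, the slope morphism $\nu_b$ factors through $Z(M)^\circ$, and under the canonical identification $X_*(Z(M)^\circ)_\Q^\Gamma \cong \pi_1(M)_{\Gamma,\Q}$ coming from the Kottwitz isomorphism on $B(M)_{basic}$ (tensored with $\Q$), the element $\nu_b$ is sent to $\kappa_M(b)$. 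Thus the projection carries $\nu_b\mapsto \kappa_M(b)$ and $\delta\mapsto \delta^\sharp$; the simple coroots indexed by $\Delta_{0,M}$ die while those indexed by $\Delta_0\setminus\Delta_{0,M}$ survive.

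For $(\Rightarrow)$, the condition $\kappa_G(b)=\e$ passes across trivially. Given $[\nu_b]\leq \delta$, I use the standard fact that $[\nu_b]-\nu_b$ is a non-negative $\Q$-combination of positive coroots (the $G$-dominant representative is the maximum in its Weyl orbit under the usual order), so $\delta-\nu_b\in \Q_{\geq 0}\{\alpha^\vee:\alpha\in\Phi_0^+\}$. Projecting to $\pi_1(M)_{\Gamma,\Q}$ kills the coroots in $M$ and yields $\delta^\sharp-\kappa_M(b)\in \Q_{\geq 0}\{\alpha^\sharp:\alpha\in\Delta_0\setminus\Delta_{0,M}\}$, which is precisely $\kappa_M(b)\preceq_M \delta^\sharp$.

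For $(\Leftarrow)$, I lift the relation in $\pi_1(M)_{\Gamma,\Q}$: writing $\delta^\sharp-\kappa_M(b)=\sum_{\alpha\in\Delta_0\setminus\Delta_{0,M}}c_\alpha\alpha^\sharp$ with $c_\alpha\geq 0$, set $v:=\delta-\sum c_\alpha\alpha^\vee\in X_*(A)_\Q$. Then $v^\sharp=\kappa_M(b)=\nu_b^\sharp$, so $v-\nu_b\in \langle\Phi_{M,0}^\vee\rangle_\Q$. Using $G$-dominance of $\delta$ together with the non-positivity of $\langle\beta,\alpha^\vee\rangle$ for distinct simple roots $\beta\in\Delta_{0,M}$ and $\alpha\in\Delta_0\setminus\Delta_{0,M}$, the element $v$ is $M$-dominant; since $\nu_b$ is $M$-central, the difference $v-\nu_b$ lies in $\langle\Phi_{M,0}^\vee\rangle_\Q$ and pairs non-negatively with every simple root of $M$. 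Invoking the non-negativity of the entries of the inverse Cartan matrix of $M$ then forces $v-\nu_b$ to be a non-negative $\Q$-combination of simple coroots of $M$. Adding $\sum c_\alpha\alpha^\vee$ back produces $\delta-\nu_b\in \Q_{\geq 0}\{\alpha^\vee:\alpha\in\Phi_0^+\}$, whence $[\nu_b]\leq \delta$ in $\mathcal{N}(G)$ and hence $[b]\in B(G,\e,\delta)$.

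For the \emph{moreover} part, I apply the first part with $\e=\mu^\sharp$ and $\delta=\mu^\diamond$; since $\Gamma$ acts trivially on the coinvariants $\pi_1(M)_\Gamma$, one has $(\mu^\diamond)^\sharp=\mu^\sharp$ in $\pi_1(M)_\Gamma$, so the condition reduces to $\kappa_G(b)=\mu^\sharp$ and $\kappa_M(b)\preceq_M\mu^\sharp$ in $\pi_1(M)_{\Gamma,\Q}$. To upgrade to the integral statement in $\pi_1(M)_\Gamma$, observe that $\mu^\sharp-\kappa_M(b)$ lies in the kernel of $\pi_1(M)_\Gamma\to\pi_1(G)_\Gamma$, which modulo torsion is freely generated by $\{\alpha^\sharp:\alpha\in\Delta_0\setminus\Delta_{0,M}\}$; the non-negativity of the coefficients already secured then forces them to be non-negative integers. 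The main obstacle is the Cartan-positivity step in $(\Leftarrow)$: proving that any element of $\langle\Phi_{M,0}^\vee\rangle_\Q$ which pairs non-negatively with every simple root of $M$ is necessarily a non-negative $\Q$-combination of simple coroots of $M$, which ultimately rests on the classical fact that the inverse of a Cartan matrix has non-negative entries.
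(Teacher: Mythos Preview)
Your argument is correct. For necessity and for the ``moreover'' clause you do essentially what the paper does (the paper just says the first is obvious and the second is similar). For sufficiency $(\Leftarrow)$ you take a genuinely different route. The paper's proof is a one-liner via averaging operators: choosing a lift $\widetilde{\kappa_M(b)}\in X_*(T)$ of $\kappa_M(b)$, it writes
\[
[\nu_b]=\mathrm{Av}_{\Gamma}\mathrm{Av}_{W_M}\bigl(\widetilde{\kappa_M(b)}\bigr)\;\leq\;\mathrm{Av}_{\Gamma}\mathrm{Av}_{W_M}(\delta)\;\leq\;\delta,
\]
the middle inequality because both $\mathrm{Av}_{\Gamma}$ and $\mathrm{Av}_{W_M}$ preserve the dominance order (for $\mathrm{Av}_{W_M}$ this comes down to the fact that $w\alpha^\vee-\alpha^\vee\in\sum_{\beta\in\Delta_M}\Z_{\geq 0}\,\beta^\vee$ for every $w\in W_M$ and every simple $\alpha\notin\Delta_M$), and the last because $\delta$ is $G$-dominant and $\Gamma$-fixed. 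Your path---building the explicit lift $v=\delta-\sum c_\alpha\alpha^\vee$, checking it is $M$-dominant, and then invoking positivity of the inverse Cartan matrix of $M$---reaches the same conclusion by hand; it is more transparent about where the positivity lives but is longer, whereas the averaging trick swallows the Cartan step into the order-preservation of $\mathrm{Av}_{W_M}$.

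One shared tacit point worth flagging: both your proof and the paper's identify the central slope $\nu_b\in X_*(Z(M)^\circ)_\Q^\Gamma$ with the $G$-dominant Newton point $[\nu_b]$. This is legitimate once the representative $b$ is chosen in its $G$-$\sigma$-conjugacy class so that $\nu_b$ is already $G$-dominant (equivalently $M\subseteq M_b$), which is how the lemma is used throughout the paper; but without that convention your final clause ``$\delta-\nu_b\geq 0$, whence $[\nu_b]\leq\delta$'' would not follow, since in general only $\nu_b\leq[\nu_b]$ holds.
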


\begin{proof}The proof is the same as \cite{Kot3} Proposition 4.10. We repeat the proof for generalized Kottwitz set here, while the proof for $B(G, \mu)$ is similar. The necessity is obvious. For the sufficiency, the inequality $\kappa_M(b)\preceq_M \delta^{\sharp}$ in $\pi_1(M)_{\Gamma, \Q}$ implies
\[[\nu_b]=\mathrm{Av}_{\Gamma}\mathrm{Av}_{W_M}(\widetilde{\kappa_M(b)})\leq\mathrm{Av}_{\Gamma}\mathrm{Av}_{W_M}\delta\leq \delta,\]where
$\widetilde{\kappa_M(b)}$ denotes a preimage of $\kappa_M(b)$ via the natural map $X_*(T)\rightarrow \pi_1(M)_{\Gamma}$ and the second inequality is due to the fact that $\delta$ is dominant.
\end{proof}

The following lemma will be used in the proof of the main theorem.

\begin{lemma}\label{lemma_BGmu reduction Levi}Suppose $G$ is quasi-split and $\mu\in X_*(T)^+$ is minuscule. Let $M$ be a standard Levi subgroup of $G$ and $b\in M(\breve F)$.
\begin{enumerate}
\item  Then the natural map $\pi_1(M)_{\Gamma, tor}\rightarrow \pi_1(G)_{\Gamma, tor}$ is injective.
\item  Suppose $[b]\in B(G, \mu)$, then there exists $\mu'\in X_*(T)$ such that $[b]_M\in B(M, \mu')$ and $\mu'$ is conjugate to $\mu$ in $G$.
\item Suppose $[b]\in B(G, \mu^{\sharp}+\e, \mu^{\diamond})$ with $\e\in \pi_1(G)_{\Gamma, tor}$, then \[\e\in \mathrm{Im}(\pi_1(M)_{\Gamma, tor}\rightarrow \pi_1(G)_{\Gamma, tor})\] and there exists $w\in W$ such that $[b]\in B(M, (w\mu)^{\sharp, M}+\e, (w\mu)^{\diamond, M})$, here we view $\e$ as an element in $\pi_1(M)_{\Gamma, tor}$.
\end{enumerate}

\end{lemma}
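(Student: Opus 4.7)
For Part (1), the strategy is to show that the kernel of $\pi_1(M)_\Gamma \to \pi_1(G)_\Gamma$ is torsion-free, from which injectivity on torsion parts is immediate. Let $I \subseteq \Delta$ denote the set of simple roots of $M$, and set $K = \langle \Phi^\vee \rangle / \langle \Phi_M^\vee \rangle$. Since coroot lattices are free abelian on simple coroots, $K$ is free abelian with basis $\{\bar\alpha^\vee : \alpha \in \Delta \setminus I\}$. Because $G$ is quasi-split and $M$ is standard, $\Gamma$ permutes $\Delta$ preserving $I$, and hence acts on this basis of $K$ by permutation; thus $K_\Gamma$ is free abelian, indexed by $\Gamma$-orbits in $\Delta \setminus I$. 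Tensoring $0 \to K \to \pi_1(M) \to \pi_1(G) \to 0$ with $\Q$ gives a sequence of $\Q[\Gamma]$-modules that splits by semisimplicity (Maschke). Taking $\Gamma$-coinvariants produces an injection $K_{\Gamma,\Q} \hookrightarrow \pi_1(M)_{\Gamma,\Q}$; combining with $K_\Gamma \hookrightarrow K_{\Gamma,\Q}$ (valid since $K_\Gamma$ is torsion-free), we obtain $K_\Gamma \hookrightarrow \pi_1(M)_\Gamma$. By right exactness of coinvariants, the kernel of $\pi_1(M)_\Gamma \to \pi_1(G)_\Gamma$ equals the image of $K_\Gamma$, which is now identified with the torsion-free group $K_\Gamma$. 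Intersection with $\pi_1(M)_{\Gamma, tor}$ is zero, yielding (1).

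For Part (2), the idea is to pick a $W$-conjugate $\mu' \in W\mu$ matching $\kappa_M(b)$. The crucial input from minusculeness is that the image of the orbit $W\mu$ under $X_*(T) \to \pi_1(M) \to \pi_1(M)_\Gamma$ realizes exactly the set of Kottwitz classes arising from $b \in M(\breve F)$ with $[b] \in B(G,\mu)$, and for each such class there is a unique $M$-dominant representative in $W\mu$. I would take $\mu'$ to be the $M$-dominant representative with $(\mu')^\sharp = \kappa_M(b)$. The Kottwitz condition defining $B(M,\mu')$ then holds by construction. For the Newton bound $[\nu_b]_M \leq (\mu')^\diamond$ in $\mathcal{N}(M)$, one transfers the given inequality $[\nu_b] \leq \mu^\diamond$ in $\mathcal{N}(G)$, using the $M$-dominance of $\mu'$ together with the $W$-conjugacy $\mu' \in W\mu$ to move the inequality from the $G$-chamber into the $M$-chamber.

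For Part (3), the argument parallels Part (2) with a torsion twist. Starting from $\kappa_G(b) = \mu^\sharp + \epsilon$, I would find $w \in W$ such that $\kappa_M(b) - (w\mu)^{\sharp, M}$ lies in $\pi_1(M)_{\Gamma, tor}$; call this difference $\tilde\epsilon$. The existence of such a $w$ again relies on the combinatorics of the minuscule orbit: the image of $W\mu$ in the torsion-free quotient $\pi_1(M)_\Gamma / \pi_1(M)_{\Gamma, tor}$ covers the image of $\kappa_M(b)$ there. Since $(w\mu)^{\sharp, M}$ maps to $\mu^\sharp$ in $\pi_1(G)_\Gamma$, the image of $\tilde\epsilon$ must equal $\epsilon$, proving $\epsilon \in \mathrm{Im}(\pi_1(M)_{\Gamma, tor} \to \pi_1(G)_{\Gamma, tor})$; the identification of $\tilde\epsilon$ with the distinguished lift of $\epsilon$ uses Part (1). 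The remaining verification $[b] \in B(M, (w\mu)^{\sharp, M} + \epsilon, (w\mu)^{\diamond, M})$ then follows as in Part (2), by transferring the Newton bound via the $W$-conjugacy.

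The main technical obstacle, common to Parts (2) and (3), is the structural description of the image of the minuscule $W$-orbit inside $\pi_1(M)_\Gamma$: namely, that it realizes every class compatible with the Kottwitz and Newton constraints in $G$. Verifying this requires careful analysis of the $W_M \backslash W / W_\mu$ double cosets modulo $\Gamma$, and is precisely where the minusculeness of $\mu$ is used in an essential way. Part (1), by contrast, is a clean consequence of semisimplicity of $\Q[\Gamma]$-modules combined with the explicit description of the coroot lattice.
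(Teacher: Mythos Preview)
Your argument for Part (1) is correct and in fact more direct than the paper's, which invokes the identification of $\pi_1(M)_{\Gamma,\mathrm{tor}} \to \pi_1(G)_{\Gamma,\mathrm{tor}}$ with the map $H^1(F,M) \to H^1(F,G)$ and cites Serre for its injectivity. Your route---showing the kernel $K_\Gamma$ is torsion-free via the permutation structure on $K$---is essentially what the paper establishes separately as a Claim inside its proof of Part (2) (there attributed to the proof of \cite{CFS} Lemma 4.11), so you have effectively front-loaded that ingredient.

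For Parts (2) and (3), however, you have correctly identified the obstacle but not overcome it. The assertion that the image of $W\mu$ in $\pi_1(M)_\Gamma$ hits $\kappa_M(b)$ (or, in (3), its class modulo torsion) is precisely the content of the lemma, and your proposed ``careful analysis of $W_M\backslash W/W_\mu$ double cosets modulo $\Gamma$'' is not carried out. The difficulty is genuine: even after reducing to $[b]_M$ basic and reformulating via Lemma~\ref{lemma_BGmu} as a statement about the partial order $\preceq_M$ on $\pi_1(M)_\Gamma$, one must show that whenever $\kappa_M(b) \preceq_M \mu^\sharp$ there is some $\mu'\in W\mu$ with $(\mu')^{\sharp,M} = \kappa_M(b)$. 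The paper does not attack this combinatorially. Instead it reduces to $G$ adjoint and simple, observes that the question depends only on the root datum together with the $\Gamma$-orbits on $\Delta$, then constructs an \emph{unramified} form $\tilde G$ of $G$ with the same Tits index, and transports the problem to $\tilde G$ where it is already known by \cite{RV} Lemma 8.1(2). This reduction-to-unramified device is the missing idea in your sketch. Part (3) in the paper is then deduced from Part (2) by a torsion-shift argument close to what you outline: one chooses $\e'\in\pi_1(M)_{\Gamma,\mathrm{tor}}$ so that $\kappa_M(b)-\e'\preceq_M\mu^{\sharp,M}$ holds integrally, applies (2) to the basic $[b']$ with $\kappa_M(b')=\kappa_M(b)-\e'$, and then checks $\e'=\e$.
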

\begin{proof}For (1), the map $\pi_1(M)_{\Gamma, tor}\rightarrow  \pi_1(G)_{\Gamma, tor}$ can be identified with the map $H^1(F, M)\rightarrow H^1(F, G)$ which is injective(cf. \cite{SerreCohoGal} Exercise 1 in III $\S 2.1$).

The part (2) of the lemma is proved by \cite{RV} Lemma 8.1 (2) when $G$ is unramified. For the general case, we want to reduce to the unramified case. Without loss of generality, we may assume that $G$ is adjoint and simple by \cite{Kot2} 6.5. Moreover, after replacing $M$ by a smaller Levi subgroup, we may assume that $[b]_M\in B(M)_{basic}$. Then by Lemma \ref{lemma_BGmu},
\[[b]\in B(G,\mu)\Leftrightarrow \kappa_M(b)\preceq_M \mu^{\sharp} \text{ in } \pi_1(M)_{\Gamma}.\]We want to show that there exists $\mu'\in X_*(T)$ which is conjugate to $\mu$ such that $\kappa_M(b)=(\mu')^{\sharp}$ in $\pi_1(M)_{\Gamma}$.

Let $Q:=\mathrm{Ker}(\pi_1(M)\rightarrow \pi_1(G))$.

\noindent{\it Claim: $Q_{\Gamma}=\mathrm{Ker}(\pi_1(M)_{\Gamma}\rightarrow \pi_1(G)_{\Gamma})$.}

Let $A:=\mathrm{Ker}(\pi_1(M)_{\Gamma}\rightarrow \pi_1(G)_{\Gamma})$ which is torsion free by the proof of \cite{CFS} Lemma 4.11. As the functor $(-)_{\Gamma}$ is right exact, there exists a natural surjection $Q_{\Gamma}\rightarrow A$. We need to show it's injective. Therefore it suffices to show $\mathrm{rank}_{\Q}A_{\Q}=\mathrm{rank}_{\Q}Q_{\Gamma,\Q}$. Since the functor $(-)_{\Gamma,\Q}$ is canonically isomorphic to the functor $(-)^{\Gamma}_{\Q}$, we have
\[A_{\Q}\simeq \mathrm{Ker}(\pi_1(M)^{\Gamma}_{\Q}\rightarrow \pi_1(G)^{\Gamma}_{\Q})=Q^{\Gamma}_{\Q}.\] The Claim follows.

By the Claim, $\mu-\kappa_M(b)\in Q_{\Gamma}$. We need to show there exists $\mu'\in X_*(T)$ which is conjugate to $\mu$ such that
\[ \mu-\kappa_M(b)=\mu-\mu' \text{ in } Q_{\Gamma}.\] It's a question only about root system with Galois action. Indeed, by the classification of $k$-forms of $G$, we can construct an unramified group $\tilde G$ over $F$ which is a form of $G$ and they both have same Tits-indices. More precisely, we can find $\tilde T\subset \tilde B\subset \tilde G$  over $F$ where $\tilde{T}$ is a maximal torus and $\tilde B$ is a Borel subgroup such that
\begin{itemize}
\item $X^*(T)\simeq X^*(\tilde T)$ and via this identification $\Delta_G=\Delta_{\tilde G}$;
\item $\Delta_{G}$ and $\Delta_{\tilde G}$ have the same Galois orbits.
\end{itemize}
Then the absolute Weyl group of $(G, T)$ and $(\tilde G, \tilde T)$ are isomorphic. Let $\tilde M$ be the standard Levi subgroup of $\tilde G$ such that $\Delta_{\tilde M}=\Delta_M$. The isomorphism between the character groups induces an identification $\pi_1(\tilde M)_{\Gamma}=\pi_1(M)_{\Gamma}$. Let $\tilde\mu\in X_*(\tilde T)$ be the cocharacter corresponding to $\mu$ via the idendtification $X_*(T)\simeq X_*(\tilde T)$.  Let $[\tilde{b}]\in B(\tilde M)_{basic}$ such that $\kappa_{\tilde M}(\tilde b)\in \pi_1(\tilde M)_{\Gamma}$ maps to $\kappa_M(b)\in \pi_1(M)_{\Gamma}$ via the identication $\pi_1(\tilde M)_{\Gamma}=\pi_1(M)_{\Gamma}$. As $\tilde G$ is unramified, we can find $\tilde \mu'$ for $[\tilde b]\in B(\tilde G, \tilde \mu)$. Then $\mu'$ is the cocharcter of $G$ corresponding to $\tilde\mu'$.

For (3), as before, we may assume $[b]\in B(M)_{basic}$ after replacing $M$ by a smaller group. Then $[\nu_b]\leq \mu^{\diamond}$ implies that
\[\kappa_M(b)\preceq_M \mu^{\sharp, M} \text{ in } \pi_1(M)_{\Gamma, \Q}.\]Hence there exists $\e'\in \pi_1(M)_{\Gamma, tor}$ such that
\[\kappa_M(b)-\e'\preceq_M \mu^{\sharp, M}\text{ in } \pi_1(M)_{\Gamma}.\] Let $[b']\in B(M)_{basic}$ such that $\kappa_M(b')=\kappa_M(b)-\e'$. Then $[b']\in B(G, \mu)$. By (2), there exists $w\in W$ such that $[b']\in B(M, w\mu)$. Therefore  $[\nu_b]_M=[\nu_{b'}]_M\leq (w\mu)^{\diamond, M}$ and \[\mu^{\sharp}=\kappa_G(b')=\kappa_{G}(b)-\e'=\mu^{\sharp}+\e-\e' \text{ in } \pi_1(G)_{\Gamma},\]where $\e'$ is considered to be an element in $\pi_1(G)_{\Gamma, tor}$ via the natural map in (1). Hence $\e=\e'$ and
\[\kappa_M(b)=\kappa_M(b')+\e =(w\mu)^{\sharp}+\e \text{ in } \pi_1(M)_{\Gamma}.\]
\end{proof}


\subsection{Classification of $G$-bundles in terms of $\varphi$-modules over $\bar{B}$}
  Let \[\begin{split}B^{b, +}&:= W_{\mathcal{O}_F}(\mathcal{O}_K)[\frac{1}{\pi_F}],\\ \bar{B}&:=(B^{b, +}/[\omega_K])_{red}. \end{split}\] Here  $\bar B$ is a local $F$-algebra with residue field $W_{\mathcal{O}_F}(k_K)_{\Q}$.

 The Frobenius on $\mathcal{O}_K$ induces an automorphism $\varphi$ on $B^{b, +}$ and on $\bar B$.

Let $\varphi-\Mod_{\bar B}$ (resp. $\varphi-\Mod_{W_{\mathcal{O}_F}(k_K)_{\Q}}$) be the category of free $\bar{B}$-modules (resp. $W_{\mathcal{O}_F}(k_K)_{\Q}$-vector spaces) of finite rank equipped with a semi-linear isomorphism.

\begin{theorem}[\cite{FF} Theorem 11.1.7 and 11.1.9]\label{theorem_classification_barB-modules} There is an equivalence of additive tensor categories:
\[\mathrm{Bun}_X\stackrel{\sim}{\longrightarrow} \varphi-\Mod_{\bar B}.\]
\end{theorem}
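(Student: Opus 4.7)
The plan is to construct a functor in one direction and then check equivalence by reducing to the classification of both sides. More specifically, the idea is to build a Frobenius-equivariant ``realization'' functor $\Psi \colon \mathrm{Bun}_X \to \varphi\text{-}\mathrm{Mod}_{\bar B}$, then verify it is an equivalence of additive tensor categories by comparing stable (respectively simple) objects. A natural candidate for $\Psi$ uses the inclusion $B^{b,+}\hookrightarrow B_K$: given $\mathcal{E}\in\mathrm{Bun}_X$, one extracts a graded module from the sections $\bigoplus_{d\ge 0} H^0(X,\mathcal{E}(d))$ viewed inside $\bigoplus_d B_K^{\varphi=\pi_F^d}$, and then forms the base change to $B^{b,+}[1/[\omega_K]^{-1}]$-style coefficients before reducing modulo $[\omega_K]$ and taking the reduced quotient. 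The Frobenius on $B^{b,+}$ descends to $\bar B$ and induces the $\varphi$-structure on the resulting $\bar B$-module.

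First I would check that $\Psi$ is well-defined (lands in the category of free $\bar B$-modules of finite rank with semilinear isomorphism), is additive, is compatible with tensor products and duals, and preserves ranks. Then, using the classification theorem of Fargues--Fontaine (every vector bundle on $X$ is a direct sum of stable bundles $\mathcal{O}(\lambda)$ parametrized by $\lambda\in\mathbb Q$), the problem reduces to understanding how $\Psi$ behaves on each $\mathcal{O}(\lambda)$. One computes $\Psi(\mathcal{O}(\lambda))$ explicitly and shows it is the ``standard'' slope-$\lambda$ $\varphi$-module $M_\lambda$ over $\bar B$ obtained by base change from the simple slope-$\lambda$ isocrystal over $W_{\mathcal{O}_F}(k_K)_{\mathbb Q}$, using that the residue field $W_{\mathcal{O}_F}(k_K)_{\mathbb Q}$ is algebraically closed as an isocrystal-coefficient field (so Dieudonn\'e--Manin applies).

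For fully faithfulness, I would reduce to computing $\mathrm{Hom}_X(\mathcal{O}(\lambda),\mathcal{O}(\mu))$ versus $\mathrm{Hom}_\varphi(M_\lambda,M_\mu)$; via duals and tensor products this reduces to checking that $H^0(X,\mathcal{O}(\lambda))$ matches the $\varphi=1$ sublocus of $M_\lambda$, which in turn reduces to the identity $B_K^{\varphi=\pi_F^d}=B_e$-piece versus the same invariants inside $\bar B$. For essential surjectivity, given a $\varphi$-module $N$ over $\bar B$, one uses the residue field reduction $N\otimes_{\bar B}W_{\mathcal{O}_F}(k_K)_{\mathbb Q}$ together with Dieudonn\'e--Manin to find candidate slopes $\lambda_1,\dots,\lambda_n$, then lifts a slope decomposition from the residue field back to $\bar B$ by a Hensel-style deformation argument using that the Frobenius pairing on the various slope pieces is non-degenerate.

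The main obstacle I expect is the lifting step in essential surjectivity: $\bar B$ is a reduced local ring but not a field, so one cannot simply invoke Dieudonn\'e--Manin over $\bar B$, and one must instead prove an analogue of the Dieudonn\'e--Manin classification over $\bar B$ by descending from its residue field. This requires a careful analysis of $\varphi$-equivariant extensions between slope pieces over $\bar B$ (showing vanishing of the relevant $\mathrm{Ext}^1$-groups when the slopes are separated), which is the technical heart of [FF] Chapter 11. Once this splitting is available, matching of slopes and ranks with the classification on $\mathrm{Bun}_X$ gives the desired equivalence, and compatibility with tensor products is formal because both constructions are monoidal by design.
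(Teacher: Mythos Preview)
The paper does not prove this theorem at all: it is stated with a citation to \cite{FF} (Theorems 11.1.7 and 11.1.9) and used as a black box. There is therefore no ``paper's own proof'' to compare your proposal against.

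That said, a few comments on your sketch relative to the actual argument in \cite{FF}. Your overall strategy---establish a Dieudonn\'e--Manin type classification for $\varphi$-modules over $\bar B$ and then match it with the classification of vector bundles on $X$---is indeed the shape of the proof in \cite{FF}: Theorem 11.1.7 there is precisely a Dieudonn\'e--Manin theorem over $\bar B$, and Theorem 11.1.9 deduces the equivalence. However, your description of the functor $\Psi$ is garbled: the phrase ``base change to $B^{b,+}[1/[\omega_K]^{-1}]$-style coefficients before reducing modulo $[\omega_K]$'' does not describe a well-defined construction, and the section module $\bigoplus_d H^0(X,\mathcal{E}(d))$ is not naturally a $\bar B$-module in the way you suggest. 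In \cite{FF} the functor goes in the other direction and is more transparent: to a $\varphi$-module $(M,\varphi_M)$ over $\bar B$ one associates the graded module $\bigoplus_{d\geq 0} M^{\varphi_M=\pi_F^d}$ and hence a coherent sheaf on $X=\mathrm{Proj}(\bigoplus_d B_K^{\varphi=\pi_F^d})$; showing this is a vector bundle already uses the slope decomposition over $\bar B$. Your identification of the ``main obstacle'' as the lifting/splitting step over $\bar B$ is correct, and that is exactly where the work in \cite{FF} Chapter 11 lies.
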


 For $(M, \varphi)\in \varphi-\Mod_{\bar B}$, the Harder-Narasimham filtration of the corresponding vector bundle gives a $\Q$-filtration $(M^{\geq \lambda})_{\lambda\in\Q}$ of $M$ which is called the Harder-Narasimham filtration of $M$ (cf. \cite{F3} 5.4.1).

For any $\beta\in G(\bar B)$, we define
\[\begin{split}\E_{\beta}: \Rep G&\longrightarrow \varphi-\Mod_{\bar B}\stackrel{\sim}{\longrightarrow}\Bun_X\\
(V, \rho)&\mapsto (V\otimes_F \bar B, \rho(\beta)\varphi)\end{split}\]

\begin{proposition}[\cite{F3} Proposition 5.11]\label{prop_classification_Bbar} The functor $\beta\mapsto \E_{\beta}$ induces a bijection between the set of $\varphi$-conjugacy classes in $G(\bar B)$ and the set of isomorphism classes of $G$-bundles on $X$.
\end{proposition}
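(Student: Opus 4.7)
The plan is to combine Theorem \ref{theorem_classification_barB-modules} with Tannakian formalism. By that theorem, giving a $G$-bundle on $X$ is equivalent to giving an exact tensor functor $\omega : \Rep G \to \varphi-\Mod_{\bar B}$. Composing $\omega$ with the forgetful functor $\varphi-\Mod_{\bar B} \to \Mod_{\bar B}$ (which forgets the semi-linear automorphism) produces a fiber functor $\omega_0$ of $\Rep G$ valued in $\bar B$-modules, which corresponds by Tannakian duality to a $G$-torsor $P := \underline{\Isom}^{\otimes}(\omega_{\mathrm{std}}, \omega_0)$ on $\Spec \bar B$, where $\omega_{\mathrm{std}}(V) = V \otimes_F \bar B$. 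The content of the proposition is that $P$ is trivial and that the residual semi-linear datum is precisely an element of $G(\bar B)$ up to $\varphi$-conjugation.

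First I would verify well-definedness and injectivity. If $\beta' = g^{-1} \beta \varphi(g)$ with $g \in G(\bar B)$, then for each representation $(V, \rho)$ the automorphism $\rho(g)$ intertwines $\rho(\beta) \varphi$ with $\rho(\beta') \varphi$, giving a natural tensor isomorphism $\E_\beta \iso \E_{\beta'}$. Conversely, any tensor isomorphism $\E_\beta \iso \E_{\beta'}$, forgotten to the underlying $\bar B$-linear fiber functors, yields an element $g \in G(\bar B)$ by Tannakian reconstruction, and compatibility with the $\varphi$-structures on both sides forces $\beta' = g^{-1} \beta \varphi(g)$.

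For surjectivity, I would trivialize $P$. The ring $\bar B$ is local with residue field $W_{\mathcal{O}_F}(k_K)_{\Q}$, a complete discretely valued field whose residue field $k_K$ is algebraically closed (since $K = C^\flat$ is the tilt of an algebraically closed field). Hence the residue field of $\bar B$ has cohomological dimension $\leq 1$, so by Steinberg's theorem $H^1(W_{\mathcal{O}_F}(k_K)_{\Q}, G) = \ast$ for the connected reductive group $G$, and $P$ becomes trivial after reducing modulo the maximal ideal. One then lifts this trivialization back to $\bar B$ via a Henselian-type property of the pair combined with the smoothness of $G$. Fixing such a trivialization $\tau : \omega_{\mathrm{std}} \iso \omega_0$ converts the remaining $\varphi$-semilinear automorphism of $\omega_0$ into an element $\beta \in G(\bar B)$ with $\E_\beta \cong \omega$, and replacing $\tau$ by $\tau \circ g$ modifies $\beta$ to its $\varphi$-conjugate $g^{-1} \beta \varphi(g)$, which completes the bijection.

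The main obstacle is the triviality of $G$-torsors on $\Spec \bar B$: one must identify the appropriate Henselian structure of $\bar B$ precisely enough so that smooth lifting of torsor sections from the residue field applies to the reductive (possibly non-split) group $G$. Once this local triviality statement is in place, the rest of the argument is a standard Tannakian reformulation of semi-linear algebra.
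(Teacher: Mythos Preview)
The paper does not give its own proof of this proposition; it is simply quoted from \cite{F3} Proposition 5.11, so there is nothing in the present paper to compare your argument against. Your outline is in fact the standard approach taken in \cite{F3}: use Theorem~\ref{theorem_classification_barB-modules} to identify $G$-bundles on $X$ with exact tensor functors $\Rep G \to \varphi\text{-}\Mod_{\bar B}$, forget the $\varphi$-structure to obtain a $G$-torsor on $\Spec \bar B$, and show this torsor is trivial by combining Steinberg's theorem over the residue field $W_{\mathcal{O}_F}(k_K)_{\Q}$ with a Henselian lifting argument.

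Your identification of the main obstacle is accurate: the substantive point is the triviality of $G$-torsors over $\Spec \bar B$. In \cite{F3} this is handled by observing that $\bar B$ is a Henselian local ring, so for the smooth group scheme $G$ the restriction map $H^1_{\et}(\Spec \bar B, G) \to H^1_{\et}(\Spec W_{\mathcal{O}_F}(k_K)_{\Q}, G)$ has trivial kernel, and the target vanishes by Steinberg since $k_K$ is algebraically closed (being the common residue field of $C$ and $C^\flat$). Once you supply the verification that $\bar B$ is Henselian, your sketch is complete and matches the cited reference.
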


We also define a functor $\mathrm{red}_{\bar B, \breve F}$ as composition of two functors:
\[\mathrm{red}_{\bar B, \breve F}: \varphi-\Mod_{\bar B}\stackrel{\otimes_{\bar B} W_{\mathcal{O}_F}(k_K)_{\Q}}{\longrightarrow}\varphi-\Mod_{W_{\mathcal{O}_F}(k_K)_{\Q}}\stackrel{\sim}{\longrightarrow} \mathrm{Isoc}_{\breve F|F}\]
where the second functor is a quasi-inverse of the functor
\[(-)\otimes_{\breve F} W_{\mathcal{O}_F}(k_K)_{\Q}:  \mathrm{Isoc}_{\breve F|F}\stackrel{\sim}{\longrightarrow}\varphi-\Mod_{W_{\mathcal{O}_F}(k_K)_{\Q}}\] which is an equivalence of categories due to Dieudonn\'e-Manin's theorem of classification of isocrystals.

\subsection{The automorphism group $\tilde{J}_b$}\label{subsection_Jb}
For $[b]\in B(G)$, let  $\tilde{J}_{b}=\underline{\mathrm{Aut}} (\E_{b})$ be the pro-\'etale sheaf of automorphisms of $\E_{b}$ on the category of affinoid perfectoid spaces $\mathrm{Perf}_{\overline{\F}_q}$ over $\overline{\F}_q$. More precisely, for any affinoid perfectoid space $S$ over $\overline{\F}_q$,  one has $\tilde{J}_b(S)=\mathrm{Aut}(\E_{b|X_S})$.


 In this subsection, we review the structure of the group $\tilde{J}_b(K)$ studied in \cite{F3} section 5.4.2. Suppose $\E_{b}$ corresponds to the $\varphi$-conjugay class of $\beta\in G(\bar B)$ as in Proposition \ref{prop_classification_Bbar}. Then
\[\tilde{J}_b(K)\simeq \{g\in G(\bar B)| g\beta=\beta\varphi(g)\}.\] We will identify these two groups via this isomorphism. In order to study the structure of $\tilde{J}_b(K)$, we need to use a parabolic subgroup of $G\otimes \bar B$ that contains $\tilde{J}_b(K)$.

Consider the functor
\[\begin{split}\mathrm{Rep}G&\longrightarrow\varphi-\Mod_{\bar B}\longrightarrow \Q-\text{filtered } \bar B-\text{modules}\\
(V, \rho)&\mapsto (V\otimes_F \bar B, \rho(\beta)\varphi) \end{split}\] where the second functor is given by the Harder-Narasimham filtration. By \cite{Z} theorem 4.40, this functor corresponds to a parabolic subgroup $\mathcal{P}\subset G\otimes_F \bar B$ with $\tilde{J}_b(K)\subset\mathcal{P}(\bar B)$. The structure of $\mathcal{P}$ is well understood. Let \[\mathrm{Ad}: G\rightarrow \mathrm{GL}(\mathfrak{g})\] be the adjoint representation with $\mathfrak{g}:=\Lie G$. Then $(\mathfrak{g}\otimes_F\bar B, \mathrm{Ad}(\beta)\varphi)$ is the $\varphi$-module over $\bar B$ corresponds to the vector bundle $\mathrm{Ad}(\E_{\beta}):=\E_{\beta}\times^{G, \mathrm{Ad}}\mathfrak{g}$. Hence it has the Harder-Narasimham filtration $(\mathfrak{g}^{\geq\lambda}_{\bar B})_{\lambda\in\Q}$. In particular, for $\lambda\neq 0$, the dimension of $\mathrm{gr}^\lambda \mathfrak{g}_{\bar B}$ equals to the number of roots $\alpha\in\Phi$ such that $\langle\alpha, \nu_b\rangle=\lambda$.  Then
\[\begin{split}\mathcal{P}&=\{g\in G_{\bar B}| \mathrm{Ad}(g)(\mathfrak{g}_{\bar B}^{\geq \bullet})=\mathfrak{g}_{\bar B}^{\geq \bullet}\}\\
\mathrm{Lie}\mathcal{P}&=\mathfrak{g}_{\bar B}^{\geq 0}\end{split}\] Moreover,  the parabolic subgroup $\mathcal{P}$ is filtered by $(\mathcal{P}^{\geq \lambda})_{\lambda\in\Q_{\geq 0}}$ such that
\[\begin{split}\mathcal{P}^{>0}&=R_u\mathcal{P};\\
\forall \lambda>0, \mathcal{P}^{\geq \lambda}/\mathcal{P}^{>\lambda}&\stackrel{\sim}{\rightarrow}\mathrm{gr}^{\lambda}\mathfrak{g}_{\bar B}\otimes \mathbb{G}_a;\\
\mathcal{P}^{\geq\lambda}&=\{g\in G_{\bar B}| (\mathrm{Ad}(g)-\mathrm{Id})(\mathfrak{g}_{\bar B}^{\geq \bullet})=\mathfrak{g}_{\bar B}^{\geq \bullet+\lambda}\}.
\end{split}\]
Let $\tilde{J}_b^{\geq \lambda}(K)=\tilde{J}_b(K)\cap \mathcal{P}^{\geq \lambda}(\bar B)$ for all $\lambda\in\Q_{\geq 0}$, then we can understand the graded pieces:
\[\begin{split}\tilde{J}_b(K)/\tilde{J}_b^{>0}(K)&\simeq J_b=\{g\in G(\breve{F})| b\sigma(g)=gb\};\\
\forall \lambda>0, \tilde{J}_b^{\geq \lambda}(K)/\tilde{J}_b^{>\lambda}(K)&\simeq (\mathrm{gr}^{\lambda}\mathfrak{g}_{\bar B})^{\mathrm{Ad}(\beta)\varphi=\mathrm{Id}}\end{split}\]
where $(\mathrm{gr}^{\lambda}\mathfrak{g}_{\bar B})^{\mathrm{Ad}(\beta)\varphi=\mathrm{Id}}$ is $\frac{\dim \mathrm{gr}^{\lambda}\mathfrak{g}_{\bar B} }{h}$ copies of $H^0(X, \mathcal{O}_X(\lambda))$ if $\lambda=\frac{d}{h}$ with $(d, h)=1$. In particular $\tilde{J}_b^{\geq \lambda}(K)\varsupsetneqq \tilde{J}_b^{>\lambda}(K)$ if there exists $\alpha\in\Phi$ such that $\langle\alpha, \nu_b\rangle=\lambda>0$.
\subsection{Modifications of a $G$-bundle on $X$}

\begin{definition}Let $\E$ be a $G$-bundle on $X$. A modification of $G$-bundles of $\E$ (on $\infty$) is a pair $(\E', u)$, where $\E'$
is a $G$-bundles on $X$ and \[u:\E|_{X\backslash\{\infty\}}\stackrel{\sim}{\longrightarrow}\E'|_{X\backslash\{\infty\}} \] is an isomorphisme
of $G$-bundles on $X\backslash\{\infty\}$. Two modifications $(\E', u)$ and $(\tilde{\E}', \tilde{u})$ of $\E$ are said to be equivalent if there exists an isomorphism $f: \E'\stackrel{\sim}{\rightarrow}\tilde{\E}'$ such that $\tilde{u}=f|_{X\backslash\{\infty\}}\circ u$.
\end{definition}

Consider the $B_{dR}$-affine Grassmannian $\Gr_G^{B_{dR}}$ attached to $G$ (cf. \cite{Sch}). We only need its $C$-points
\[\mathrm{Gr}_G^{dR}(C):=G(B_{dR})/G(B_{dR}^+).\]

For any $b\in B(G)$, let $\E_b$ be the associated $G$-bundle on $X$. For any $x\in\mathrm{Gr}_G^{dR}(C)$, we can construct a modification $\E_{b,x}$ of $\E_b$ \`a la Beauville-Laszlo given by gluing $\E_{b|X\backslash\{\infty\}}$ and the trivial bundle on $\Spec(B_{dR}^+)$ via the gluing datum given by $x$ (cf. \cite{CS} Theorem 3.4.5 and \cite{F2} 4.2, \cite{F} Proposition 3.20). Moreover, by \cite{F} Proposition 3.20, there is a bijection
\begin{eqnarray}\label{dR_modification}\mathrm{Gr}_G^{dR}(C)&\stackrel{\sim}{\longrightarrow}&\{\mathrm{equivalent\ classes\ of\ modifications\ of\ } \E_b\}\\
\nonumber x&\longmapsto& \mathrm{equivalent\ class\ of\ } (\E_{b, x}, \mathrm{Id}) \end{eqnarray}

For $\mu\in X_*(T)^+$, the corresponding affine Schubert cell is
\[\mathrm{Gr}_{G, \mu}^{B_{dR}}(C)=G(B_{dR}^+)\mu(t)^{-1}G(B_{dR}^+)/G(B_{dR}^+)\subset \mathrm{Gr}_G^{B_{dR}}(C).\]

Here we use the non standard notion of affine Schubert cell associated to anti-dominant $\mu^{-1}$. This affine Schubert cell is closely related to the modification of $G$-bundles of type $\mu$ as in the following definition.

\begin{definition}A modification of $\E_b$ is of type $\mu$ if its equivalent class falls in the affine Schubert cell $\mathrm{Gr}_{G, \mu}^{B_{dR}}(C)$ via (\ref{dR_modification}).
\end{definition}

The natural action of $\tilde{J}_b(K)=\mathrm{Aut}(\E_b)$ on the set of modifications of $\E_b$ induces via (\ref{dR_modification}) an action of $\tilde{J}_b(K)$ on $\mathrm{Gr}_G^{B_{dR}}(C)$.

Let $\widehat{\E_{b}}_\infty$ be the local completion of $\E_b$ at $\infty$. It is canonically trivialized. Hence there is a natural morphism
\begin{eqnarray*}\label{morphism_tilde J_b}\alpha_{b, G}: \tilde J_b(K) =\mathrm{Aut}(\E_b)\rightarrow \mathrm{Aut}(\widehat{\E_{b}}_\infty)=G(\BpdR) \end{eqnarray*}

The action of $\tilde{J}_b(K)$ on $\mathrm{Gr}_G^{B_{dR}}(C)$ is given by left multiplication via $\alpha_{b, G}$.

\begin{lemma}\label{lemma_Jbaction} Let $\gamma\in \tilde{J}_b(K)$.  For any $x\in \mathrm{Gr}_G^{B_{dR}}(C)$, the automorphism $\gamma: \E_b\stackrel{\sim}{\rightarrow}\E_b$ induces an automorphism  \[\tilde{\gamma}: \E_{b, x}\stackrel{\sim}{\rightarrow} \E_{b, \gamma(x)}\] such that the following diagram is commutative:
\[\xymatrix{\E_{b|X\backslash\{\infty\}}\ar[r]^{\gamma}\ar[d]_{\mathrm{Id}}&\E_{b|X\backslash\{\infty\}}\ar[d]^{\mathrm{Id}}\\ \E_{b,x|X\backslash\{\infty\}}\ar[r]^{\tilde{\gamma}}&\E_{b, \gamma(x)| X\backslash\{\infty\}}}.\]\end{lemma}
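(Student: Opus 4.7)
The plan is to unwind the Beauville--Laszlo construction of the modification $\E_{b,x}$ and check that the $\tilde J_b(K)$-action on $\mathrm{Gr}_G^{B_{dR}}(C)$ is compatible with it, with $\tilde\gamma$ built by taking $\gamma$ on the part away from $\infty$ and the identity on the formal completion at $\infty$.

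More precisely, recall that $\E_{b,x}$ is obtained by gluing the triple $(\E_b|_{X\setminus\{\infty\}},\, \E^0,\, \iota_x)$, where $\E^0$ is the trivial $G$-bundle on $\mathrm{Spec}(B_{dR}^+)$ and $\iota_x:\E^0|_{\mathrm{Spec}(B_{dR})}\xrightarrow{\sim}\widehat{\E_b}_\infty|_{\mathrm{Spec}(B_{dR})}$ is the isomorphism determined by $x$ via the canonical trivialization of $\widehat{\E_b}_\infty$; similarly $\E_{b,\gamma(x)}$ comes from $(\E_b|_{X\setminus\{\infty\}},\,\E^0,\,\iota_{\gamma(x)})$. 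A morphism between these two modifications is a pair $(f,g)$ consisting of an automorphism $f$ of $\E_b|_{X\setminus\{\infty\}}$ and an automorphism $g$ of $\E^0$ such that $f\circ\iota_x=\iota_{\gamma(x)}\circ g$ after base change to $\mathrm{Spec}(B_{dR})$. I will take $f=\gamma|_{X\setminus\{\infty\}}$ and $g=\mathrm{Id}$.

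To see the compatibility condition, note that under the canonical trivialization of $\widehat{\E_b}_\infty$, the automorphism $\gamma$ acts on the formal completion by left multiplication by $\alpha_{b,G}(\gamma)\in G(B_{dR}^+)$. Since the $\tilde J_b(K)$-action on $\mathrm{Gr}_G^{B_{dR}}(C)$ is by definition left multiplication through $\alpha_{b,G}$, we have $\gamma(x)=\alpha_{b,G}(\gamma)\cdot x$ in $G(B_{dR})/G(B_{dR}^+)$. Therefore $\gamma\circ\iota_x$ is the gluing datum represented by $\alpha_{b,G}(\gamma)\cdot x=\gamma(x)$, which is exactly $\iota_{\gamma(x)}=\iota_{\gamma(x)}\circ\mathrm{Id}$. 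By Beauville--Laszlo this pair $(\gamma|_{X\setminus\{\infty\}},\mathrm{Id})$ descends to a morphism $\tilde\gamma:\E_{b,x}\xrightarrow{\sim}\E_{b,\gamma(x)}$ of $G$-bundles on $X$, and its restriction to $X\setminus\{\infty\}$ is $\gamma$ by construction, which gives exactly the commutative diagram in the statement.

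There is no genuine difficulty here: the argument is purely a matter of tracking definitions, and the only point that requires care is the sign/side convention ensuring that the action on $\mathrm{Gr}_G^{B_{dR}}(C)$ via $\alpha_{b,G}$ matches the change in gluing datum when one composes $\iota_x$ with $\gamma$ at the completion. Once this is set up correctly, the descent via Beauville--Laszlo (as already used to construct $\E_{b,x}$ via the bijection (\ref{dR_modification})) finishes the proof.
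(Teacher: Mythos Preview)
Your proof is correct and follows essentially the same Beauville--Laszlo gluing idea as the paper. The only cosmetic difference is that the paper first reduces to $G=\mathrm{GL}_n$ by the Tannakian formalism and then works with the explicit triple $(M_e,M_{dR},u)$ of Proposition~\ref{prop_classification_recollement} (taking $\tilde\gamma=(\gamma_e,\gamma_{dR}|_{M_x})$ on the pair $(M_e,M_x)$), whereas you work directly with $G$-bundles and keep the trivial bundle at $\infty$ fixed while absorbing $\alpha_{b,G}(\gamma)$ into the gluing datum; these are two equivalent bookkeeping choices for the same construction.
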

\begin{proof}By Tannakian formalism, it suffices to deal with the case when $G=\mathrm{GL}_n$. Then $G$-bundles on $X$ is the same thing as vector bundles of rank $n$. Suppose $\E_b$ corresponds to the triple $(M_e, M_{dR}, u)$ as in Proposition \ref{prop_classification_recollement}, then $x\in \mathrm{Gr}_G^{B_{dR}}(C)$ corresponds to a $B_{dR}^+$-lattice $M_x$ in $M_{dR}\otimes_{B^+_{dR}}B_{dR}$ and $\E_{b, x}$ corresponds to the triple $(M_e, M_x, u)$. The automorphism $\gamma: \E_b\stackrel{\sim}{\rightarrow}\E_b$ corresponds to a pair $(\gamma_e, \gamma_{dR})$ of automorphisms compatible with $u$, where \[\gamma_e: M_e\stackrel{\sim}{\longrightarrow}M_e, \gamma_{dR}: M_{dR}\stackrel{\sim}{\longrightarrow}M_{dR}.\] Then $\E_{b, \gamma(x)}$ corresponds to the triple $(M_e, \gamma_{dR}(M_x), u)$ where $\gamma_{dR}(M_x)$ is the image of $M_x$ via $\gamma_{dR}\otimes_{B_{dR}^+}B_{dR}: M_{dR}\otimes_{B_{dR}^+}B_{dR}\stackrel{\sim}{\rightarrow} M_{dR}\otimes_{B_{dR}^+}B_{dR}$. We define $\tilde{\gamma}: \E_{b, x}\stackrel{\sim}{\rightarrow} \E_{b, \gamma(x)} $ to be the automorphism corresponds to \[\gamma_e: M_e\stackrel{\sim}{\longrightarrow}M_e, \gamma_{dR}: M_x\stackrel{\sim}{\longrightarrow}\gamma_{dR}(M_x).\]The commutativity of the diagram can be verified directly.
\end{proof}

Recall that we have the {\it Bialynicki-Birula map } (cf. \cite{CS} Propostion 3.4.3)
$$
\pi_{G,\mu}:\Gr^{\mathrm{B_{dR}}}_{G,\mu} (C) \longrightarrow \Fc  (G,\mu) (C).
$$
By Tannakian formalism, we may reduce the construction to the case when $G=\mathrm{GL}_n$. In this case, $\Gr^{\mathrm{B_{dR}}}_{G,\mu} (C)$ paramatrizes the lattices in $\mathrm{B}_{dR}^n$ that has relative position ${\mu}^{-1}$ with the standard lattice $\mathrm{B}_{dR}^{+, n}$. Write $\mu=(k_1, \cdots, k_n)$ with $k_1\geq k_2\geq\cdots\geq k_n$. Suppose $\Lambda\in \Gr^{\mathrm{B_{dR}}}_{G,\mu} (C)$. We define an increasing filtration $\mathrm{Fil}_{\Lambda}^{\bullet}$ of $C^n$ as follows: for any $m\in\Z$,
\[\mathrm{Fil}_{\Lambda}^m C^n:= (\mathrm{B}_{dR}^+)^n\cap t^{-m}\Lambda/((t\mathrm{B}_{dR}^+)^n\cap t^{-m}\Lambda)\subseteq (\mathrm{B}_{dR}^+)^n/(t\mathrm{B}_{dR}^+)^n=C^n.\] It's easy to check that \[\mathrm{dim}_C \mathrm{Fil}_{\Lambda}^m C^n=\mathrm{max}\{1\leq i\leq n| k_i\geq -m\}.\] Therefore $\pi_{G,\mu}(\Lambda):=\mathrm{Fil}_{\Lambda}^{\bullet}\in \Fc(G, \mu)(C)$.

From now on, suppose $\mu$ is minuscule. Then the Bialynicki-Birula map $\pi_{G, \mu}$
is an isomorphism by \cite{CS} Lemma 3.4.4. For $x\in\Fc(G, \mu)(C)$, we denote by $\E_{b, x}$ the modification $\E_{b, \pi_{G, \mu}^{-1}(x)}$ of $\E_b$ of type $\mu$.

When $[b]\in B(G)$ is basic, the isomorphism classes of the modifications of $\E_b$ can be classified as follows.

 \begin{proposition}[\cite{Ra2} A.10, \cite{CFS} Prop. 5.2]\label{prop:description modification basique}
 Let $[b]\in B(G)$ be basic. Let \[B(G, \kappa_G(b)- \mu^\sharp, \nu_b\mu^{-1}):=B(G, \kappa_G(b)-\mu^\sharp, \nu_b(w_0\mu^{-1})^{\diamond})\subseteq B(G).\](Here we write the element $\nu_b(w_0\mu^{-1})^{\diamond}$ in $X_*(A)^+_{\Q}$ in the multiplicative form but not the usual additive form.) The map $[b']\mapsto [\E_{b'}]$ gives a bijection
\[B(G, \kappa_G(b)- \mu^\sharp, \nu_b\mu^{-1})\simeq \{ \E_{b,x}|\; x\in  \Fc(G,\mu)(C)\} /\sim . \]
\end{proposition}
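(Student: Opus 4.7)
The plan is to split the bijection into injectivity, containment of the image in $B(G,\kappa_G(b)-\mu^\sharp,\nu_b\mu^{-1})$, and surjectivity. Injectivity is immediate from the bijection $B(G)\iso H^1_{\text{\'et}}(X,G)$, $[b']\mapsto[\E_{b'}]$ recalled earlier in this section.

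For containment, fix $x\in\Fc(G,\mu)(C)$, let $[b']\in B(G)$ classify $\E_{b,x}$, and compute its two invariants. The Kottwitz invariant reduces via the Tannakian formalism (applied to rational characters $\chi\in X^*(G)$) to tracking degrees; a direct calculation, using that the Schubert cell is defined via $\mu^{-1}(t)$, gives $\kappa_G(b')=\kappa_G(b)-\mu^\sharp$. For the Newton bound $[\nu_{b'}]\leq\nu_b\cdot(w_0\mu^{-1})^\diamond$, I would reduce via Tannaka to $\GL_n$: since $[b]$ is basic, $\E_b$ is semistable, and the Mazur-type inequality for $\BpdR$-lattice modifications of a semistable vector bundle on $X$ --- a direct slope computation from the Fargues--Fontaine classification of vector bundles --- yields the required dominance-order inequality.

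For surjectivity, let $[b']$ be any class in $B(G,\kappa_G(b)-\mu^\sharp,\nu_b\mu^{-1})$. Via \eqref{dR_modification}, producing $x$ with $\E_{b,x}\cong\E_{b'}$ amounts to constructing an isomorphism $\E_{b|X\backslash\{\infty\}}\iso\E_{b'|X\backslash\{\infty\}}$ whose relative position at $\infty$ is the Cartan double coset of $\mu^{-1}(t)$. Fixing trivializations at $\infty$, this becomes a question about the Cartan decomposition $G(\BdR)=\coprod_{\mu'}G(\BpdR)\mu'(t)^{-1}G(\BpdR)$, and I would argue that the double coset indexed by $\mu$ is hit by combining the $\tilde J_b$-action on $\Fc(G,\mu)(C)$, which preserves the Newton map by Lemma \ref{lemma_Jbaction}, with a Hodge--Newton / Levi reduction to the case where $[b']$ is basic in a Levi subgroup; in that basic case an explicit construction from $b$, $b'$, and the cocharacter $\mu$ produces the required modification directly.

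The main obstacle is surjectivity. The containment direction is a straightforward slope argument, but realizing every Kottwitz class by a modification of type $\mu$ requires a genuine construction; the basicness of $[b]$ is essential here, since it is what allows the $\tilde J_b$-action and the associated Newton-stratification structure recalled in \S\ref{subsection_Jb} to be exploited.
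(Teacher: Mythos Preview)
The paper does not supply a proof of this proposition: it is quoted verbatim from \cite{Ra2} A.10 and \cite{CFS} Prop.~5.2, so there is no in-text argument to compare against. Your injectivity step and the containment step are fine and are exactly what one does: injectivity is the bijection $B(G)\iso H^1_{\et}(X,G)$, and the containment follows from Proposition~\ref{prop_invariant_bundle} (for $\kappa$) together with a Mazur-type slope estimate for minuscule modifications of a semistable bundle (for the Newton bound).

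Your surjectivity sketch, however, has a genuine gap. The $\tilde J_b$-action cannot help here: as you yourself note via Lemma~\ref{lemma_Jbaction}, it preserves the isomorphism class of $\E_{b,x}$, so acting by $\tilde J_b$ never produces a new $[b']$. The proposed ``Hodge--Newton / Levi reduction to $[b']$ basic in a Levi'' is also not a reduction in the needed direction. Passing to the centralizer Levi $M_{b'}$ does make $[b']$ basic there, but you then need a modification of $\E_{b_{M_{b'}}}$ of some $M_{b'}$-type $w\mu$ realizing $\E_{b'_{M_{b'}}}$, \emph{and} you must know that pushing this forward to $G$ lands in the Schubert cell for $\mu$ rather than some other $W$-conjugate. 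Both of these are precisely instances of the surjectivity you are trying to prove (for $M_{b'}$ in place of $G$, and then a compatibility statement), so the argument is circular unless you supply an independent construction in the basic-in-Levi case---which you leave as ``an explicit construction'' without saying what it is. In the cited sources the surjectivity is obtained by a different mechanism (in \cite{CFS} via a duality/twin-towers argument reducing to the non-emptiness of the admissible locus, and in \cite{Ra2} via accessibility), not by the $\tilde J_b$-action or a bare Levi reduction.
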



The action of $\tilde J_b (K)$ on  $\Gr^{\mathrm{B_{dR}}}_{G,\mu} (C)$ defined by the multiplication on the left via the morphism $\alpha_{b, G}$ induces an action of $\tilde J_b (K)$ on $\Fc(G, \mu)(C)$ via the Bialynicki-Birula map $\pi_{G,\mu}$. For any $\gamma\in\tilde J_b(K)$, we have an automorphism (still denoted by) \[\tilde{\gamma}: \E_{b, x}\stackrel{\sim}{\rightarrow}\E_{b, \gamma(x)} \] of $G$-bundles for any $x\in \Fc(G, \mu)(C)$.


\section{Admissible locus and weakly admissible locus}\label{section_a and wa}
\subsection{Reductions of $G$-bundles}
\begin{definition}\begin{enumerate}
                    \item Let $H\subseteq G $ be a closed subgroup of $G$. Suppose $\E$ is a $G$-bundle on $X$. A reduction of $\E$ to $H$ is a pair $(\E_H, \iota)$ where $\E_H$ is a $H$-bundle and $\iota: \E_H\times^H G\stackrel{\sim}{\rightarrow}\E$ is an isomorphism of $G$-bundles. We will also write $\E_H$ for such a reduction if we don't need to emphasis $\iota$.
\item Two reductions $(\E_H, \iota)$ and $(\E_H', \iota')$ of $\E$ to $H$ are called equivalent if there exists an isomorphism $u: \E_H\stackrel{\sim}{\rightarrow}\E_H'$ such that $\iota=\iota'\circ (u\times^H G)$.
\end{enumerate}
\end{definition}

\begin{remark}\label{remark_section}The equivalence classes of redcutions of $\E$ to $H$ are in bijection with the sections of the fibration $H\backslash\E\rightarrow X$.
\end{remark}

We will assume $G$ quasi-split in the rest of this subsection.

\begin{definition}Let $b\in G(\breve F)$. For a Levi subgroup $M$ of $G$, a reduction of $b$ to $M$ is a pair $(b_M, g)$ with $b_M\in M(\breve F)$ and $g\in G(\breve F)$ such that $b=gb_M\sigma(g)^{-1}$. We also write $b_M$ for such a reduction if we don't need to emphasize $g$. Two reductions $(b_M, g)$ and $(b_M', g')$ of $b$ to $M$ are equivalent if and only if there exists $h\in M(\breve F)$ such that $(b_M', g')=(hb_M\sigma(h)^{-1}, gh^{-1})$. Similarly, we can define the same notion for parabolic subgroups.  \end{definition}

There is a natural injective map
\[\{\text{equivalent classes of reductions of } b \text{ to } M \}\longrightarrow \{\text{equivalent classes of reductions of } \E_b \text{ to } M \}.\] This map is in general not surjective.

\begin{example}Let $G=\mathrm{GL}_5$ with Levi subgroup $M=\mathrm{GL}_3\times \mathrm{GL}_2$. Let $b\in G(\breve F)$ with Newton slopes $(\frac{1}{3}, \frac{1}{3}, \frac{1}{3}, \frac{1}{2}, \frac{1}{2})$. Then there exists a unique equivalent class of reductions of $b$ to $M$. However, as the decomposition of $\E_b=\mathcal{O}(-\frac{1}{3})\oplus \mathcal{O}(-\frac{1}{2})$ on semi-stable vector bundles is not canonical because of the existence of morphisms $\mathcal{O}(-\frac{1}{2})\rightarrow \mathcal{O}(-\frac{1}{3})$, there exist infinite equivalent classes of reductions of $\E_b$ to $M$.
\end{example}

The following lemma will be used frequently in the sequel.
\begin{lemma}
\label{lemma_redction modfication}[\cite{CFS} Lemma 2.4]
Let $\E$ and $\E'$ be two $G$-bundles on $X$ with a modification $\E|_{X\setminus\{\infty\}}\st{\sim}{\ra}\E'|_{X\setminus\{\infty\}}$. Then for any parabolic subgroup $P$ of $G$, we have a bijection
\[ \{\tr{Reductions of }\; \E \tr{ to }\; P\}\lra \{ \tr{Reductions of }\; \E'\;\tr{ to }\; P\}.\]
\end{lemma}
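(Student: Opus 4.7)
The plan is to translate the statement about reductions into a statement about sections of an associated fiber bundle, and then to observe that the modification affects only the point $\infty$, so the bijection is governed by a local extension question on the formal disk at $\infty$ which will be handled by the valuative criterion of properness.

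First I would invoke Remark \ref{remark_section} to replace reductions of $\E$ (resp.\ $\E'$) to $P$ by sections of the fiber bundle $P\backslash\E \to X$ (resp.\ $P\backslash\E'\to X$). These fibrations are étale-locally isomorphic to the partial flag variety $P\backslash G$, which is a smooth projective $F$-scheme. Consequently $P\backslash\E \to X$ and $P\backslash\E'\to X$ are smooth and proper.

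Next I would use that the modification $u:\E|_{X\setminus\{\infty\}}\iso \E'|_{X\setminus\{\infty\}}$ induces an isomorphism of the associated fiber bundles over $X\setminus\{\infty\}$, hence an obvious bijection between the sections of $P\backslash\E$ and those of $P\backslash\E'$ over $X\setminus\{\infty\}$. Thus the content of the lemma is that any section of $P\backslash\E$ (resp.\ $P\backslash\E'$) over $X\setminus\{\infty\}$ extends uniquely to a global section on $X$. By the Beauville--Laszlo type gluing of Proposition \ref{prop_classification_recollement} (applied to the proper smooth $X$-scheme $P\backslash\E'$ in place of a vector bundle), giving a section over $X$ is equivalent to giving a section over $X\setminus\{\infty\}$ together with a compatible section over the formal disk $\Spec \BpdR$.

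The crux is therefore the following local claim at $\infty$: for any smooth proper $\BpdR$-scheme $Y$ (in our case the restriction of $P\backslash\E'$ to $\Spec \BpdR$, which is a twisted form of $P\backslash G$), the restriction map $Y(\BpdR)\to Y(\BdR)$ is a bijection. Since $\BpdR$ is a complete discrete valuation ring with fraction field $\BdR$, this is exactly the valuative criterion of properness. The hardest—though still routine—point is simply to check that the fiber bundle $P\backslash\E'\to X$ really is representable by a smooth proper scheme in a Zariski (or étale) neighborhood of $\infty$, so that the valuative criterion applies; this follows because $P\backslash G$ is projective and the twisting by a $G$-bundle can be trivialized étale-locally on $X$. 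Carrying out the symmetric construction with $u^{-1}$ gives the inverse bijection, completing the proof.
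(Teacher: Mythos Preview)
The paper does not give its own proof of this lemma; it simply cites \cite{CFS} Lemma 2.4. Your argument is correct and is essentially the standard one: reductions to $P$ are sections of the proper fibration $P\backslash\E\to X$ (Remark \ref{remark_section}), the modification identifies the two fibrations over $X\setminus\{\infty\}$, and the valuative criterion of properness ensures that any section over $X\setminus\{\infty\}$ extends uniquely across $\infty$.

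One minor simplification: the detour through Beauville--Laszlo gluing (Proposition \ref{prop_classification_recollement}) is unnecessary and slightly awkward, since that proposition is stated for vector bundles, not for sections of arbitrary proper $X$-schemes. You can instead apply the valuative criterion of properness directly: $X$ is a Dedekind scheme, so $\mathcal{O}_{X,\infty}$ is a discrete valuation ring, and a section of the proper $X$-scheme $P\backslash\E'$ over $\Spec\,\mathrm{Frac}(\mathcal{O}_{X,\infty})$ extends uniquely to $\Spec\,\mathcal{O}_{X,\infty}$, which glues with the given section over $X\setminus\{\infty\}$ to give a global section. This avoids having to justify a Beauville--Laszlo statement for non-vector-bundle objects.
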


Let $\E$ be a $G$-bundle on $X$, by \cite{F3} 5.1, there exists the canonical reduction $\E_P$ of $\E$ to a unique standard parabolic subgroup $P$ of $G$ such that
\begin{itemize}
\item the assoicated $M$-bundle $\E_P\times^{P}M$ is semi-stable,  where $M$ is the Levi component of $P$,
\item  for any $\chi\in X^*(P/Z_G)\backslash\{0\}\cap \mathbb{N}\Delta_G$,  we have $\deg \chi_*\E_P>0$.
\end{itemize}
Using the Harder-Narasimham reduction $\E_P$, we can define the slope
\[\nu_{\E}\in X_*(A)_{\Q}\] of $\E$ by the Galois invariant morphism $X^*(P)\rightarrow \Z$ which maps $\chi\in X^*(P)$ to $\deg\chi_*\E_P$ combined with the inclusion \[\Hom_{\Z}(X^*(P), \Z)^{\Gamma}=X_*(M^{ab})^{\Gamma}\subset X_*(A_M)_{\Q}\subseteq X_*(A)_{\Q}, \]where $M^{ab}$ is the cocenter of $M$ and $A_M\subseteq A$ is a maximal split central torus of $M$.

\begin{proposition}[\cite{FF} Proposition 6.6, \cite{CS} Lemma 3.5.5]\label{prop_invariant_bundle}Let $[b]\in B(G)$ and $x\in \mathrm{Gr}_{G, \mu}^{B_{dR}}(C)$.  Then
\begin{enumerate}
\item we have an equality in the positive Weyl chamber
\[\nu_{\E_b}=-w_0[\nu_b],\] where $w_0$ is the element of longest length in the Weyl group $W$;
\item \[c_1^G(\E_{b,x})=\mu^{\sharp}-\kappa_G(b)\in \pi_1(G)_{\Gamma},\]where $c_1^G$ denotes the $G$-equivariant first Chern class. In particular,  $c_1^G(\E_b)=-\kappa_G(b)$.
\end{enumerate}
\end{proposition}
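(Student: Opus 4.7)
The plan is to prove both statements by Tannakian reduction to the case $G=\mathrm{GL}_n$, where the Dieudonn\'e--Manin decomposition combined with the Fargues--Fontaine classification (Theorem~\ref{theo:CFS basic}'s ambient context, i.e.\ the explicit functor $\E(-)$ sending a simple isocrystal of slope $\lambda$ to $\mathcal{O}(-\lambda)$) makes everything concrete. Both sides of (1) lie in the closed Weyl chamber $X_*(A)^+_\Q$, and the slope $\nu_{\E_b}$ is characterized by the Galois--equivariant map $X^*(P)\to\Z$, $\chi\mapsto\deg\chi_*(\E_{b})_P$, coming from the canonical Harder--Narasimhan reduction $(\E_b)_P$. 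Similarly, the $G$-equivariant first Chern class in $\pi_1(G)_\Gamma$ is, by construction, the universal invariant built from pushforwards along characters. In both cases, the identities we need to verify can be tested after composing with arbitrary representations $\rho:G\to\mathrm{GL}(V)$, which reduces everything to statements about vector bundles on $X$.

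For part (1), write $[\nu_b]=(\lambda_1\geq\cdots\geq\lambda_n)$ for $G=\mathrm{GL}_n$. The Dieudonn\'e--Manin theorem splits the isocrystal $(V_{\breve F},b\sigma)$ into isoclinic pieces of slopes $\lambda_i$, and applying $\E(-)$ gives $\E_b\cong\bigoplus_i \mathcal{O}(-\lambda_i)$. Each $\mathcal{O}(-\lambda_i)$ is semi-stable of slope $-\lambda_i$, so the Harder--Narasimhan filtration of $\E_b$ has slopes $-\lambda_n\geq -\lambda_{n-1}\geq\cdots\geq -\lambda_1$; rewriting in the dominant chamber yields $\nu_{\E_b}=(-\lambda_n,\ldots,-\lambda_1)=-w_0[\nu_b]$. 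For a general reductive $G$, the same formula holds coordinate-wise after composing with any faithful representation and passing to the associated vector bundle, so the identity $\nu_{\E_b}=-w_0[\nu_b]$ in $X_*(A)^+_\Q$ follows.

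For part (2), since $c_1^{\mathrm{GL}_n}=\deg$ on $\Bun_X$ and $\kappa_{\mathrm{GL}_n}(b)=v_p(\det b)=\sum\lambda_i$, the decomposition $\E_b\cong\bigoplus\mathcal{O}(-\lambda_i)$ immediately gives $c_1^{\mathrm{GL}_n}(\E_b)=-\sum\lambda_i=-\kappa_{\mathrm{GL}_n}(b)$; naturality of $c_1^{(-)}$ and $\kappa_{(-)}$ with respect to morphisms of reductive groups extends this to arbitrary $G$, yielding the last clause. For the modification, we use the Beauville--Laszlo description of $\E_{b,x}$: modifying $\E_b$ at $\infty$ by relative position $\mu^{-1}$ (the convention in the Schubert cell $G(\BpdR)\mu(t)^{-1}G(\BpdR)$) changes the $B_{dR}^+$-lattice at $\infty$ by $\mu(t)^{-1}$. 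Under $\mathrm{GL}_n$ with $\mu=(k_1,\ldots,k_n)$, this shifts the local length by $+\sum k_i=\mu^\sharp$, so $\deg \E_{b,x}-\deg\E_b=\mu^\sharp$. Functoriality in $G$ again upgrades this to the identity $c_1^G(\E_{b,x})-c_1^G(\E_b)=\mu^\sharp$ in $\pi_1(G)_\Gamma$, and combining with the formula for $c_1^G(\E_b)$ gives $c_1^G(\E_{b,x})=\mu^\sharp-\kappa_G(b)$. The main pitfall throughout is bookkeeping of signs: the minus signs in (1) and (2) both come from the fact that $\E(-)$ inverts slopes and that the Schubert cell is indexed by $\mu^{-1}$ in the convention used here; once those are tracked, the Tannakian reduction is routine.
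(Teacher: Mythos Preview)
The paper does not give its own proof of this proposition; it is quoted from the literature (\cite{FF} Proposition~6.6 and \cite{CS} Lemma~3.5.5), so there is nothing to compare against directly. Your Tannakian reduction to $\mathrm{GL}_n$ is correct and is essentially the argument one finds in those references: the sign in (1) comes from the fact that the functor $\E(-)$ sends a simple isocrystal of slope $\lambda$ to $\mathcal{O}(-\lambda)$, and the computation $c_1^G(\E_{b,x})-c_1^G(\E_b)=\mu^\sharp$ in (2) is exactly the determinant-lattice shift you describe. One small point of presentation: when you pass from $\mathrm{GL}_n$ to general $G$ in part (1), the phrase ``coordinate-wise after composing with any faithful representation'' is a bit loose, since $\nu_{\E_b}$ is defined via the canonical Harder--Narasimhan reduction rather than directly Tannakianly; the cleanest way to phrase it is that both $\nu_{\E_b}$ and $-w_0[\nu_b]$ are characterized by the property that for every $(V,\rho)\in\Rep G$ the Harder--Narasimhan polygon of $\rho_*\E_b$ is given by pairing with $\rho_*(\nu_{\E_b})$, and the $\mathrm{GL}_n$ computation verifies this for $-w_0[\nu_b]$.
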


Recall the following fact:
\begin{theorem}[\cite{Schi} theo. 4.5.1]
\label{theoSchi}
Let $\E$ be a $G$-bundle on $X$.
\begin{enumerate}
\item Suppose $\E_Q$ is a reduction of $\E$ to the standard parabolic subgroup $Q$. Consider the vector
\begin{align*}
v: X^*(Q) & \longrightarrow \Z \\
\chi & \longmapsto \deg \, \chi_* \E_Q
\end{align*}
seen as an element of $X_*(A)_\Q$.  Then one has $v\leq \nu_\E$. Moreover, if this inequality is an equality,  then $Q\subset P$ and $\E_P \simeq \E_Q\times^Q P$, where $\E_P$ is the canonical reduction of $\E$.
\item The vector $\nu_\E$ can be defined as being the supremum of all such vectors $v$ associated to all possible reductions $\E_Q$ in the poset $X_* (A)_\Q$.

\end{enumerate}
\end{theorem}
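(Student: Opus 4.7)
The plan is to deduce part (2) as a formal consequence of part (1) and then prove part (1) by a Tannakian slope comparison against the canonical Harder--Narasimhan reduction $\E_P$. Part (2) follows immediately: by the definition of $\nu_\E$ from the canonical reduction $\E_P$, the vector $v$ associated to $\E_P$ itself equals $\nu_\E$, so once (1) is proved, the supremum of all such vectors is attained at $\E_P$ and equals $\nu_\E$ in the poset $X_*(A)_\Q$.

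For the inequality $v\leq \nu_\E$ in part (1), the approach is to test against dominant characters. Let $Q$ be a standard parabolic with Levi $L$, let $\E_Q$ be a reduction of $\E$ to $Q$, and fix $\chi\in X^*(T)^+$ dominant. Via Tannakian formalism, form the associated vector bundle $\E(V_\chi)$ on $X$ attached to the irreducible representation $V_\chi$ of highest weight $\chi$. The reduction $\E_Q$ provides a decreasing filtration of $\E(V_\chi)$ indexed by the weights of $V_\chi$ under the action of $Q$; the slope of the top piece of this filtration is precisely $\langle \chi, v\rangle$. Meanwhile, the Harder--Narasimhan polygon of $\E(V_\chi)$ has top slope $\langle \chi,\nu_\E\rangle$, where one crucially uses that $\E_P\times^P M$ is semistable to compute the HN filtration of $\E(V_\chi)$ from $\E_P$. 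The general slope inequality---any subbundle has slope at most the maximal HN slope---gives $\langle \chi,v\rangle \leq \langle \chi,\nu_\E\rangle$, and varying $\chi$ over dominant characters then yields $v\leq \nu_\E$ in $X_*(A)_\Q$.

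For the equality case, suppose $v=\nu_\E$. Then the inequality above is an equality for every dominant $\chi$, which forces the top piece of the filtration on $\E(V_\chi)$ induced by $\E_Q$ to coincide with the top piece of the HN filtration for every $V_\chi$. By Tannakian reconstruction, this simultaneous compatibility of filtrations across all representations is equivalent to $Q\subseteq P$ together with an isomorphism $\E_P\simeq \E_Q\times^Q P$ of $P$-bundles; the latter identification can be recast via Remark \ref{remark_section} as equality of the corresponding sections of $P\backslash \E\to X$.

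The main obstacle I anticipate lies in the equality case, where one must upgrade numerical equalities of polygons for every representation into a scheme-theoretic identification of parabolic reductions. The inequality itself reduces to a clean slope argument once one invokes the semistability of $\E_P\times^P M$, but the equality statement requires both the Tannakian dictionary between filtrations of associated vector bundles and parabolic reductions, and the uniqueness of the canonical reduction of $\E$---without that uniqueness, one could not conclude the containment $Q\subseteq P$ nor pin down the isomorphism $\E_P\simeq \E_Q\times^Q P$.
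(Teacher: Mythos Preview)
The paper does not prove this theorem: it is quoted verbatim from \cite{Schi}, Theorem~4.5.1, and used as a black box throughout (for instance in Corollary~\ref{Coro_HN equality among reduction to M}, in the proof of Proposition~\ref{Prop_preimage_admissible}, and in the proof of Proposition~\ref{Prop_not basic}). So there is no ``paper's own proof'' to compare your attempt against.

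That said, your sketch is broadly in the spirit of how such results are proved in the literature (Behrend, Biswas--Holla, Schieder). A couple of points deserve more care if you intend this as an actual proof rather than a heuristic. First, the assertion that the Harder--Narasimhan filtration of $\E(V_\chi)$ is computed from the canonical reduction $\E_P$ is itself the substantive content: one needs the compatibility statement that for a semistable $M$-bundle the associated vector bundle to any $M$-representation is again semistable (in a suitable sense), so that the weight filtration coming from $P$ really is the HN filtration. You invoke this but do not argue it, and over the Fargues--Fontaine curve this is precisely the input from \cite{F3}. Second, in your slope comparison the ``top piece'' of the $Q$-filtration of $V_\chi$ is not a line but the $L$-irreducible of highest weight $\chi$; you should check that its slope as a vector bundle (degree over rank) is indeed $\langle\chi,v\rangle$, which amounts to identifying the central character of that $L$-representation. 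Finally, the equality case is where the real work lies: upgrading equality of maximal slopes across all $V_\chi$ to an identification of parabolic reductions requires the full uniqueness of the canonical reduction and a Tannakian argument that you only gesture at. None of this is wrong, but as written it is an outline rather than a proof.
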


\begin{remark}\label{remark_vector v}If we view $v$ as an element in $X_*(A)_{\Q}$, then $v=\mathrm{Av}_{W_{M_Q}}(\nu_{\E_{M_Q}})$, where $M_Q$ is the Levi component of $Q$ and $\E_{M_Q}=\E_{Q}\times^{Q}M$.
 \end{remark}

\begin{corollary}\label{Coro_HN equality among reduction to M}Suppose $\E$ is a $G$-bundle with $\E_{P}$ a reduction to $P$. Let $\E'=(\E_{P}\times^P M)\times^M G$. Then $\nu_{\E}\preceq \nu_{\E'}$. In particular, if $\E_{P}\times^P M$ is a trivial $M$-bundle, then $\E$ is a trivial $G$-bundle.
\end{corollary}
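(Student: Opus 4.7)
The plan is Tannakian: reduce the inequality $\nu_\E\preceq \nu_{\E'}$ to a comparison of Harder--Narasimham polygons of $\E(V)$ and $\E'(V)$ for every $V\in\Rep G$.

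First I would construct, for each $V\in\Rep G$, a natural $P$-stable filtration of $V$ whose associated graded recovers $V|_M$ as an $M$-module. Concretely, choose a strictly dominant cocharacter $\chi\colon \G_m\to Z_M$ (pairing positively with every root of $T$ in $N_P$). The $\chi$-action grades $V=\bigoplus_k V^k$, the subspaces $V^{\geq k}:=\bigoplus_{j\geq k}V^j$ are $P$-stable since $N_P$ raises $\chi$-weight, and the successive quotients $V^{\geq k}/V^{\geq k+1}\cong V^k$ are $M$-modules whose direct sum reconstitutes $V|_M$. Twisting by $\E_P$ yields a filtration of the vector bundle $\E(V)=\E_P\times^P V$ whose associated graded is $\bigoplus_k \E_M(V^k)=\E_M\times^M V=\E'(V)$.

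Then I would invoke the classical fact that on the Fargues--Fontaine curve the Harder--Narasimham polygon of a filtered vector bundle lies on or below the polygon of its associated graded. Applied to our filtration, this gives a pointwise inequality of Harder--Narasimham polygons of $\E(V)$ and $\E'(V)$ for every $V\in\Rep G$. Since $\nu_\E$ (resp.\ $\nu_{\E'}$) encodes the Harder--Narasimham data of all $\E(V)$ (resp.\ $\E'(V)$) through the weight theory of $G$, testing this polygon comparison against irreducible highest-weight representations translates into the desired dominance inequality $\nu_\E\preceq \nu_{\E'}$ in $X_*(A)_\Q^+$; this is exactly the kind of characterization of the slope vector encoded by Theorem~\ref{theoSchi}.

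For the ``in particular'' statement, if $\E_M$ is trivial then $\E'=\E_M\times^M G$ is trivial, so $\nu_{\E'}=0$; the main inequality combined with the dominance of $\nu_\E$ forces $\nu_\E=0$ (coroots lie in the derived part, and the positive-coroot cone meets the dominant chamber only at the origin, as one sees by pairing with fundamental weights). Functoriality of the first Chern class through the reduction $\E_P$ gives $c_1^G(\E)=c_1^M(\E_M)=0$ in $\pi_1(G)_\Gamma$, so by Proposition~\ref{prop_invariant_bundle} and the bijection $\kappa_G\colon B(G)_{basic}\xrightarrow{\sim}\pi_1(G)_\Gamma$ the $G$-bundle $\E$ corresponds to the neutral class and is therefore the trivial bundle. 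The main obstacle is the translation from the pointwise HN-polygon comparison (for all $V$) back to the dominance inequality of slope vectors; once this standard Tannakian input is granted, the rest of the argument is essentially the tautology that a filtered bundle is less unstable than its associated graded.
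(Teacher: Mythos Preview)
Your approach is correct in outline but takes a genuinely different route from the paper, and the translation step you flag as ``the main obstacle'' is indeed where the work lies.

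The paper proceeds directly through Theorem~\ref{theoSchi}(2). Given any reduction $\E_Q$ of $\E$ to a standard parabolic $Q$, it uses the generic relative position $W_Q\dot w W_P$ between the $Q$-reduction and the given $P$-reduction to manufacture a reduction $\E'_Q$ of $\E'$ to $Q$ having the \emph{same} degree vector $v$ (this is the content of the commutative diagram~(\ref{diagram in corollary})). Since $\nu_\E$ is the supremum of such $v$'s for $\E$ and each such $v$ also arises from a reduction of $\E'$, the inequality $\nu_\E\leq\nu_{\E'}$ follows at once. Your Tannakian argument instead reduces to the vector-bundle statement (which, as the paper remarks, is exactly Kedlaya's Lemma~3.4.17 for $\GL_n$) by filtering each $\E(V)$ so that its associated graded is $\E'(V)$; this is elegant and conceptually clean.

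The soft spot is your invocation of Theorem~\ref{theoSchi} for the translation ``HN polygon of $\E(V)\leq$ HN polygon of $\E'(V)$ for all $V$'' $\Rightarrow$ ``$\nu_\E\leq\nu_{\E'}$''. Theorem~\ref{theoSchi} is a statement about reductions, not about the Tannakian picture, so it does not supply this step directly. The translation does work, but it needs its own argument: one observes that the HN slopes of $\E(V_\lambda)$ are exactly $\{\langle\chi,\nu_\E\rangle:\chi\text{ a weight of }V_\lambda\}$ (since HN filtrations split on $X$), so the maximal slope is $\langle\lambda,\nu_\E\rangle$ for $\lambda$ dominant; the polygon inequality then gives $\langle\omega_\alpha,\nu_\E\rangle\leq\langle\omega_\alpha,\nu_{\E'}\rangle$ for each relative fundamental weight $\omega_\alpha$, and equality of total degrees forces the central parts of $\nu_\E,\nu_{\E'}$ to agree, whence $\nu_{\E'}-\nu_\E\in\Q_{\geq 0}\Delta_0^\vee$. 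Once you supply this, your proof is complete; the paper's approach simply sidesteps this translation by staying on the reduction side throughout.
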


\begin{remark}When $G=\mathrm{GL}_n$, this corollary is shown in \cite{Ked} Lemma 3.4.17.
\end{remark}

\begin{proof}[Proof of Corollary \ref{Coro_HN equality among reduction to M}]Suppose $\E_Q$ is a reduction of $\E$ to a standard parabolic subgroup $Q$.
Suppose \[s_P: X\rightarrow P\backslash\E,\  s_Q: X\rightarrow Q\backslash\E\] are the corresponding sections for $\E_P$ and $\E_Q$ respectively. Then the relative position map:
\begin{eqnarray*}Q\backslash G\times P\backslash G &\rightarrow& Q\backslash G/P= W_Q\backslash W/W_P\\
 (Q g_1, Pg_2)&\mapsto& Qg_1g_2^{-1}P\end{eqnarray*}
gives
\[X\stackrel{(s_Q, s_P)}{\longrightarrow}Q\backslash\E\times P\backslash\E\rightarrow W_Q\backslash W/W_P.\]
Let $W_Q\dot wW_P$ be the image of the generic point of $X$, where $\dot w$ is the minimal length representative of the double coset $W_Q\dot wW_P$. Let $X'\subseteq X$ be the preimage of $W_Q\dot w W_P$. It's an open subscheme of $X$.

 By Theorem~\ref{theoSchi}(2), the corollary follows from the following Claim.

\textit{Claim: $\E_Q$ induces a reduction $\E'_Q$ of $\E'$ to the parabolic subgroup $Q$ such that $\E_Q$ and $\E'_Q$ has the same vector $v$ as defined in Theorem~\ref{theoSchi}.}

Now it remains to prove the Claim. The composition of morphisms \[\E_P\rightarrow\E\stackrel{\times \dot w}{\rightarrow}{\E}\] induces a monomorphism
\[P\cap \dot w^{-1}Q\dot w\backslash \E_P\rightarrow Q\backslash\E.\] The pullback by the section $s_Q: X\rightarrow Q\backslash\E$ of this morphism gives a section of \[P\cap \dot w^{-1}Q\dot w\backslash (\E_{P})_{|X'}\rightarrow X'.\]

Combined with the natural morphism \[P\cap \dot w^{-1}Q\dot w\backslash \E_P\rightarrow P_M'\backslash\E'_{M}\] induced by the projection to the Levi quotient,  where $\E'_M=\E_P\times^P M$ and $P'_M=M\cap \dot w^{-1}Q\dot w$, we get a reduction $(\E'_{M, P_M'})_{|X'}$ of $(\E'_{M})_{|X'}$ to its standard parabolic subgroup $P'_M$. The composition of morphisms \[\E'_M\rightarrow\E'\stackrel{\times \dot w}{\rightarrow}{\E'}\] induces a morphism
\[(P'_M\backslash \E'_{M})_{|X'}\rightarrow (Q\backslash\E')_{|X'},\] and we get a reduction $\E'_{|X', Q}$ of $\E'_{|X'}$ to $Q$ induced from $\E'_{M, P_M'}$ via this morphism. Then the desired reduction $\E'_Q$ of the Claim is obtained by the application of
the valuative criterion of properness to $Q\backslash \E'\rightarrow X$ as $Q\backslash G$ is proper and $X$ is a Dedekind scheme.

Now it remains to show that $\E_Q$ and $\E'_Q$ has the same vector $v$ as defined in Theorem~\ref{theoSchi}. By the construction of $\E'_Q$, this results from the commutativity of the following diagram for any $\chi\in X^*(Q)$
\begin{eqnarray}\label{diagram in corollary}\xymatrix{P\cap \dot w^{-1}Q\dot w\ar[r]^-{\mathrm{ad} \dot w}\ar[d]_{\mathrm{pr}_M} &Q\ar[r]^{\chi} &\mathbb{G}_m\ar@{=}[d]\\ P'_M\ar[r]^{\mathrm{ad}\dot w} &Q\ar[r]^{\chi} &\mathbb{G}_m}
\end{eqnarray} where $\mathrm{pr}_M$ is restriction to $P\cap w^{-1}Qw$ of the projection of $P$ to its Levi component $M$.
\end{proof}

\begin{remark}Notations as in the above corollary. When $G=\mathrm{GL}_n$ and $M=\mathrm{GL}_{n_1}\times \mathrm{GL}_{n_2}$ with $n_1+n_2=n$, then $\E$ corresponds to a vector bundle of rank $n$ over $X$, and $\E\times^P M$ corresponds to a pair of a vector bundle of rank $n_1$ and a vector bundle of rank $n_2$ over $X$ which admits an extension by $\E$. Therefore the above corollary gives a necessary condition of whether a vector bundle over Fargues-Fontaine curve $X$ is an extension of two given vector bundles over $X$.
\end{remark}

The converse side of the corollary is the following conjecture.

\begin{conjecture}Let $P$ be a standard parabolic subgroup of $G$ with Levi component $M$. Let $\E$ be a $G$-bundle and $\E'_M$ be a semi-stable $M$-bundle with $\nu_{\E}\preceq \nu_{\E'}$ where $\E'=\E'_M\times^M G$. Suppose $\nu_{\E'_M}$ is $G$-anti-dominant. Then $\E$ has a reduction $\E_P$ to $P$ such that $\E_P\times^P M\simeq \E_M'$.
\end{conjecture}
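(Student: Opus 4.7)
The plan is to prove the conjecture by induction on $|\Delta_G \setminus \Delta_M|$, the semisimple rank of $G/M$, reducing the problem to the case where $P$ is a maximal standard parabolic. For general $P$, fix a simple root $\alpha \in \Delta_G \setminus \Delta_M$ and let $L \supsetneq M$ be the Levi of the standard parabolic $Q \supsetneq P$ obtained by adjoining $\alpha$. The strategy is to first reduce $\E$ to $Q$ (producing an $L$-bundle $\E_L$ with $\E_L \simeq \E'_M \times^M L$), and then apply the inductive hypothesis inside $L$ using the parabolic $P \cap L$ and the original $M$-bundle $\E'_M$. The main subtlety is that $\E'_M \times^M L$ is typically not semi-stable, so the first step is not a direct application of the conjecture and must be treated separately; this suggests bundling the inductive step together with an analysis of the Harder--Narasimhan reduction of $\E'_M \times^M L$ inside $L$, followed by an application of Lemma \ref{lemma_redction modfication}.

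For the base case of a maximal parabolic, one attempts a Tannakian reduction to $G = \GL_n$ by embedding $G \hookrightarrow \GL(V)$ along a faithful representation and transferring reductions along the morphism $\Bun_G \to \Bun_{\GL(V)}$. In the setting $G = \GL_n$, $M = \GL_{n_1} \times \GL_{n_2}$, with semi-stable $\V_1, \V_2$ of slopes $\mu_1 \leq \mu_2$, the conjecture becomes the existence of a short exact sequence $0 \to \V_1 \to \E \to \V_2 \to 0$. The required injection $\V_1 \hookrightarrow \E$ should be produced using the Fargues--Fontaine decomposition $\E \simeq \bigoplus_j \cO(\lambda_j)$ together with the fact that $\Hom(\cO(\alpha), \cO(\beta)) \neq 0$ whenever $\beta \geq \alpha$. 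The slope hypothesis $\nu_\E \preceq \nu_{\E'}$ and the anti-dominance $\mu_1 \leq \mu_2$ should together provide enough summands in $\E$ of slope at least $\mu_1$ to accommodate $\V_1$; the isomorphism of the cokernel with $\V_2$ then follows by a degree count and the classification result that a semi-stable vector bundle of given slope and rank is unique up to isomorphism on the Fargues--Fontaine curve.

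The hardest part will be the case where $\E$ is itself semi-stable with slope strictly between $\mu_1$ and $\mu_2$, so that no sub-bundle of slope $\mu_1$ arises from the Harder--Narasimhan filtration of $\E$. Realizing such an $\E$ as a non-split extension $0 \to \V_1 \to \E \to \V_2 \to 0$ is a genuinely non-HN construction, requiring that $\mathrm{Ext}^1(\V_2, \V_1)$ is large enough to cover the isomorphism class of $\E$. The non-vanishing of $\mathrm{Ext}^1(\cO(\mu_2), \cO(\mu_1))$ when $\mu_1 < \mu_2$ provides the space of extensions, and a careful check that the Newton polygon identity implied by $\nu_\E \preceq \nu_{\E'}$ singles out the correct isomorphism class among these extensions should complete the argument. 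Extending this analysis from $\GL_n$ back to general reductive $G$ is likely to require a Tannakian argument combined with an explicit descent of the constructed filtration to a $P$-reduction, which is the most delicate part of the strategy.
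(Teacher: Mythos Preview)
This statement is labeled a \emph{Conjecture} in the paper and is not proved there. The paper only remarks that the $\GL_n$ case is established in \cite{BFH} and that the general case has recently been proved by Viehmann in \cite{Vi}. So there is no proof in the paper to compare against; what follows is an assessment of your outline on its own merits.

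The principal gap is the Tannakian reduction from general $G$ to $\GL_n$. Pushing a $G$-bundle $\E$ forward along a faithful representation $G\hookrightarrow\GL(V)$ produces a vector bundle, but a reduction of that vector bundle to a parabolic of $\GL(V)$ has no reason to arise from a reduction of $\E$ to a parabolic of $G$. Parabolic reductions are not functorial in the way that bundles themselves are: a flag in $V$ compatible with the pushed-forward data need not descend to a section of $P\backslash\E\to X$. Indeed, if such a descent argument worked, the conjecture for general $G$ would have followed immediately from \cite{BFH}, and the paper would not have left it open. Viehmann's proof in \cite{Vi} does not proceed this way.

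Even within $\GL_n$ your sketch of the hard case is incomplete. When $\E$ is semi-stable of slope strictly between $\mu_1$ and $\mu_2$, knowing that $\mathrm{Ext}^1(\mathcal{O}(\mu_2),\mathcal{O}(\mu_1))\neq 0$ only tells you that \emph{some} non-split extension exists; it does not tell you that the particular bundle $\E$ is realized by such an extension. What is needed is a surjectivity statement: the map from the space of extensions to the set of isomorphism classes of bundles with the prescribed Newton polygon hits $[\E]$. This is the actual content of \cite{BFH}, and it requires a substantially more detailed analysis (dimension counts on Banach--Colmez spaces, or explicit constructions) than a degree/slope bookkeeping argument. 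Your inductive step for non-maximal $P$ is plausible in spirit but, as you note yourself, the intermediate $L$-bundle $\E'_M\times^M L$ is generally not semi-stable, so the first reduction is not an instance of the conjecture and you have not said how to produce it.
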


\begin{remark}When $G=\mathrm{GL}_n$, the conjecture is proved in \cite{BFH}. Recently, this conjecture is proved by Viehmann in \cite{Vi}. Note that in the conjecture the $G$-anti-dominant assumption for $\nu_{\E'_M}$ is necessary since $H^1(X, \mathcal{O}(\lambda))=0$ for $\lambda\geq 0$. The assumption that $\E'_M$ is semi-stable (i.e. which corresponds to a basic element in $B(M)$) is also necessary which will be explained in a joint work in preparation by Jilong Tong and the author.
\end{remark}

Let $b_M$ be a reduction of $b$ to $M$, where $M$ is a standard Levi subgroup of $G$. Let $P$ be the standard parabolic subgroup with Levi component $M$.
Recall that for any $w\in W$,  there is an affine fibration
\[\mathrm{pr}_{w}:\Fc(G, \mu)(C)^{w}:=P(C)wP_{\mu}(C)/P_{\mu}(C)\ra \Fc(M,w\mu)(C)\] by projection to the Levi quotient.  We have the following fact.
\begin{lemma}[\cite{CFS} Lemma 2.6]\label{lemma:reduction to P modification}
For $x\in P(C)w P_{\mu}(C)/P_{\mu}(C)$ there is an isomorphism
$$
(\E_{b,x})_P \times^P M \simeq \E_{b_M,\mathrm{pr}_w (x)}.
$$ where $b_M$ is a reduction of $b$ to $M$ and $(\E_{b,x})_P$ is the reduction of $\E_{b, x}$ to $P$ induced by the reduction $\E_{b_M}\times^M P$ of $\E_b$ to $P$.
\end{lemma}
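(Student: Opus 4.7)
The plan is to identify $(\E_{b,x})_P\times^P M$ as a modification of $\E_{b_M}$ at $\infty$ and then compute its gluing datum in the $\mathrm{B_{dR}}$-affine Grassmannian of $M$; the case of a general quasi-split $G$ reduces to $G=\GL_n$ by Tannakian formalism, using the $(M_e,M_{dR},u)$-description of Proposition \ref{prop_classification_recollement}. First, Lemma \ref{lemma_redction modfication} says that the reduction $(\E_{b,x})_P$ corresponding to $\E_{b_M}\times^M P$ agrees with $\E_{b_M}\times^M P$ on $X\setminus\{\infty\}$; applying $\times^P M$, the $M$-bundle $(\E_{b,x})_P\times^P M$ therefore agrees with $\E_{b_M}$ on $X\setminus\{\infty\}$ and is hence a modification of $\E_{b_M}$ at $\infty$. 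Via (\ref{dR_modification}) for the group $M$, this modification is classified by some $y\in\Gr^{\mathrm{B_{dR}}}_M(C)$, and the goal is to show $y=\pi_{M,w\mu}^{-1}(\mathrm{pr}_w(x))$.

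To compute $y$ I work locally at $\infty$. Trivialize $\E_b$ on the formal disc at $\infty$ compatibly with the $P$-reduction $\E_{b_M}\times^M P$, so that the reduction corresponds to the basepoint $eP\in(P\backslash G)(\BpdR)$. Since $\mu$ is minuscule and $x\in P(C)wP_\mu(C)/P_\mu(C)$, a representative $p\in P(C)$ of the $P$-coset piece of $x$ lifts to $\tilde p\in P(\BpdR)$, and an integral lift $\dot w\in G(\mathcal{O}_F)\subseteq G(\BpdR)$ of $w\in W$ yields
\[\pi_{G,\mu}^{-1}(x)=\tilde p\,\dot w\,\mu(t)^{-1}\,G(\BpdR)\in\Gr^{\mathrm{B_{dR}}}_{G,\mu}(C).\]
The pivot identity $\dot w\,\mu(t)^{-1}=(w\mu)(t)^{-1}\,\dot w$ in $G(\BdR)$, where $w\mu$ is viewed as a cocharacter of $M$, rewrites this gluing element as $\tilde p\,(w\mu)(t)^{-1}\,\dot w$, with $\dot w$ absorbed into the ``integral'' factor $G(\BpdR)$.

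Finally, naturality of the Beauville-Laszlo construction with respect to reduction of structure group along $P\twoheadrightarrow M$ shows that the gluing datum of $(\E_{b,x})_P\times^P M$ at $\infty$ is the image in $M(\BdR)/M(\BpdR)$ of $\tilde p\,(w\mu)(t)^{-1}$, namely $\bar p\,(w\mu)(t)^{-1}\,M(\BpdR)\in\Gr^{\mathrm{B_{dR}}}_{M,w\mu}(C)$; under the Bialynicki-Birula isomorphism $\pi_{M,w\mu}$ this class maps to the image of $\bar p\in M(C)$ in $\Fc(M,w\mu)(C)$, which by definition of the affine fibration $\mathrm{pr}_w$ is precisely $\mathrm{pr}_w(x)$. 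This gives $y=\pi_{M,w\mu}^{-1}(\mathrm{pr}_w(x))$, as wanted. In the Tannakian reduction to $\GL_n$, the $P$-reduction becomes a filtration on $(M_e,M_{dR},u)$ compatible via $u$, the modification by $x$ replaces $M_{dR}$ by the appropriate lattice $M_x$, and passing to the Levi amounts to taking the associated graded, at which point the above coset calculation is straightforward. The main obstacle is the projection step: the Weyl representative $\dot w$ need not lie in $M$ when $w\notin W_M$, and the identity $\dot w\,\mu(t)^{-1}=(w\mu)(t)^{-1}\,\dot w$ is precisely what is needed to absorb $\dot w$ into the $\BpdR$-factor so that only the $w\mu$-modification survives on the Levi side.
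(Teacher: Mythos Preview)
The paper does not give its own proof of this lemma; it is simply quoted from \cite{CFS} Lemma 2.6. So there is no argument in the present paper to compare against.

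Your proof is correct and follows the natural line. The two essential points are exactly the ones you isolate: first, Lemma~\ref{lemma_redction modfication} shows $(\E_{b,x})_P$ agrees with $\E_{b_M}\times^M P$ away from $\infty$, so after pushing to $M$ one has a modification of $\E_{b_M}$; second, the computation of the gluing datum at $\infty$ reduces to the coset identity $\dot w\,\mu(t)^{-1}=(w\mu)(t)^{-1}\dot w$ together with $\dot w\in G(\BpdR)$, so that on the $P$-side the modification is by $\tilde p\,(w\mu)(t)^{-1}\in P(\BdR)$ and its image in $M(\BdR)/M(\BpdR)$ is $\bar p\,(w\mu)(t)^{-1}M(\BpdR)$, which under $\pi_{M,w\mu}$ is $\mathrm{pr}_w(x)$. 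One small remark: the Tannakian reduction to $\GL_n$ is not really needed, since every step (Beauville--Laszlo gluing, reduction of structure group along $P\to M$, the Bialynicki--Birula map) is already formulated group-theoretically for arbitrary $G$; invoking it is harmless but you could streamline by dropping it.
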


\subsection{Weakly admissible locus}\label{section_wa}
Recall that $\{\mu\}$ is a geometric conjugacy class of a minuscule cocharacter $\mu: \mathbb{G}_m\rightarrow G_{\bar F}$. After choosing a suitable representative in $\{\mu\}$, we may assume $\mu\in X_*(T)^+$ via inner twisting, where $^+$ stands for the dominant cocharacters. We consider the adic space $\mathcal{F}(G, \mu)$ associated to the flag variety over $\mathrm{Spa}(\breve E)$. For $b\in G(\breve F)$, Rapoport and Zink has defined a weakly admissible locus
\[\mathcal{F}(G, \mu, b)^{wa}\subseteq \mathcal{F}(G, \mu)\] associated to $(G, \mu, b)$. Now we recall its definition.

Let $L|\breve F$ be a complete field extension. For any $x\in \mathcal{F}(G, \mu)(L)$, we can associate a cocharacter $\mu_{x}\in \{\mu\}$ defined over $L$. Let $\varphi-\mathrm{FilMod}_{L|\breve F}$ be the category of filtered isocrystals over $L|\breve F$. There is a functor
\[\begin{split}\mathcal{I}_{b, x}: \mathrm{Rep} G &\longrightarrow \varphi-\mathrm{FilMod}_{L/\breve F}\\ (V,\rho) &\mapsto (V_{\breve F}, \rho(b)\sigma, \mathrm{Fil}^{\bullet}_{\rho\circ \mu_x}V_{L})
\end{split}\]

The pair $(b, x)$ is called weakly admissible if for any $(V, \rho)\in \mathrm{Rep} G$, the filtered isocrystal $\mathcal{I}_{b, x}(V, \rho)$ is weakly admissible in the sense of Fontaine. More precisely, a filtered isocrystal $\mathcal{V}=(V, \varphi, \mathrm{Fil}^{\bullet}V_L)$ over $L|\breve F$ is called weakly admissible if for any subobject $\mathcal{V}'$ of $\mathcal{V}$ with filtration induced from $\mathcal{V}$, we have
\[t_H(\mathcal{V})=t_N(\mathcal{V}) \text{ and } t_H(\mathcal{V}')\leq t_N(\mathcal{V}'),\] where
$t_N(\mathcal{V})$ the $\pi_F$-adic valuation of $\det\varphi$ and
\[t_H(\mathcal{V}):=\sum_{i\in\Z} i\cdot \dim_L \mathrm{gr}^i_{\mathrm{Fil}^{\bullet}}(V_L).\]

 Let
\[\mathcal{F}(G, \mu, b)^{wa}(L):=\{x\in \mathcal{F}(G, \mu)(L)| (b, x) \text{ is weakly admissible}\}.\]

This defines the weakly admissible locus $\mathcal{F}(G, \mu, b)^{wa}$ which is a partially proper open subspace inside $\mathcal{F}(G, \mu)$ by \cite{RZ} Proposition 1.36.

\begin{remark}\label{remark_reduction wa to qs}\begin{enumerate}\item Let $b, b'\in G(\breve F)$ with  $[b]=[b']\in B(G)$,  then $\mathcal{F}(G, \mu, b)^{wa}\simeq \mathcal{F}(G, \mu, b)^{wa}$.
\item By \cite{RV} Proposition 3.1,  $\mathcal{F}(G, \mu, b)^{wa}$ is non-empty if and only if $[b]\in A(G,\mu)$.
\item Suppose the Frobenius maps on $H(\breve F)$ maps to $g^{-1}\sigma(-)g$ via the inner twisting $H_{\breve F}\stackrel{\sim}{\rightarrow} G_{\breve F}$ with $g\in G(\breve F)$. We have a bijection $B(G)\stackrel{\sim}{\rightarrow} B(H)$ which maps $[b]$ to $[b^H]$ where $b^H$ maps to $bg\in G(\breve F)$ via the inner twisting. By \cite{DOR} Proposition 9.5.3,  there is an identification \[ \mathcal{F}(G, \mu, b)^{wa}=\mathcal{F}(H, \mu, b^H)^{wa}.\]
    Therefore for the study of weakly admissible locus, it suffices to reduce to the quasi-split case.
\end{enumerate}
\end{remark}

In the following proposition, we will use the modification of $G$-bundles on the curve $X$ to give an equivalent definition of the weakly admissibility of a pair $(b, x)$ when $G$ is quasi-split.

\begin{proposition}\label{proposition_wa}[\cite{CFS} Proposition 2.7]
Assume that $G$ is quasi-split. Let $[b]\in A(G, \mu)$ and $x\in \mathcal{F}(G,\mu)(L)$. Then the pair $(b, x)$ is weakly admissible if and only if for any standard parabolic $P$ with Levi component $M$, any reduction $b_M$ of $b$ to $M$, and any $\chi\in X^\ast(P/Z_G)^+$ where $Z_G$ is the center of $G$, we have \[\deg\chi_\ast (\E_{b,x})_P\leq 0,\] where $(\E_{b, x})_P$ is the reduction to $P$ of $\E_{b, x}$ induced by the reduction $\E_{b_M}\times^{M}P$ of $\E_b$ by Lemma \ref{lemma_redction modfication}.
\end{proposition}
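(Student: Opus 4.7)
The plan is to identify the Fontaine-style weak admissibility condition with a collection of degree conditions on line bundles over the Fargues–Fontaine curve $X$, using two standard ingredients: the reduction of weak admissibility for a reductive group to rank-one tests indexed by triples $(P, \chi, b_M)$, and the identity $\deg \E(\V) = t_H(\V) - t_N(\V)$ for the line bundle on $X$ associated to a rank-one filtered isocrystal $\V$ via Fontaine's construction.

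First, I would invoke the standard group-theoretic reformulation of weak admissibility (cf.\ \cite{RZ}~1.18, and the reductive group version in \cite{DOR}): the filtered isocrystal $\I_{b, x}(V)$ is weakly admissible for every $V \in \Rep G$ if and only if for every standard parabolic $P$ of $G$ with Levi $M$, every reduction $b_M$ of $b$ to $M$, and every $\chi \in X^*(P/Z_G)^+$, the rank-one filtered isocrystal obtained by pushing out along $\chi$ satisfies $t_H \leq t_N$. The top-level equality $t_H = t_N$ required on $\I_{b, x}(V)$ restricts, on a character $\chi \in X^*(G)$, to the identity $\langle \chi, \mu \rangle = \langle \chi, \nu_b \rangle$, which holds automatically under the hypothesis $[b] \in A(G, \mu)$, since $\mu^\sharp - \kappa_G(b)$ then lies in $\pi_1(G)_{\Gamma, \tr{tor}}$ and pairs trivially with any integral character of $G$.

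Next, I would identify the rank-one filtered isocrystal produced in the previous step with the line bundle $\chi_* (\E_{b, x})_P$ on $X$. The reduction $\E_{b_M} \times^M P$ of $\E_b$ transports, by Lemma \ref{lemma_redction modfication}, to a reduction $(\E_{b, x})_P$ of $\E_{b, x}$; pushing forward via $\chi : P \rightarrow \G_m$ yields a line bundle on $X$. By functoriality of the Beauville–Laszlo construction of $\E_{b, x}$ (Proposition \ref{prop_classification_recollement} together with (\ref{dR_modification})), this line bundle is precisely the one that the Fontaine functor associates to the rank-one filtered isocrystal with Frobenius $\chi(b_M)\sigma$ and filtration $\mathrm{Fil}^\bullet_{\chi \circ \mu_x} L$ appearing in the first step. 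Its degree then equals $t_H - t_N$ of that filtered isocrystal; one may verify this computation directly using Proposition \ref{prop_invariant_bundle}, obtaining $\deg \chi_* (\E_{b, x})_P = \langle \chi, \mu^\sharp \rangle - \langle \chi, \kappa_M(b_M) \rangle$.

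Combining the two steps, the rank-one inequality $t_H \leq t_N$ translates exactly into $\deg \chi_* (\E_{b, x})_P \leq 0$, so that both directions of the equivalence follow from the iff in step one. The main obstacle is the line-bundle identification of step two: one has to check carefully that pushing forward the parabolic reduction via $\chi$ produces the correct Beauville–Laszlo modification at $\infty$. This reduces, via the Bialynicki–Birula description of $\Fc(G, \mu)$ as a subset of $\Gr^{\BdR}_G(C)$, to matching the cocharacter $\mu_x$ with the Hodge filtration on the representation underlying $\chi$, an essentially formal computation once one has the correct dictionary between filtered isocrystals and modifications on $X$.
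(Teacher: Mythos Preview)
The paper does not supply its own proof of this proposition: it is stated with the attribution ``[\cite{CFS} Proposition 2.7]'' and no argument is given in the text. So there is nothing in the paper to compare against directly.

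That said, your outline is the expected argument and is essentially what one finds in \cite{CFS} (and before that, in \cite{DOR} Chapter 9 and \cite{RZ} 1.18 for the filtered-isocrystal side). The two-step structure---reduce Fontaine's weak admissibility to a family of rank-one tests indexed by $(P,b_M,\chi)$, then identify each rank-one test with the degree of the line bundle $\chi_*(\E_{b,x})_P$ via the formula $\deg \E(\V)=t_H(\V)-t_N(\V)$---is correct and complete in outline.

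One small caution on your explicit formula: writing $\deg \chi_*(\E_{b,x})_P = \langle \chi,\mu^\sharp\rangle - \langle \chi,\kappa_M(b_M)\rangle$ is a bit glib. The Hodge number $t_H$ of the rank-one filtered isocrystal depends on the relative position of the filtration $\mathrm{Fil}^\bullet_{\mu_x}$ and the parabolic reduction determined by $b_M$, not just on the conjugacy class $\{\mu\}$. What is true is that the degree equals $t_H-t_N$ of the pushed-forward rank-one object, and it is this quantity (not an expression purely in $\mu^\sharp$) that one compares with zero. Your step-two identification via Beauville--Laszlo and the Bialynicki--Birula map is exactly what makes this precise, so the argument goes through; just be careful not to overstate the formula.
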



\subsection{Admissible locus}Rapoport and Zink has conjectured in \cite{RZ} the existence of an open subspace
\[ \Fc(G, \mu, b)^a\subseteq \Fc(G, \mu, b)^{wa} \] with a \'etale-$G$-local system $\mathcal{L}$ on $\Fc(G, \mu, b)^a$ such that these two spaces have the same classical points and the $G$-local system $\mathcal{L}$ interpolates a family of crystalline representations with value in $G(F)$.

When the local Shimura datum $(G,\mu, b)$ corresponds to a Rapoport-Zink space $\mathcal{M}(G, \mu, b)$ over $\breve E$, then the admissible locus $\Fc(G, \mu, b)^a$ is the image of the $p$-adic period mapping (\cite{RZ} Chapter 5)
\[\breve{\pi}: \mathcal{M}(G, \mu, b)\rightarrow \Fc(G, \mu), \] and the $G$-local system $\mathcal{L}$ corresponds to the Tate module of the universal $p$-divisble group with $G$-structures by descent via the $p$-adic period mapping.

For the general local Shimura datum $(G,\mu, b)$, the existence of the admissible locus is due to the work of Fargues-Fontaine \cite{FF}, Kedlaya-Liu \cite{KL} and Scholze \cite{Sch1}.

\begin{definition} Let $\Fc(G,\mu, b)^{a}$ be a subspace of $\Fc(G, \mu)$ stable under generalization with $C$-points defined as follows:
\[\Fc(G, \mu, b)^{a}(C)=\{x\in\Fc(G, \mu)(C)| \nu_{\E_{b, x}} \text{ is trivial }\}\] for any complete algebraically closed field $C$ over $F$.
\end{definition}

\begin{remark}
\begin{enumerate}
\item $\Fc(G, \mu, b)^{a}$ is an open subset of $\Fc(G, \mu)$ (\cite{KL}), and by definition
\[\Fc(G, \mu, b)^{a}\subset\Fc(G, \mu, b)^{wa}.\]Moreover,
\[\Fc(G, \mu, b)^{a}(K)=\Fc(G, \mu, b)^{wa}(K)\] for any finite extension $K$ over $\breve E$ (\cite{Ra2} A.5, \cite{CoFo}). In particular, $\Fc(G, \mu, b)^{a}\neq \emptyset$ if and only if $[b]\in A(G, \mu)$.
\item For $[b]\in A(G, \mu)$, the admissible locus $\Fc(G, \mu, b)^a$ coincides with the image of the $p$-adic period mapping from the local Shimura variety attached to $(G, \mu,b)$ to the flag variety $\Fc(G,\mu)$ (\cite{Sch}, \cite{SW}). It also coincides with the construction of admissible locus of Hartl \cite{Har} and Faltings \cite{Fal} when $(G, \mu, b)$ is a Hodge type local Shimura datum.
\item Via the bijection $B(G)\stackrel{\sim}{\rightarrow} B(H)$ by inner twisting which maps $[b]$ to $[b^H]$, there is an identification \[ \mathcal{F}(G, \mu, b)^{a}=\mathcal{F}(H, \mu, b^H)^{a}.\]
     Therefore for the study of admissible locus, we can also reduce to the quasi-split case.
\end{enumerate}
\end{remark}

For any $[b]\in B(G)$, consider the Newton stratification:
\[\Fc(G, \mu)=\coprod_{[b']\in B(G)} \Fc(G, \mu, b)^{[b']}\]
where $\Fc(G, \mu, b)^{[b']}$ is a subspace of $\Fc(G, \mu)$ stable under generalization with $C$-points defined by  \[\Fc^{[b']}(C)=\{x\in \Fc(C)|\E_{b,x}\simeq \E_{b'}\}\] for any complete algebraically closed field $C$ over $F$. Note that when $[b]$ is basic, we know precisely which strata show up in the Newton stratification by Proposition \ref{prop:description modification basique}. But it's unknown for non-basic $[b]$. Each stratum in the Newton stratification is locally closed by Kedlaya-Liu \cite{KL}. And it's clear that \[\Fc(G, \mu, b)^{[1]}=\Fc(G, \mu, b)^a.\]

\begin{remark}The $\tilde{J}_b(K)$-action on $\Fc(G, \mu)(C)$ induces an action on each stratum $\Fc(G, \mu, b)^{[b']}(C)$. In particular, $\tilde{J}_b(K)$ acts on $\Fc(G, \mu,b)^a(C)$.
\end{remark}

\section{Hodge-Newton-decomposability}\label{section_HN-decomposable}
Let $M_b=\mathrm{Cent}_H([\nu_b])$ be the centralizer of $[\nu_b]$.

\begin{definition}\begin{enumerate}
\item A triple $(G, \mu, b)$ (resp. $(G, \delta, b)$ with $\delta\in X_*(A)^+_{\Q}$) is called Hodge-Newton-decomposable (or HN-decomposable for short) if $[b]\in A(G,\mu)$ (resp. $[b]\in B(G, \e, \delta)$ with $\e=\kappa_G(b)\in \pi_1(G)_{\Gamma}$) and there exists a strict standard Levi subgroup $M$ of the quasi-split inner form $H$ of $G$ containing $M_b$, such that $\mu^{\diamond}-[\nu_b]\in\langle\Phi^\vee_{0,M}\rangle_{\Q}$ (resp. $\delta-[\nu_b]\in\langle\Phi^\vee_{0,M}\rangle_{\Q}$). Otherwise, the triple $(G, \mu, b)$ (resp. $(G, \delta, b)$) is called Hodge-Newton-indecomposable (or HN-indecomposable for short).
\item A pair $(G, \mu)$ is called fully Hodge-Newton decomposable (or fully HN-decomposable for short) if for any non-basic $[b]\in B(G,\mu)$, the triple $(G, \mu, b)$ is HN-decomposable. In this case, we also say the Kottwitz set $B(G,\mu)$ is fully Hodge-Newton decomposable.
\item The generalized Kottwtiz set $B(G, \e, \delta)$ is called fully HN-deomposable, if for any non-basic $[b]\in B(G,\e, \delta)$, the triple $(G, \delta, b)$ is HN-decomposable.
\end{enumerate}
\end{definition}

\begin{remark}The notion of fully Hodge-Newton decomposability is first introduced and systematically studied by G\"ortz, He and Nie in \cite{GoHeNi}. They give equivalent conditions for a pair $(G, \mu)$ to be fully HN-decomposable and classify all such pairs.
\end{remark}

In the quasi-split case, we have the following equivalent definition for the HN-decomposability.
\begin{lemma}\label{lemma_HN-decomp}[comp. \cite{CFS} Lemma 4.11]Suppose $G$ is quasi-split. Let $[b]\in B(G, \mu^{\sharp}+\e, \mu^{\diamond})$ for some $\e\in \pi_1(G)_{\Gamma, tor}$. Then the following three conditions are equivalent:
\begin{enumerate}
\item the triple $(G, \mu, b)$ is HN-decomposable,
\item there exist a strict standard Levi subgroup $M$ containing $M_b$ and a unique element $\e_M\in \pi_1(M)_{\Gamma, tor}$ such that $[b_M]\in B(M,\mu^{\sharp}+\e_M, \mu^{\diamond})$ and $\e_M$ maps to $\e$ via the natural map $\pi_1(M)_{\Gamma}\rightarrow \pi_1(G)_{\Gamma}$, where $b_M$ is the reduction of $b$ to $M$ deduced from its canonical reduction to $M_b$ combined with the inclusion $M_b\subseteq M$,
\item there exist a strict standard Levi subgroup $M$ containing $w_0M_bw_0^{-1}$ and a unique element $\tilde \e_M\in \pi_1(M)_{\Gamma, tor}$ such that $[\tilde{b}_M]\in B(M,(\tilde{w}_0\mu)^{\sharp}+\tilde{\e}_M, (\tilde{w}_0\mu)^{\diamond})$ and $w_0\tilde{\e}_M$ maps to $\e$ via the natural map $\pi_1(M)_{\Gamma}\rightarrow \pi_1(G)_{\Gamma}$,  where $\tilde{b}_M$ is the reduction of $b$ to $M$ deduced from its canonical reduction $w_0b_{M_b}w_0^{-1}$ to $w_0M_bw_0^{-1}$ combined with the inclusion $w_0M_bw_0^{-1}\subseteq M$ and $\tilde w_0\mu=(w_0\mu)_{M-dom}$ is the $M$-dominant representative in $W_Mw_0\mu$.
\end{enumerate}
\end{lemma}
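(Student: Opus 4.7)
The plan is to establish $(1) \Leftrightarrow (2)$ directly, then deduce $(1) \Leftrightarrow (3)$ by the analogous argument after conjugation by the longest Weyl element $w_0$. This follows the strategy of \cite{CFS} Lemma 4.11, with additional bookkeeping for the torsion twist $\epsilon_M$ inherent to the generalized Kottwitz set.

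For $(1) \Rightarrow (2)$, I pick the strict standard Levi $M \supseteq M_b$ provided by HN-decomposability. The canonical reduction $b_M := b_{M_b}$, viewed in $M(\breve F)$, satisfies $[\nu_{b_M}]_M = [\nu_b]$ since $[\nu_b]$ is $G$-dominant hence $M$-dominant. Combining $\mu^\diamond - [\nu_b] \in \langle \Phi_{0, M}^\vee\rangle_\Q$ with $\mu^\diamond - [\nu_b] \in \Q_{\geq 0}\Phi_0^+$ (from $[b] \in A(G, \mu)$) forces $\mu^\diamond - [\nu_b] \in \Q_{\geq 0}\Phi_{0, M}^+$ via the standard-Levi identity $\Q_{\geq 0}\Phi_0^+ \cap \langle\Phi_{0, M}^\vee\rangle_\Q = \Q_{\geq 0}\Phi_{0, M}^+$, and hence $[\nu_{b_M}]_M \leq \mu^\diamond$ in the $M$-order. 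Setting $\epsilon_M := \kappa_M(b_M) - \mu^\sharp \in \pi_1(M)_\Gamma$, functoriality of $\kappa$ gives its image in $\pi_1(G)_\Gamma$ equals $\kappa_G(b) - \mu^\sharp = \epsilon$. For the torsion property, the basic-element formula applied to $b_{M_b}$ in $M_b$ yields that $\mathrm{Av}_\Gamma \mathrm{Av}_{W_{M_b}}(\widetilde{\kappa_{M_b}(b_{M_b})}) = [\nu_b]$ and $\mathrm{Av}_\Gamma \mathrm{Av}_{W_M}\, \mu = \mu^\diamond$, so the image of $\epsilon_M$ in $\pi_1(M)_{\Gamma, \Q}$, after Weyl- and Galois-averaging, reduces to $[\nu_b] - \mu^\diamond \in \langle \Phi_{0, M}^\vee\rangle_\Q$, which vanishes because relative $M$-coroots are trivial in $\pi_1(M)_{\Gamma, \Q}$. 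Uniqueness of $\epsilon_M$ is immediate from its definition and compatible with the injectivity $\pi_1(M)_{\Gamma, tor} \hookrightarrow \pi_1(G)_{\Gamma, tor}$ from Lemma \ref{lemma_BGmu reduction Levi}(1).

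For $(2) \Rightarrow (1)$, Lemma \ref{lemma_BGmu} (with $M_b \subseteq M$ and the basic reduction $b_M = b_{M_b}$) gives $[\nu_{b_M}]_M \leq \mu^\diamond$ in the $M$-order, which together with $[\nu_{b_M}]_M = [\nu_b]$ yields $\mu^\diamond - [\nu_b] \in \Q_{\geq 0}\Phi_{0, M}^+ \subseteq \langle\Phi_{0, M}^\vee\rangle_\Q$. Since $M$ is strict by hypothesis, HN-decomposability follows. The equivalence $(1) \Leftrightarrow (3)$ is then handled by the same argument after conjugation by $w_0$: this sends $M_b$ to $w_0 M_b w_0^{-1}$ and the canonical reduction $b_{M_b}$ to $w_0 b_{M_b} w_0^{-1}$, while $\tilde{w}_0 \mu$ is by definition the $M$-dominant representative of $w_0 \mu$; the Weyl twist on $\pi_1(G)_\Gamma$ accounts for the appearance of $w_0 \tilde{\epsilon}_M \mapsto \epsilon$ in $(3)$. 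The main obstacle will be the careful bookkeeping of $\epsilon_M$ through $\pi_1(M)_\Gamma \to \pi_1(M)_{\Gamma, \Q}$, relying on the vanishing of relative $M$-coroots in the rationalization together with the basic-element formula relating Newton and Kottwitz.
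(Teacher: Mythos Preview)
Your proposal is correct and follows essentially the same route as the paper: establish $(1)\Leftrightarrow(2)$ in the spirit of \cite{CFS} Lemma~4.11, then obtain the equivalence with $(3)$ via conjugation by $w_0$. The only cosmetic difference is that where the paper invokes Remark~\ref{AGmu} (the decomposition $A(M,\mu)=\coprod_{\e_M}B(M,\mu^\sharp+\e_M,\mu^\diamond)$) to produce a torsion $\e_M$, you instead define $\e_M:=\kappa_M(b_M)-\mu^\sharp$ and verify torsion by hand; both arguments then appeal to Lemma~\ref{lemma_BGmu reduction Levi}(1) for uniqueness. One small wording issue: the identity you write as $\mathrm{Av}_\Gamma\mathrm{Av}_{W_M}\mu=\mu^\diamond$ is not literally true in $X_*(T)_\Q$, but what you actually need (and use) is that $\mu$ and $\mu^\diamond$ have the same image in $\pi_1(M)_{\Gamma,\Q}$, which is correct.
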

\begin{proof}The proof of the equivalence of (1) and (2) is similar as that of \cite{CFS} Lemma 4.11. Indeed, by definition, $(G, \mu, b)$ is HN-decomposable if and only if $[b_M]\in A(M, \mu)$ for some strict standard Levi subgroup of $G$. By Remark \ref{AGmu}, we may assume $[b_M]\in B(M,\mu^{\sharp}+\e_M, \mu^{\diamond})$ for some $\e_M\in \pi_1(M)_{\Gamma, tor}$ which maps to $\e\in \pi_1(G)_{\Gamma}$. The uniqueness of $\e_M$ follows from Lemma \ref{lemma_BGmu reduction Levi}(1).

The equivalence between (2) and (3) is due to the fact that there is a bijection between $B(M,\mu^{\sharp}+\e_M, \mu^{\diamond})$ and $B(w_0Mw_0^{-1}, (\tilde{w}_0\mu)^{\sharp}+w_0\e_M, (\tilde{w}_0\mu)^{\diamond})$ induced by the conjugation by $w_0$. \end{proof}

\begin{corollary}\label{coro_HN-indecomp}Suppose $G$ is quasi-split. Let $[b]\in A(G, \mu)$. Then the following three conditions are equivalent:
\begin{enumerate}
\item the triple $(G, \mu, b)$ is HN-indecomposable,

\item  $\mu^{\sharp}-\kappa_{M_b}(b_{M_b})\in \pi_1(M_b)_{\Gamma}\otimes\Q$ is a linear combination of \[\{\alpha^{\vee,\sharp}\in \pi_1(M_b)_{\Gamma}\otimes \Q|\alpha\in\Delta_0, \langle\alpha, [\nu_b]\rangle >0\}\] with coefficients of positive integers,
\item for all $\alpha\in\Delta_0$, $\langle\alpha, [\nu_b]\rangle >0$ implies the coefficient of $\alpha^\vee$ in $\mu^{\diamond}-[\nu_b]$ is positive.
\end{enumerate}
\end{corollary}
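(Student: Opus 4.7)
The plan is to prove $(1)\Leftrightarrow(3)$ directly from the definition, and then to deduce $(3)\Leftrightarrow(2)$ by projecting everything to $\pi_1(M_b)_{\Gamma,\Q}$.

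For $(1)\Leftrightarrow(3)$: Since $[b]\in A(G,\mu)$, one has $[\nu_b]\leq \mu^\diamond$, so I would write
\[\mu^\diamond-[\nu_b]=\sum_{\alpha\in\Delta_0}c_\alpha \alpha^\vee,\qquad c_\alpha\in\Q_{\geq 0},\]
the $c_\alpha$ being uniquely determined by the $\Q$-linear independence of the simple relative coroots. Because $M_b=\mathrm{Cent}_H([\nu_b])$, its set of simple roots is $\Delta_{0,M_b}=\{\alpha\in\Delta_0:\langle\alpha,[\nu_b]\rangle=0\}$. A strict standard Levi $M\supseteq M_b$ corresponds to a subset $\Delta_{0,M_b}\subseteq\Delta_{0,M}\subsetneq\Delta_0$, and the condition $\mu^\diamond-[\nu_b]\in\langle\Phi^\vee_{0,M}\rangle_\Q$ amounts exactly to $c_\alpha=0$ for all $\alpha\notin\Delta_{0,M}$. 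Hence such an $M$ exists (i.e. $(G,\mu,b)$ is HN-decomposable) iff there is some $\alpha$ with $\langle\alpha,[\nu_b]\rangle>0$ and $c_\alpha=0$. Negating gives (3).

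For $(3)\Leftrightarrow(2)$: I would use the natural surjection $\pi\colon X_*(A)_\Q\twoheadrightarrow \pi_1(M_b)_{\Gamma,\Q}$, whose kernel is $\langle\Phi^\vee_{0,M_b}\rangle_\Q$ (via the identifications $X_*(A)_\Q = X_*(T)^\Gamma_\Q$ and $\pi_1(M_b)_{\Gamma,\Q} = \pi_1(M_b)^\Gamma_\Q$). Under $\pi$ one has $\mu^\diamond\mapsto\mu^\sharp$ (by compatibility of Galois averaging with the quotient map), $[\nu_b]\mapsto\kappa_{M_b}(b_{M_b})$ (since $[b_{M_b}]$ is basic in $B(M_b)$ and $W_{M_b}$ fixes $[\nu_b]$), and $\alpha^\vee\mapsto\alpha^{\vee,\sharp}$, which vanishes iff $\alpha\in\Delta_{0,M_b}$; by dimension count the remaining images form a $\Q$-basis of $\pi_1(M_b)_{\Gamma,\Q}$. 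Applying $\pi$ to the expansion above would yield
\[\mu^\sharp-\kappa_{M_b}(b_{M_b})=\sum_{\alpha:\langle\alpha,[\nu_b]\rangle>0}c_\alpha \alpha^{\vee,\sharp}\quad\text{in }\pi_1(M_b)_{\Gamma,\Q},\]
with the same rational coefficients $c_\alpha$. So (3) is equivalent to the positivity of each $c_\alpha$ in this identity.

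The only remaining point is the integrality needed for (2). Here I would invoke Lemma~\ref{lemma_BGmu reduction Levi}(3) applied to $M=M_b$, which supplies $w\in W$ and $\tilde\epsilon\in\pi_1(M_b)_{\Gamma,\mathrm{tor}}$ with $\kappa_{M_b}(b_{M_b})=(w\mu)^\sharp+\tilde\epsilon$ in $\pi_1(M_b)_\Gamma$. The torsion drops out in $\pi_1(M_b)_{\Gamma,\Q}$, giving $\mu^\sharp-\kappa_{M_b}(b_{M_b})=\mu^\sharp-(w\mu)^\sharp$. Since $\mu-w\mu$ is a non-negative integer combination of simple absolute coroots (standard Weyl group fact for dominant $\mu$), its image under $\pi$ is a non-negative integer combination of the $\alpha^{\vee,\sharp}$ with $\alpha\in\Delta_0\setminus\Delta_{0,M_b}$; by uniqueness of the expansion in the chosen $\Q$-basis, each $c_\alpha$ must be a non-negative integer. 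I expect this last integrality step to be the main obstacle, since it requires threading through Lemma~\ref{lemma_BGmu reduction Levi} and matching up the quotient of $X_*(A)_\Q$ with $\pi_1(M_b)_{\Gamma,\Q}$; once done, the remainder of the corollary is bookkeeping.
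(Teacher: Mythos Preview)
The paper states this corollary without proof, treating it as an immediate consequence of the definition and Lemma~\ref{lemma_HN-decomp}. Your argument is correct and supplies exactly the details the paper omits: the equivalence $(1)\Leftrightarrow(3)$ is the straightforward unpacking of the definition via the expansion $\mu^\diamond-[\nu_b]=\sum c_\alpha\alpha^\vee$, and projecting to $\pi_1(M_b)_{\Gamma,\Q}$ gives $(3)\Leftrightarrow(2)$ once integrality is in hand.

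One small caveat on the integrality step. Your use of Lemma~\ref{lemma_BGmu reduction Levi}(3) correctly produces $\mu^\sharp-\kappa_{M_b}(b_{M_b})=(\mu-w\mu)^\sharp$ in $\pi_1(M_b)_{\Gamma,\Q}$, which is a non-negative integer combination of the images $\beta^{\vee,\sharp}$ of \emph{absolute} simple coroots $\beta\in\Delta\setminus\Delta_{M_b}$. The statement of (2) is phrased in terms of $\alpha^{\vee,\sharp}$ for \emph{relative} simple roots $\alpha\in\Delta_0$, and in general $\alpha^{\vee,\sharp}$ and $\beta^{\vee,\sharp}$ (for $\beta|_A=\alpha$) differ by a positive integer factor in $\pi_1(M_b)_{\Gamma}$, so integrality in one basis need not literally transfer to the other. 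However, when the corollary is actually applied later (in the proof of Lemma~\ref{Lemma_minimal_Mb-modification}), the paper works with representatives $\beta_j$ of Galois orbits in $\Delta\setminus\Delta_{M_b}$ and uses integer coefficients with respect to $\beta_j^{\vee,\sharp}$ --- exactly what your argument delivers. So this is a notational looseness in the paper's formulation of (2) rather than a gap in your reasoning.
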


\begin{corollary}\label{coro-HN-decomp}Suppose $G$ is quasi-split. Suppose $[b]\in B(G, \mu^{\sharp}+\e, \mu^{\diamond})$ for some $\e\in \pi_1(G)_{\Gamma, tor}$ and $(G, \mu, b)$ is HN-decomposable. Then there exists a unique strict standard Levi subgroup $M$, a unique element $\e_M\in \pi_1(M)_{\Gamma, tor}$ and a reduction $\tilde{b}_M$ such that \begin{enumerate}
\item $\e$ is the image of $w_0\e_M$ under the natural map $\pi_1(M)_{\Gamma}\rightarrow \pi_1(G)_{\Gamma}$,

\item $[\tilde{b}_M]\in B(M, (\tilde{w}_0\mu)^{\sharp}+\e_M,(\tilde{w}_0\mu)^{\diamond})$,

\item $(M, \tilde{w}_0\mu, \tilde{b}_M)$ is HN-indecomposable.
\end{enumerate}
\end{corollary}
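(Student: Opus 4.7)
The plan is to combine an iteration of Lemma~\ref{lemma_HN-decomp} (for existence) with the explicit numerical criterion for HN-indecomposability in Corollary~\ref{coro_HN-indecomp} (for uniqueness). It is cleaner to work first with the equivalent formulation (2) of Lemma~\ref{lemma_HN-decomp}, where the cocharacter $\mu$ stays fixed throughout the argument, and to translate the output to the (3)-formulation of the corollary only at the end, via the conjugation bijection
\[
B(N,\mu^{\sharp}+\e_N, \mu^{\diamond}) \;\iso\; B(w_0 N w_0^{-1},(\tilde w_0\mu)^{\sharp}+w_0\e_N,(\tilde w_0\mu)^{\diamond})
\]
recalled in the proof of that lemma.

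For existence, I would start from the data $(N^{(1)},\e_{N^{(1)}}, b_{N^{(1)}})$ supplied by Lemma~\ref{lemma_HN-decomp}(2) applied to $(G,\mu,b)$: here $N^{(1)}\supseteq M_b$ is a strict standard Levi of $G$, $\e_{N^{(1)}}\in\pi_1(N^{(1)})_{\Gamma,\tr{tor}}$ maps to $\e$, and $[b_{N^{(1)}}]\in B(N^{(1)},\mu^{\sharp}+\e_{N^{(1)}},\mu^{\diamond})$. If $(N^{(1)},\mu,b_{N^{(1)}})$ is HN-indecomposable, I stop. Otherwise, I apply Lemma~\ref{lemma_HN-decomp}(2) inside $N^{(1)}$ (legitimate, since $N^{(1)}$ is quasi-split, $\mu$ is $N^{(1)}$-dominant, and $M_{b_{N^{(1)}}}=M_b$) to produce $N^{(2)}\subsetneq N^{(1)}$ with the analogous data; standard Levis of standard Levis are standard in $G$, and the chain $\pi_1(N^{(2)})_\Gamma\to\pi_1(N^{(1)})_\Gamma\to\pi_1(G)_\Gamma$ together with Lemma~\ref{lemma_BGmu reduction Levi}(1) shows that $\e_{N^{(2)}}$ still maps to $\e$. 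This strictly descending chain of standard Levis must terminate at an HN-indecomposable triple $(N,\mu,b_N)$, from which the desired $(M,\e_M,\tilde b_M)$ satisfying (1)--(3) of the corollary is obtained via the bijection above.

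For uniqueness, transporting back along the same bijection reduces to showing that an HN-indecomposable $(N,\mu,b_N)$ with $N\supseteq M_b$ and satisfying the (2)-analogue of (1)--(2) is uniquely determined. Corollary~\ref{coro_HN-indecomp}(3) applied to $(N,\mu,b_N)$ forces every $\alpha\in\Delta_0^N$ with $\langle\alpha,[\nu_b]\rangle>0$ to appear with positive coefficient of $\alpha^\vee$ in $\mu^{\diamond}-[\nu_b]$, while the HN-decomposability of $b$ into $N$ forces $\mu^{\diamond}-[\nu_b]$ to lie in the $\Q$-span of $\{\alpha^\vee\mid\alpha\in\Delta_0^N\}$. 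Combining these constraints with $\Delta_0^N\supseteq\Delta_0^{M_b}$ yields
\[
\Delta_0^N \;=\; \Delta_0^{M_b}\,\cup\,\bigl\{\alpha\in\Delta_0\,:\,\tr{coefficient of }\alpha^\vee\tr{ in }\mu^{\diamond}-[\nu_b]\tr{ is positive}\bigr\},
\]
so $N$, and hence $M$, is unique. Uniqueness of $\e_M$ is then immediate from the injectivity of $\pi_1(M)_{\Gamma,\tr{tor}}\to\pi_1(G)_{\Gamma,\tr{tor}}$ in Lemma~\ref{lemma_BGmu reduction Levi}(1), and $\tilde b_M$ is pinned down by the construction in Lemma~\ref{lemma_HN-decomp}(3) as the enlargement of the canonical reduction of $b$ to $w_0 M_b w_0^{-1}$.

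The main technical obstacle is the bookkeeping in passing between formulations (2) and (3): verifying that a standard Levi $M$ containing $w_0 M_b w_0^{-1}$ can indeed be extracted from the standard $N\supseteq M_b$ produced by the iteration, and that the Kottwitz classes, reductions, and torsion elements transport compatibly under $w_0$-conjugation so that the terminal HN-indecomposability in $M$ is equivalent to the one in $N$ verified via the numerical criterion.
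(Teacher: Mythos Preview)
Your proposal is correct and ultimately rests on the same idea as the paper: the unique Levi is determined by the formula $\Delta_0^N=\Delta_0^{M_b}\cup\{\alpha\in\Delta_0: n_\alpha>0\}$ where $\mu^{\diamond}-[\nu_b]=\sum_\alpha n_\alpha\alpha^\vee$, and one then passes to $M=w_0 N w_0^{-1}$. The only organizational difference is that the paper writes this Levi down directly and asserts HN-indecomposability, whereas you reach the same formula through your uniqueness analysis and replace the direct verification of HN-indecomposability by an iteration of Lemma~\ref{lemma_HN-decomp}(2); this detour is harmless but unnecessary once you have the explicit description.
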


\begin{proof}Let $M_1$ be the standard Levi subgroup of $G$ such that its simple roots are described as follows: \[\Delta_{M_1,0}=\Delta_{M_b, 0}\cup \{\alpha\in \Delta_0| n_{\alpha}>0\},\]
where $\mu^{\diamond}-[\nu_b]=\sum_{\alpha\in \Delta_0}n_{\alpha}\alpha^{\vee}$ with $n_{\alpha}\geq 0$. Let $b_{M_1}$ be the reduction of $b$ to $M_1$ deduced from its canonical reduction to $M_b$ combined with the inclusion $M_b\subseteq M_1$. Then it's easy to see $(M_1, \mu, b_{M_1})$ is HN-indecomposable and $M_1$ is the unique standard Levi subgroup with this property. Let $M=w_0M_1w_0^{-1}$. The rest of the assertions can be proved in the same way as Lemma \ref{lemma_HN-decomp}.
\end{proof}

\begin{remark}\label{remark_eM}Notations as in Corollary \ref{coro-HN-decomp}, as $w_0\e$ is the image of $\e_M$ under the natural injective map $\pi_1(M)_{\Gamma}\rightarrow \pi_1(G)_{\Gamma}$, we may identify $\e_M$ and $w_0\e$.
\end{remark}

The following proposition is a key ingredient to the proof of the main result.

\begin{proposition}\label{Prop_preimage_admissible} Suppose $G$ is quasi-split. Let $[b]\in A(G, \mu)$ such that $(G,\mu, b)$ is HN-decomposable. Let $M\subseteq G$ be the strict standard Levi subgroup such that $(M, w_0\mu, \tilde{b}_M)$ is HN-indecomposable as in Corollary \ref{coro-HN-decomp}. Then \[\mathrm{pr}_{\tilde w_0}^{-1}(\Fc(M, \tilde w_0\mu, \tilde{b}_M)^a(C))=\Fc(G, \mu, b)^a(C),\]
\[\mathrm{pr}_{\tilde w_0}^{-1}(\Fc(M, \tilde w_0\mu, \tilde{b}_M)^{wa}(C))=\Fc(G, \mu, b)^{wa}(C).\]
\end{proposition}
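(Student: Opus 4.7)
The plan is to work cell-by-cell in the decomposition $\Fc(G,\mu)(C)=\coprod_{w\in W_P\backslash W/W_{P_\mu}}P(C)wP_\mu(C)/P_\mu(C)$, using the $P$-reduction of $\E_{b,x}$ induced by Lemma~\ref{lemma_redction modfication} from the reduction $\E_{\tilde b_M}\times^M P$ of $\E_b$. For $x$ in the $w$-cell, Lemma~\ref{lemma:reduction to P modification} identifies the Levi component of this $P$-reduction with $\E_{\tilde b_M,\mathrm{pr}_w(x)}$, providing the bridge between (weak) admissibility of $x$ in $\Fc(G,\mu)$ and of $\mathrm{pr}_w(x)$ in $\Fc(M,w\mu)$.

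For the admissible equality, the inclusion ``$\supseteq$'' is direct: if $\mathrm{pr}_{\tilde w_0}(x)$ is admissible in $M$, then $(\E_{b,x})_P\times^P M\simeq\E_{\tilde b_M,\mathrm{pr}_{\tilde w_0}(x)}$ is trivial, so Corollary~\ref{Coro_HN equality among reduction to M} forces $\E_{b,x}$ to be trivial. For the reverse inclusion, suppose $x\in\Fc(G,\mu,b)^a(C)$ lies in the $w$-cell. Proposition~\ref{prop_invariant_bundle}(2) supplies the first Chern class $c_1^M(\E_{\tilde b_M,\mathrm{pr}_w(x)})=(w\mu)^\sharp-\kappa_M(\tilde b_M)\in\pi_1(M)_\Gamma$, while Theorem~\ref{theoSchi}(1) and Remark~\ref{remark_vector v}, applied to the trivial bundle $\E_{b,x}$, yield $\mathrm{Av}_{W_M}\nu_{\E_{\tilde b_M,\mathrm{pr}_w(x)}}\le 0$ in $X_*(A)_\Q$. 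Combining these with the HN-indecomposability of $(M,\tilde w_0\mu,\tilde b_M)$ (via Corollary~\ref{coro-HN-decomp}(3) and Corollary~\ref{coro_HN-indecomp}(3) applied inside $M$) will force $w=\tilde w_0$ and moreover force $\nu_{\E_{\tilde b_M,\mathrm{pr}_{\tilde w_0}(x)}}=0$; the latter together with the first Chern class pins down triviality of the Levi factor in $M$, so that $\mathrm{pr}_{\tilde w_0}(x)\in\Fc(M,\tilde w_0\mu,\tilde b_M)^a(C)$.

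The weakly admissible equality is proved in parallel via Proposition~\ref{proposition_wa}. Weak admissibility of $x$ in $G$ is a family of degree inequalities $\deg\chi_*(\E_{b,x})_Q\le 0$ indexed by standard parabolics $Q$ of $G$, reductions $b_L$ to their Levi, and $\chi\in X^*(Q/Z_G)^+$, and analogously in $M$. For $Q\subseteq P$ these inequalities are matched term-by-term via the commutative diagram~(\ref{diagram in corollary}) from the proof of Corollary~\ref{Coro_HN equality among reduction to M} together with Lemma~\ref{lemma:reduction to P modification} applied inside $M$; parabolics $Q\not\subseteq P$ are reduced to the case of $Q\cap P$ and then controlled by the slope bound of Corollary~\ref{Coro_HN equality among reduction to M}. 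For the reverse inclusion, the cell-containment $\Fc(G,\mu,b)^{wa}(C)\subseteq P(C)\tilde w_0 P_\mu(C)/P_\mu(C)$ follows by the same first Chern/Newton analysis as in the admissible case: for $w\ne\tilde w_0$ one extracts a character $\chi\in X^*(P/Z_G)^+$ with $\deg\chi_*(\E_{b,x})_P>0$, which would violate weak admissibility.

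The main obstacle I expect is the cell-containment step, which combinatorially singles out $\tilde w_0\in W_P\backslash W/W_{P_\mu}$ using the HN-indecomposability hypothesis on $(M,\tilde w_0\mu,\tilde b_M)$ together with the minuscule hypothesis on $\mu$ (needed so that the image of $w\mu$ in $\pi_1(M)_\Gamma$ separates the double cosets); the careful bookkeeping of parabolics not contained in $P$ in the weakly admissible case is another delicate point.
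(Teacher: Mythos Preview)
Your plan is structurally correct and very close to the paper's proof: the cell containment $\Fc(G,\mu,b)^{wa}(C)\subseteq P(C)w_0P_\mu(C)/P_\mu(C)$ is Lemma~\ref{lemma: adm in Shubert}, the admissible ``$\supseteq$'' via Corollary~\ref{Coro_HN equality among reduction to M} is exactly what the paper does, and the admissible ``$\subseteq$'' is Lemma~\ref{Lemma_tilde_varphi_isom}. One minor remark: the cell containment does not need HN-indecomposability of $(M,\tilde w_0\mu,\tilde b_M)$; it uses only $[\tilde b_M]\in A(M,\tilde w_0\mu)$, which already gives $\kappa_M(\tilde b_M)=(w_0\mu)^\sharp$ in $\pi_1(M)_{\Gamma,\Q}$ and hence forces $(w\mu)^\sharp=(w_0\mu)^\sharp$ in $\pi_1(M)_\Gamma$ from the degree inequalities.

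There is however a genuine gap in your treatment of the weakly admissible direction ``$\mathrm{pr}_{\tilde w_0}(x)$ wa in $M$ $\Rightarrow$ $x$ wa in $G$''. The difficulty is not one of parabolics but of \emph{reductions of $b$}: the wa criterion (Proposition~\ref{proposition_wa}) tests against reductions $(\E_{b,x})_Q$ coming from \emph{isocrystal} reductions $b_{M_Q}$ of $b$, and such a $b_{M_Q}$ has a priori nothing to do with the fixed reduction $\tilde b_M$. Your phrase ``reduce to $Q\cap P$'' glosses over this: $Q\cap P$ need not be standard, and even if you pass via the proof of Corollary~\ref{Coro_HN equality among reduction to M} to a reduction of $\E_{\tilde b_M,\mathrm{pr}_{\tilde w_0}(x)}$ to some $P'_M=M\cap\dot w^{-1}Q\dot w$, you must still know that this reduction arises from a reduction of $\tilde b_M$ to its Levi $M_1$ in order to invoke weak admissibility in $M$. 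The paper supplies this missing link as Lemma~\ref{lemma_reduction to smaller Levi}: given $b_{M_Q}$ and $\tilde b_M$, one finds $w\in W^\Gamma$ and $b_1\in M_1(\breve F)$ with $M_1=M\cap w^{-1}M_Qw$ which is simultaneously a reduction of $\tilde b_M$ and (after conjugation by $\dot w$) of $b_{M_Q}$. This is an honest argument about the semi-simplicity of the isocrystal category and the injectivity of $\pi_1(M)_{\Gamma,\mathrm{tor}}\to\pi_1(G)_{\Gamma,\mathrm{tor}}$; it is then packaged into Lemma~\ref{lemma_wa Levi in wa G}, which together with the degree transfer from the proof of Corollary~\ref{Coro_HN equality among reduction to M} gives the implication. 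Without this step your argument would only bound degrees of those $Q$-reductions of $\E_{b,x}$ that happen to be compatible with $\tilde b_M$, which is not all of them.
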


\begin{remark}The appearence of $\tilde{w}_0$ in the statement is due to the fact that $\nu_{\E_b}=-w_0[\nu_b]$ by Proposition \ref{prop_invariant_bundle}.
\end{remark}
The remaining of this section is devoted to the proof of this proposition. Suppose $(G, \mu, b)$ is HN-decomposable with $G$ quasi-split. Suppose $M$ is the Levi subgroup of $G$ such that $(M, \tilde{w}_0\mu, \tilde{b}_M)$ is HN-indecomposable as in Corollary \ref{coro-HN-decomp}. Let $P$ be the standard parabolic subgroup of $G$ with $M$ as Levi component.

\begin{lemma}\label{lemma: adm in Shubert}
$\Fc(G, \mu, b)^{wa}(C)\subseteq P(C)w_0 P_{\mu}(C)/P_{\mu}(C)$.
\end{lemma}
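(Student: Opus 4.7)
The plan is to combine the Bruhat decomposition of $\Fc(G,\mu)$ with respect to $P$ with the numerical weak admissibility criterion of Proposition~\ref{proposition_wa}. Writing
$$\Fc(G,\mu)(C) = \coprod_{w \in W_P\backslash W/W_{P_\mu}} P(C) w P_\mu(C)/P_\mu(C),$$
with $P(C) w_0 P_\mu(C)/P_\mu(C)$ the unique open cell (corresponding to the longest double coset, in which $w_0$ lies), my goal is to show that a point $x$ lying in any other cell automatically fails to be weakly admissible.

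For such $x\in P(C)wP_\mu(C)/P_\mu(C)$, I would apply Lemma~\ref{lemma:reduction to P modification} to the reduction $\E_{\tilde b_M}\times^M P$ of $\E_b$ to $P$ (with $\tilde b_M$ as in Corollary~\ref{coro-HN-decomp}) to obtain a reduction $(\E_{b,x})_P$ of $\E_{b,x}$ to $P$ satisfying
$$(\E_{b,x})_P\times^P M \simeq \E_{\tilde b_M, \mathrm{pr}_w(x)},$$
i.e., a modification of $\E_{\tilde b_M}$ of type $w\mu$ viewed as an $M$-cocharacter. Combining Proposition~\ref{prop_invariant_bundle}(2) with the identity $\kappa_M(\tilde b_M) = (\tilde w_0\mu)^\sharp + \e_M$ extracted from Corollary~\ref{coro-HN-decomp} gives
$$c_1^M\bigl((\E_{b,x})_P\times^P M\bigr) = (w\mu)^\sharp - (\tilde w_0\mu)^\sharp - \e_M \in \pi_1(M)_\Gamma.$$
Any $\chi\in X^\ast(P/Z_G)^+$ pairs trivially with the torsion term $\e_M$ (the pairing lands in $\Z$), so Proposition~\ref{proposition_wa} reduces the weak admissibility of $x$ to the inequality
$$\deg\chi_\ast(\E_{b,x})_P \;=\; \langle\chi,\,(w\mu)^\sharp-(\tilde w_0\mu)^\sharp\rangle \;\leq\; 0 \qquad \forall\,\chi\in X^\ast(P/Z_G)^+.$$

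The main obstacle, and essentially the heart of the argument, is to show that this inequality strictly fails for suitable $\chi$ when $w$ lies outside $W_P w_0 W_{P_\mu}$. I would exploit that the projection $X_\ast(T)\ra X_\ast(M^{ab}/Z_G)$ is $W_M$-invariant (since $W_M$ acts trivially on the cocenter $M^{ab}$) and that $W_{P_\mu}$ fixes $\mu$, so the image of $w\mu$ depends only on the class of $w$ in $W_M\backslash W/W_{P_\mu}$. The image of $\tilde w_0\mu$ coincides with that of $w_0\mu$, which is the unique $(P/Z_G)$-antidominant element among the images of all $W$-translates of $\mu$; consequently, for any other double coset the image of $w\mu$ strictly dominates that of $\tilde w_0\mu$ in the $(P/Z_G)$-dominance order, and one may select a fundamental weight $\chi\in X^\ast(P/Z_G)^+$ witnessing a strict inequality. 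The resulting violation of Proposition~\ref{proposition_wa} contradicts the weak admissibility of $x$, completing the argument.
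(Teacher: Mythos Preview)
Your proposal is correct and follows essentially the same route as the paper's proof: both use Lemma~\ref{lemma:reduction to P modification} to compute $(\E_{b,x})_P\times^P M$ on the $w$-cell, read off $c_1^M$ via Proposition~\ref{prop_invariant_bundle}(2), and combine the weak admissibility inequality $\langle\chi,(w\mu)^\sharp-(w_0\mu)^\sharp\rangle\leq 0$ with the automatic opposite inequality coming from $w\mu\geq w_0\mu$ to force $(w\mu)^\sharp=(w_0\mu)^\sharp$ in $\pi_1(M)$, hence $w\in W_P w_0 W_{P_\mu}$. The only cosmetic differences are your explicit handling of the torsion term $\e_M$ (the paper simply passes to $\pi_1(M)_\Gamma\otimes\Q$) and your phrasing of the final step in terms of a ``unique antidominant image'' rather than the paper's direct appeal to $w\mu\geq w_0\mu$; in both versions the last implication tacitly uses that $\mu$ is minuscule so that equality of images in $\pi_1(M)$ forces $w\mu\in W_M\cdot w_0\mu$.
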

\begin{proof}
Suppose $\Fc(G, \mu, b)^{wa}(C)\cap P(C)wP_{\mu}(C)/P_{\mu}(C)\neq \emptyset$ for some $w\in W$. We want to show $w_0\in W_{P}wW_{P_{\mu}}$.  For any $x\in \Fc(G, \mu, b)^{wa}(C)\cap P(C)wP_{\mu}(C)/P_{\mu}(C)$, we have \[(\E_{b, x})_P\simeq\E_{\tilde b_M, \mathrm{pr}_w(x)}\] by Lemma \ref{lemma:reduction to P modification}. The weak semi-stability of $\E_{b,x}$ implies that   $\deg\chi_*(\E_{\tilde{b}_M, \mathrm{pr}_w(x)})\leq 0$ for any $\chi\in X^*(M/Z_G)^+$. On the other side, using the fact that $[\tilde{b}_M]\in A(M, \tilde{w}_0\mu)$, we have \[c_1^M(\E_{\tilde{b}_M, \mathrm{pr}_w(x)})=(w\mu)^\sharp-\kappa_M(\tilde{b}_M)=(w\mu)^{\sharp}-(w_0\mu)^{\sharp} \text{ in }\pi_1(M)_{\Gamma}\otimes\Q,\] and $\deg\chi_*(\E_{b_M, \mathrm{pr}_w(x)})=\langle w\mu- w_0\mu, \chi\rangle\leq 0$ for any $\chi\in X^*(M/Z_G)^+$ by Proposition \ref{prop_invariant_bundle}. Therefore the equality holds for any $\chi$ and $(w\mu)^{\sharp}=(w_0\mu)^{\sharp}$ in $\pi_1(M)_{\Gamma}$. On the other hand, as $w\mu\geq w_0\mu$, $w\mu$ and $w_0\mu$ has the same image in $\pi_1(M)$. The result follows.
\end{proof}

Notations as in Corollary \ref{coro-HN-decomp}. Let $\E^G_{\e}$ be the $G$-bundle such that $\nu_{\E^G_{\e}}$ is trivial and $c_1^G(\E^G_{\e})=-\e$. Similarly, let $\E^M_{w_0\e}$ be the $M$-bundle such that $\nu_{\E^M_{w_0\e}}$ is trivial and $c_1^M(\E^M_{w_0\e})=-w_0\e$ (cf. Remark \ref{remark_eM}). Let $\E^P_{w_0\e}:=\E^M_{w_0\e}\times^M P$. When $[b]\in B(G, \mu)$ (i.e., $\e=0$), we have $\E^G_{\e}=\E^G_1$, $\E^{M}_{w_0\e}=\E^M_1$ and $\E^P_{w_0\e}=\E^P_1$ are the trivial bundles.

Consider the following commutative diagram (cf. \cite{Man} \cite{Sh}) of de Rham period maps for different groups from local Shimura varieties at infinite level to flag varieties.
\[\xymatrix{& \M(P, \tilde w_0\mu, \tilde{b}_P)_{\infty}\ar[dl]_{\xi_M}\ar[dr]^{\xi_G}
\ar@{->>}[dd]^{\pi_{dR}^P} &\\
 \M(M, \tilde w_0\mu, \tilde{b}_M)_{\infty}\ar@{->>}[dd]_{\pi_{dR}^M} & &\M(G, \mu, b)_{\infty}\ar@{->>}[dd]^{\pi_{dR}^G}\\
  &\Fc(P, \tilde w_0\mu, \tilde{b}_P)^a\ar[dr]_{\tilde\xi_G}\ar[dl]^{\tilde \xi_M}&\\
  \Fc(M, \tilde w_0\mu, \tilde{b}_M)^a & &\Fc(G, \mu, b)^a }\]
where
\begin{itemize}
\item $\tilde{b}_P$ is the reduction of $b$ to $P$ induced by the reduction $\tilde{b}_M$ of $b$ to $M$ combined with the inclusion $M\subseteq P$.
\item $\M(G,\mu, b)_{\infty}$ (resp. $\M(M, \tilde w_0\mu, \tilde{b}_M)$, resp. $\M(P, \tilde w_0\mu, \tilde{b}_P)$) classifies modifications of type $\mu$ (resp. $\tilde w_0\mu$, resp. $\tilde{w}_0\mu$) between $\E^G_b$ (resp. $\E^M_b$, resp. $\E^P_b$) and $\E^G_\e$ (resp. $\E^M_{w_0\e}$, resp. $\E^P_{w_0\e}$).
\item $\pi^G_{dR}$, $\pi^M_{dR}$, $\pi^P_{dR}$ are the de Rham period maps. More precisely, for a modification in $\M(G, \mu, b)_{\infty}$ its image by $\pi^G_{dR}$ is $x$ if $\E_{b, x}^G=\E_{\e}^G$. Similarly for $\pi^M_{dR}$ and $\pi^P_{dR}$. We define $\Fc(P, \tilde{w}_0\mu, \tilde{b}_P)^a$ to be the image of $\pi_{dR}^P$.
\item $\xi_G$ (resp. $\xi_M$) is the induced modification via the natural morphism $P\ra G$ (resp. the projection to the Levi quotient $P\ra M$).
\item $\tilde\xi_G$ is induced from
\begin{eqnarray}\label{eqn_flag variet P to G}\Fc(P, \tilde w_0\mu)=P/P_{\tilde w_0\mu}\cap P&\rightarrow& P\tilde w_0 P_{\mu}/P_{\mu}\subseteq G/P_{\mu}=\Fc(G, \mu)
\\ \nonumber aP_{\tilde{w}_0\mu}\cap P &\mapsto &a(P_{\tilde{w}_0\mu}\cap P)\tilde{w}_0P_{\mu}=a\tilde{w}_0P_{\mu}
\end{eqnarray}

\item $\tilde\xi_M$ is induced from the natural projection $P\rightarrow M$ to the Levi component
\[
\Fc(P, \tilde w_0\mu)=P/P_{\tilde w_0\mu}\rightarrow M/M_{\tilde w_0\mu}=\Fc(M, \tilde w_0\mu) \]
\end{itemize}

By the proof of \cite{CFS} Lemma 6.3, we have the following fact.
\begin{lemma}\label{lemma_reduction two parabolic}Let $\E$ be a $G$-bundle and let $P'\subseteq P$ be a standard parabolic subgroup of $G$ contained in $P$. There is a bijection between
\begin{itemize}
\item reductions $\E_{P'}$ of $\E$ to $P'$,
\item reductions $\E_P$ to $P$ together with a reduction $(\E_P\times^P M)_{M\cap P'}$ of $\E_P\times^P M$ of $M\cap P'$.
\end{itemize}
    Moreover, this bijection indentifies $\E_{P'}\times^{P'}M'$ and $(\E_P\times^P M)_{M\cap P'}\times^{M\cap P'}M'$, where $M'$ is the Levi component of $P'$.
\end{lemma}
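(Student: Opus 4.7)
The plan is to convert both sides of the asserted bijection into sections of fibrations via Remark~\ref{remark_section}, and then to use a group-theoretic decomposition of $P'$ inside $P$ to match them up.

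First I would recall that reductions of a $G$-bundle $\E$ to a subgroup $H\subseteq G$ are the same as sections of $H\backslash\E\to X$. Thus a reduction $\E_{P'}$ is a section $s':X\to P'\backslash\E$. The inclusion $P'\subseteq P$ induces a projection $P'\backslash\E\to P\backslash\E$ which is a Zariski-locally trivial fibration with fiber $P'\backslash P$, so $s'$ automatically gives a section $s:X\to P\backslash\E$ (equivalently, a reduction $\E_P$) together with a section of the pullback fibration along $s$.

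The key algebraic input is that, since $P$ and $P'$ are both standard, they contain $B$ and hence the unipotent radical $U_P$ of $P$. This gives the Levi-type decomposition $P'=(P'\cap M)\cdot U_P$, and therefore a canonical $P$-equivariant isomorphism $P'\backslash P\cong (P'\cap M)\backslash M$. I would then use compatibility of associated bundles with quotients to identify the pullback of $P'\backslash\E\to P\backslash\E$ along $s$ with the fibration $(P'\cap M)\backslash(\E_P\times^P M)\to X$. By Remark~\ref{remark_section} once more, a section of the latter is exactly a reduction of the $M$-bundle $\E_P\times^P M$ to the subgroup $P'\cap M\subseteq M$. Reversing the construction (given $\E_P$ and a reduction of $\E_P\times^P M$ to $P'\cap M$, glue to recover a section of $P'\backslash\E$) is formal and produces the inverse bijection.

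For the moreover part, I would observe that $P'\cap M$ is a parabolic subgroup of $M$ whose Levi component is precisely $M'$: indeed $R_u(P'\cap M)=R_u(P')\cap M$ and $R_u(P')=(R_u(P')\cap M)\cdot U_P$, so the $M'$-quotients agree. Since extension of structure group is transitive, quotienting $\E_{P'}$ directly by $R_u(P')$ to form $\E_{P'}\times^{P'}M'$ yields the same $M'$-bundle as the two-step procedure of first killing $U_P$ (producing $\E_P\times^P M$) and then killing $R_u(P'\cap M)$ in the reduction $(\E_P\times^P M)_{P'\cap M}$. The only mild obstacle is checking the canonicity of the pullback identification with the associated bundle $(P'\cap M)\backslash(\E_P\times^P M)$, but this is routine once the decomposition $P'=(P'\cap M)U_P$ is in hand, so no real difficulty is expected.
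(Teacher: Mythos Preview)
Your proposal is correct and is essentially the standard argument that the paper has in mind: the paper does not give a self-contained proof here but simply refers to the proof of \cite{CFS} Lemma~6.3, which proceeds exactly by identifying reductions with sections (Remark~\ref{remark_section}) and using the decomposition $P'=(P'\cap M)\cdot R_u(P)$ to match $P'\backslash P$ with $(P'\cap M)\backslash M$. Your treatment of the ``moreover'' part via $R_u(P')=(R_u(P')\cap M)\cdot R_u(P)$ is also the expected one.
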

\bigskip
Now we can prove the following result.
\begin{lemma}\label{Lemma_tilde_varphi_isom} In the above diagram, we have $\tilde\xi_G$ is an isomorphism of adic spaces and
 \[\M(G, \mu, b)_{\infty}\simeq\coprod_{\mathrm{Aut}(\E^G_{\e})/\mathrm{Aut}(\E^P_{w_0\e})}\M(P, \tilde w_0\mu, b)_{\infty}.\]
\end{lemma}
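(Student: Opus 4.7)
The plan is to prove the isomorphism $\tilde\xi_G$ first and then deduce the decomposition from the fiber structure of $\xi_G$ at infinite level.

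I first analyze the underlying map on flag varieties. The morphism \eqref{eqn_flag variet P to G} is an isomorphism of $\Fc(P,\tilde w_0\mu)$ onto the Schubert cell $P\tilde w_0P_\mu/P_\mu\subseteq \Fc(G,\mu)$, because for minuscule $\mu$ the stabilizer $P\cap \tilde w_0P_\mu\tilde w_0^{-1}$ equals $P_{\tilde w_0\mu}\cap P$. Since $\tilde w_0\in W_M w_0$, this Schubert cell coincides with $Pw_0P_\mu/P_\mu$, and by Lemma~\ref{lemma: adm in Shubert} it contains $\Fc(G,\mu,b)^{wa}\supseteq\Fc(G,\mu,b)^a$, which yields injectivity of $\tilde\xi_G$ between the admissible loci. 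To check well-definedness I would show $\E^P_{w_0\e}\times^P G\simeq \E^G_\e$: because $\e\in \pi_1(G)_{\Gamma,tor}$ lifts to a torsion class in $\pi_1(M)_\Gamma$ by Lemma~\ref{lemma_BGmu reduction Levi}(1), the basic element of $B(M)$ with $\kappa_M=w_0\e$ has central Newton $0$ and stays basic with Kottwitz invariant $\e$ when viewed in $G$.

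For surjectivity of $\tilde\xi_G$, I start with $x\in\Fc(G,\mu,b)^a(C)$, lift to the unique $y\in\Fc(P,\tilde w_0\mu)(C)$ via the Schubert-cell identification, and show that the induced $P$-modification $\E'^P$ of $\E^P_b$ via $y$ is isomorphic to $\E^P_{w_0\e}$. By Lemma~\ref{lemma:reduction to P modification}, $\E'^P\times^P M\simeq \E_{\tilde b_M,y}$; the modification formula together with $\kappa_M(\tilde b_M)=(\tilde w_0\mu)^\sharp+w_0\e$ from Corollary~\ref{coro-HN-decomp} yields $c_1^M(\E'^P\times^P M)=-w_0\e$. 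Since $(\E'^P\times^P M)\times^M G\simeq \E^G_\e$ has trivial $G$-Newton and the only $W$-orbit in $X_*(A)_\Q$ containing $0$ is $\{0\}$, the $M$-dominant Newton of $\E'^P\times^P M$ must vanish, whence $\E'^P\times^P M\simeq\E^M_{w_0\e}$. To upgrade from an $M$-isomorphism to a $P$-isomorphism, I invoke the vanishing $H^1(X,U_P^{\E^M_{w_0\e}})=0$: filtering the twisted unipotent radical by root subgroups, each graded piece is a line bundle of degree $\alpha_*(-w_0\e)=0$ (torsion), hence trivial on the Fargues--Fontaine curve with vanishing $H^1$.

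For the decomposition at infinite level, $\xi_G$ is equivariant along the inclusion $\mathrm{Aut}(\E^P_{w_0\e})\hookrightarrow\mathrm{Aut}(\E^G_\e)$. Given a $G$-modification in $\M(G,\mu,b)_\infty(C)$, the previous step produces a $P$-reduction of the target $\E^G_\e$ isomorphic to $\E^P_{w_0\e}$; the set of such $P$-reductions forms a single $\mathrm{Aut}(\E^G_\e)$-orbit with stabilizer $\mathrm{Aut}(\E^P_{w_0\e})$, so each fiber of $\xi_G$ is a torsor under $\mathrm{Aut}(\E^G_\e)/\mathrm{Aut}(\E^P_{w_0\e})$. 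Assembling these torsors yields the coproduct decomposition. The main obstacle is identifying the $M$-quotient $\E'^P\times^P M$ with the basic bundle $\E^M_{w_0\e}$ rather than merely matching its Kottwitz invariant; the Newton-rigidity argument exploiting that $0$ is the only element of its Weyl orbit is what makes this work, after which the $H^1$-vanishing on the unipotent radical and the orbit-counting at infinite level are essentially formal.
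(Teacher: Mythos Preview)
Your overall strategy matches the paper's: reduce to surjectivity on $C$-points via the open immersion $\Fc(P,\tilde w_0\mu)\hookrightarrow\Fc(G,\mu)$ and Lemma~\ref{lemma: adm in Shubert}, identify the $M$-quotient of the induced $P$-reduction with $\E^M_{w_0\e}$, and then read off the coproduct decomposition from the torsor structures. The $c_1^M$ computation and the $H^1$-vanishing upgrade from $M$ to $P$ are fine and essentially what the paper does (the latter is packaged there as the ``in particular'' clause of Corollary~\ref{Coro_HN equality among reduction to M}).

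There is, however, a genuine gap in your Newton argument. You assert that $(\E'^P\times^P M)\times^M G\simeq \E^G_\e$ and then use the triviality of its $G$-Newton to conclude that the $M$-Newton of $\E'^P\times^P M$ vanishes. But the functor $\times^P M$ followed by $\times^M G$ is \emph{not} the same as $\times^P G$: the former factors through the Levi quotient $P\to M$, while only the latter recovers $\E^G_\e$. For an arbitrary $P$-reduction of a semistable $G$-bundle the claim is simply false: on the Fargues--Fontaine curve one has saturated embeddings $\mathcal{O}(-1)\hookrightarrow\mathcal{O}^2$ (choose two sections of $\mathcal{O}(1)$ with disjoint zeros), giving a $B$-reduction of $\mathcal{O}^2$ whose Levi quotient pushes forward to $\mathcal{O}(-1)\oplus\mathcal{O}(1)\not\simeq\mathcal{O}^2$. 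So the isomorphism you invoke is precisely what needs to be proved, and your ``only $W$-orbit containing $0$ is $\{0\}$'' step is circular.

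The paper closes this gap differently. After establishing $c_1^M(\E_M)=0$ (your argument), it takes the canonical reduction $(\E_M)_{P'_M}$ of $\E_M$ to a standard parabolic $P'_M\subset M$ and, via Lemma~\ref{lemma_reduction two parabolic}, produces from it a reduction of the original semistable bundle $\E^G_\e$ to $P'=P'_M\cdot R_u(P)\subset G$. Semistability of $\E^G_\e$ then forces $\langle\chi,\nu_{\E_M}\rangle\le 0$ for all $\chi\in X^*(M'/Z_G)^+$, while $c_1^M(\E_M)=0$ gives the reverse inequality, so $\nu_{\E_M}=0$. This nested-reduction argument is the missing idea; once you insert it in place of the unjustified isomorphism, the rest of your outline goes through.
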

\begin{proof} We only deal with the case $[b]\in B(G, \mu)$. The proof for $[b]\in A(G, \mu)$ is similar. For the first assertion, as $\Fc(P, \tilde w_0\mu)\rightarrow\Fc(G, \mu)$ is an open immersion, it suffices to show \[\Fc(P, \tilde w_0\mu, \tilde{b}_P)^a(C)\rightarrow \Fc(G,\mu,b)^a(C)\] is surjective for any complete algebraic closed field $C$. For any $x\in \Fc(G, \mu, b)^a(C)$, by Lemma \ref{lemma: adm in Shubert}, $x\in P(C)\omega_0P_{\mu}(C)/P_{\mu}(C)$. Let $(\E_{b, x})_P$ be the reduction of $\E_{b, x}$ to $P$ induced by the reduction $\E_{\tilde{b}_P}$ of $\E_b$ to $P$. Write $(\E_{b, x})_P=\E_{\tilde{b}_P, y}$ where $y$ is the preimage of $x$ via (\ref{eqn_flag variet P to G}). We want to show $y\in \Fc(P, \tilde{w}_0\mu, \tilde{b}_P)^a(C)$. It's equivalent to show $(\E_{b, x})_P=\E_{\tilde{b}_P, y}$ is a trivial $G$-bundle. The isomorphism of $G$-bundles $\E_{b,x}\simeq \E_1$ induces a reduction $(\E_1)_P$ of $\E_1$ to $P$ and an isomorphism of $P$-bundles. $(\E_{b,x})_P\simeq (\E_1)_P$. We want to show that $(\E_1)_P$ is a trivial $P$-bundle.  By corollary \ref{Coro_HN equality among reduction to M}, it suffices to show that $\E_M:=(\E_1)_P\times^P M$ is a trivial $M$-bundle. We first show that
\begin{eqnarray}\label{eqn_c_1(M)=0}c_1^M(\E_M)=0.\end{eqnarray} We have the following equalities
\begin{eqnarray*}c_1^M((\E_M)&=&c_1^M((\E_{b, x})_P\times^P M)\\
&\stackrel{\text{Lemma} \ref{lemma:reduction to P modification}}{=}&c_1^M(\E_{\tilde{b}_M, \mathrm{pr}_{w_0}(x)})\\
&=&(w_0\mu)^{\sharp}-\kappa_M(\tilde{b}_M)\\
&=&0\in \pi_1(M)_\Gamma,\end{eqnarray*} where the last equality holds because $[\tilde{b}_M]\in B(M, \tilde{w}_0\mu)$. Now it remains to show the slope $\nu_{\E_M}=0$. Let $(\E_M)_{P'_M}$ be the canonical reduction of $\E_M$ to a standard parabolic subgroup $P'_M$ of $M$. Write $P'=P'_M\cdot R_u(P)$, where $R_u(P)$ denotes the unipotent radical of $P$. Note that $P'$ is a standard parabolic subgroup of $G$ with $P'\cap M=P'_M$. Let $M'$ be the Levi component of $P'_M$. By \cite{F3} Proposition 5.16,
\[\nu_{(\E_M)_{P'_M}\times^{P'_M}M'}=\nu_{\E_M}\in X_*(A)_{\Q}.\] By Lemma \ref{lemma_reduction two parabolic}, $(\E_M)_{P'_M}$ corresponds to a reduction of $\E_1$ to $P'$. Hence the semi-stability of $\E_1$ implies that
\[\langle\chi, \nu_{\E_M}\rangle\leq 0, \ \forall \chi\in X^*(M'/Z_G)^+.\] On the other hand, the equality (\ref{eqn_c_1(M)=0}) implies that $\nu_{\E_M}$ is a non-negative linear combination of simple coroots in $M$. Hence
\[\langle\chi, \nu_{\E_M}\rangle\geq 0, \ \forall \chi\in X^*(M'/Z_G)^+.\] It follows that $\nu_{\E_M}=0$.

The second assertion is obtained from the first assertion and the fact that $\M (G, \mu, b)_{\infty}$ (resp. $\M (P, \tilde{w}_0\mu, \tilde{b}_P)_\infty$) is the $\underline{\mathrm{Aut}(\E^G_{\e})}$-torsor (resp. $\underline{\mathrm{Aut}(\E^P_{w_0\e})}$-torsor) over $\Fc(G, \mu, b)^a$ (resp. $\Fc(P, \tilde{w}_0\mu, \tilde{b}_P)$).
\end{proof}
\begin{remark}This lemma was proved for unramified local Shimura datum of PEL type by Mantovan \cite{Man} section 8.2 and Shen \cite{Sh} corollary 6.4.
\end{remark}

In order to prove Proposition \ref{Prop_preimage_admissible}, we also need the following Lemmas.

\begin{lemma}\label{lemma_wa Levi in wa G}Suppose $G$ is quasi-split. We consider $\Fc(M, w_0\mu)$ as a subspace of $\Fc(G, \mu)$ via the natural injective morphism $\Fc(M, \tilde{w}_0\mu)\rightarrow \Fc(G, \mu)$. Then we have
\begin{eqnarray*}\Fc(M, \tilde{w}_0\mu, \tilde{b}_M)^a&\subseteq& \Fc(G, \mu, b)^a,\\
\Fc(M, \tilde{w}_0\mu, \tilde{b}_M)^{wa}&\subseteq& \Fc(G, \mu, b)^{wa}.\end{eqnarray*}
\end{lemma}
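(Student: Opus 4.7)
The natural embedding $\iota:\Fc(M,\tilde{w}_0\mu)\hookrightarrow \Fc(G,\mu)$ sends $mM_{\tilde{w}_0\mu}\mapsto m\tilde{w}_0 P_\mu$, so its image sits inside the Schubert cell $P(C)\tilde{w}_0 P_\mu(C)/P_\mu(C)$ and satisfies $\mathrm{pr}_{\tilde{w}_0}(\iota(y))=y$. Consequently, for $x=\iota(y)$, Lemma~\ref{lemma:reduction to P modification} applied to the reduction $\tilde{b}_M$ of $b$ produces a $P$-reduction $(\E_{b,x}^G)_P$ of $\E_{b,x}^G$ whose Levi quotient is $(\E_{b,x}^G)_P\times^P M\simeq \E_{\tilde{b}_M,y}^M$. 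I will derive both inclusions from this identification.

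\textbf{Admissible case.} Assume $y\in \Fc(M,\tilde{w}_0\mu,\tilde{b}_M)^a(C)$, so the $M$-bundle $\E_{\tilde{b}_M,y}^M$ is semistable of Newton slope $0$. Since the zero cocharacter is central in both $M$ and $G$, the induced $G$-bundle $\E_{\tilde{b}_M,y}^M\times^M G$ is also semistable of Newton slope $0$. Applying Corollary~\ref{Coro_HN equality among reduction to M} to the $P$-reduction $(\E_{b,x}^G)_P$ then gives $\nu_{\E_{b,x}^G}\preceq 0$; combined with the $G$-dominance of $\nu_{\E_{b,x}^G}$, this forces $\nu_{\E_{b,x}^G}=0$, so $x\in \Fc(G,\mu,b)^a(C)$.

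\textbf{Weakly admissible case.} Suppose $y\in \Fc(M,\tilde{w}_0\mu,\tilde{b}_M)^{wa}(C)$. By Proposition~\ref{proposition_wa} applied to $G$, I must verify $\deg\chi_\ast(\E_{b,x}^G)_Q\leq 0$ for every standard parabolic $Q\subseteq G$ with Levi $M_Q$, every reduction $b_{M_Q}$ of $b$, and every $\chi\in X^\ast(Q/Z_G)^+$. Let $W_Q\dot{w}W_P$ be the generic relative position of the sections $X\to Q\backslash\E_{b,x}^G$ and $X\to P\backslash\E_{b,x}^G$ corresponding to $(\E_{b,x}^G)_Q$ and the canonical $P$-reduction above. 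Mimicking the construction in the proof of Corollary~\ref{Coro_HN equality among reduction to M}, one extracts a reduction $(\E_{\tilde{b}_M,y}^M)_{P'_M}$ of the Levi quotient to the $M$-parabolic $P'_M:=M\cap \dot{w}^{-1}Q\dot{w}$, and the commutative diagram~\eqref{diagram in corollary} yields
\[
\deg\chi_\ast(\E_{b,x}^G)_Q \;=\; \deg (\chi\circ\mathrm{ad}\,\dot{w})_\ast(\E_{\tilde{b}_M,y}^M)_{P'_M}.
\]
After conjugating $P'_M$ to a standard $M$-parabolic and decomposing $\chi\circ\mathrm{ad}\,\dot{w}$ into its $Z_M$-central and $M/Z_M$-semisimple components, the semisimple component is controlled by Proposition~\ref{proposition_wa} applied to $M$ together with the wa hypothesis on $y$, while the central component is bounded using the Chern-class formula for $c_1^M(\E_{\tilde{b}_M,y}^M)$ and the assumption $[\tilde{b}_M]\in A(M,\tilde{w}_0\mu)$.

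\textbf{Main obstacle.} The principal technical hurdle is in the weakly admissible case: $\chi\in X^\ast(Q/Z_G)^+$ is only required to be trivial on $Z_G\subseteq Z_M$, so the Weyl-translate $\chi\circ\mathrm{ad}\,\dot{w}$ need not vanish on the larger center $Z_M$, and hence need not lie in $X^\ast(P'_M/Z_M)^+$. The decomposition into central and semisimple parts, combined with the restriction on the relative positions $\dot{w}$ coming from Lemma~\ref{lemma: adm in Shubert} (which forces any wa point to lie in the $\tilde{w}_0$-Schubert cell), is what makes the root-system bookkeeping go through.
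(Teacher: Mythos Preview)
Your argument for the admissible inclusion is correct and matches the paper's laconic ``the first assertion is clear.''

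The weakly admissible case, however, has a genuine gap. The reduction $(\E_{\tilde b_M,y}^M)_{P'_M}$ that you extract via the relative-position construction of Corollary~\ref{Coro_HN equality among reduction to M} is a reduction of the $M$-\emph{bundle} $\E_{\tilde b_M,y}^M$, but nothing in your argument shows it arises from an \emph{isocrystal} reduction of $\tilde b_M$ to the Levi of $P'_M$. Proposition~\ref{proposition_wa} only constrains the degrees of those $P'$-reductions of $\E_{\tilde b_M,y}^M$ that are induced by isocrystal reductions of $\tilde b_M$; a general bundle reduction is not covered. Hence the weak-admissibility hypothesis on $y$ gives you no control over $\deg(\chi\circ\mathrm{ad}\,\dot w)_\ast(\E_{\tilde b_M,y}^M)_{P'_M}$, and the argument collapses precisely at the step you describe as ``controlled by Proposition~\ref{proposition_wa} applied to $M$.'' Your appeal to Lemma~\ref{lemma: adm in Shubert} does not help here: that lemma locates weakly admissible $G$-points in the $w_0$-Schubert cell for $P$, but says nothing about the relative position $\dot w$ between the $P$- and $Q$-reductions of $\E_{b,x}^G$. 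You also leave unverified that the semisimple component of $\chi\circ\mathrm{ad}\,\dot w$ is $M$-dominant, which is needed to invoke the wa criterion at all.

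The paper closes this gap by working on the isocrystal level from the start, via Lemma~\ref{lemma_reduction to smaller Levi}. Given the two isocrystal reductions $\tilde b_M$ and $b_{M_Q}$ of $b$, that lemma (using the semisimplicity of $\mathrm{Isoc}_{\breve F|F}$) produces a $w\in W^\Gamma$ of minimal length in $W_{M_Q}^\Gamma w W_M^\Gamma$ together with $b_1\in M_1(\breve F)$, $M_1=M\cap w^{-1}M_Q w$, which is simultaneously a reduction of $\tilde b_M$ and of $\dot w^{-1}b_{M_Q}\sigma(\dot w)$. The resulting $Q_1$-reduction of $\E_{\tilde b_M,y}^M$ then \emph{does} come from an isocrystal reduction, so the wa criterion on $M$ applies to the semisimple part $\chi'_2$ of $\chi'=\chi\circ\mathrm{ad}\,\dot w$; and the minimal-length property of $w$ forces $w\beta\in\Phi_G^+$ for every $\beta\in\Phi_M^+$, which is exactly what makes $\chi'_2$ lie in $X^*(M_1/Z_M)^+$. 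The central part $\chi'_1=\mathrm{Av}_{W_M}(\chi')$ contributes zero because $[\tilde b_M]\in A(M,\tilde w_0\mu)$, as you correctly anticipated.
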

\begin{proof}The first assertion is clear. For the second one, let $x\in \Fc(M, \tilde{w}_0\mu, \tilde{b}_M)^{wa}(C)$ and let $x_G$ be its image via the natural morphism $\Fc(M, \tilde{w}_0\mu)\rightarrow \Fc(G, \mu)$.  Suppose $Q$ is a standard parabolic subgroup with Levi component $M_Q$. Let $b_{M_Q}$ be a reduction of $b$ to $M_Q$. Let $w\in W^{\Gamma}$ and $b_1\in M_1(\breve F)$ with $M_1=M\cap w^{-1}M_Qw$ as in Lemma \ref{lemma_reduction to smaller Levi}. Let $Q_1$ be the standard parabolic subgroup of $M$ with Levi component $M_1$. Since
\[(\E_{b_1}\times^{M_1}Q_1)\times^{Q_1, \mathrm{ad}(w)} Q\simeq \E_{b_{M_Q}}\times^{M_Q}Q \] by Lemma \ref{lemma_reduction to smaller Levi}, we have \[(\E_{b,x_G})_Q\simeq(\E_{\tilde{b}_M, x})_{Q_1}\times^{Q_1}Q,\]where $Q_1\rightarrow Q$ is induced by the $\mathrm{ad}{\dot w}$ and where $(\E_{b,x_G})_Q$ (resp.$(\E_{\tilde{b}_M, x})_{Q_1}$) is the reduction of $\E_{b,x_G}$ (resp. $\E_{\tilde{b}_{M}, x}$) to $Q$ (resp. $Q_1$) induced by the reduction $\E_{b_{M_Q}}\times^{M_Q} Q$ (resp. $\E_{b_1}\times^{M_1} Q_1$) of $\E_b$ (resp. $\E_{\tilde{b}_M}$) to $Q$ (resp. $Q_1$). For any $\chi\in X^*(Q/Z_G)^+$, we have $\deg\chi_*(\E_{b,x_G})_Q=\deg\chi'_*(\E_{\tilde{b}_M, x})_{Q_1}$ with $\chi'=\chi\circ \mathrm{ad}{\dot w}\in X^*(Q_1)$. Write $\chi'=\chi'_1+\chi'_2$ with $\chi'_1=\mathrm{Av}_{W_M}(\chi')$ the $W_M$ average of $\chi'$. Up to replacing $\chi$ by a multiple $m\chi$ for $m\in\mathbb{N}$ big enough, we may assume $\chi'_1\in (\Phi_M^\vee)^{\perp}=X^*(M)$, $\chi'_2\in X^*(M_1/Z_M)$. Moreover, the choice of $w$ implies $w\beta\in \Phi_G^+$ for any $\beta\in\Phi_M^+$. Hence $\chi'_2\in X^*(M_1/Z_M)^+$, here $^+$ stands for $M$-dominant. Then
\begin{eqnarray*}\deg\chi_*(\E_{b,x})_Q&=&\deg\chi'_*(\E_{\tilde{b}_M, x})_{Q_1}\\&=&\underbrace{\deg\chi'_{1*}(\E_{\tilde{b}_M, x})}_{=0}+\underbrace{\deg\chi'_{2*}(\E_{\tilde{b}_M, x})_{Q_1}}_{\leq 0}\leq 0\end{eqnarray*}
since $[\tilde{b}_M]\in B(M, \tilde {w}_0\mu)$ combined with the weak admissibility of $x$.
\end{proof}

Recall that $M$ and $\tilde{b}_M$ are as in Corollary \ref{coro-HN-decomp}. The following lemma reflects the fact that the category of isocrystals is semi-simple. If we can decompose an isocrystal in two different ways, then we can decompose it in a way that is finer than the previous two decompositions.

\begin{lemma}\label{lemma_reduction to smaller Levi}Suppose $G$ is quasi-split. Let $b_{M_Q}$ be a reduction of $b$ to $M_Q$, where $M_Q$ is the Levi component of a standard parabolic subgroup $Q$. Then there exist $w\in W^{\Gamma}$ (where $W^{\Gamma}$ can be identified with the relative Weyl group of $G$), $g_1\in M(\breve F)$, $g'_1\in (w^{-1}M_{Q}w)(\breve F)$ and $b_1\in M_1(\breve F)$ with $M_1=w^{-1}M_{Q}w\cap M$ such that
\begin{enumerate}
 \item $w$ is the minimal length element in $W_{M_Q}^{\Gamma}wW_M^{\Gamma}$;
 \item $(b_1, g_1)$ is a reduction of $\tilde{b}_M$ to $M_1$;
 \item $(b_1, g'_1)$ is a reduction of $\dot{w}^{-1}b_{M_Q}\sigma(\dot{w})$ to $M_1$ with $\dot{w}\in N(T)(\breve F)$ a representative of $w$.
 \end{enumerate}Moreover, $W_{M_Q}wW_M$ is the generic relative position between the reduction $\E_{\tilde{b}_M}\times^M P$ of $\E_b$ to $P$ and the reduction $\E_{b_{M_Q}}\times^{M_Q}Q$ of $\E_b$ to Q.
\end{lemma}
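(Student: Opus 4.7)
The plan is to analyse the Bruhat position of the transition element between the two given reductions, and then to descend to the common Levi $M_1 = w^{-1} M_Q w \cap M$ via a Lang--Steinberg-type argument.

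First, I would choose $\tilde g, g \in G(\breve F)$ with $b = \tilde g\,\tilde b_M\,\sigma(\tilde g)^{-1}$ and $b = g\,b_{M_Q}\,\sigma(g)^{-1}$, and set $h := g^{-1}\tilde g \in G(\breve F)$, so that
\[
h\,\tilde b_M\,\sigma(h)^{-1} = b_{M_Q}.
\]
The next step would be to apply the Bruhat decomposition $G(\breve F) = \bigsqcup_{w} Q(\breve F)\,\dot w\,P(\breve F)$, indexed by the minimal length representatives of $W_{M_Q}\backslash W/W_M$ (which is available since $G$ is quasi-split), writing $h = q\,\dot w\,p$ for some $q \in Q(\breve F)$ and $p \in P(\breve F)$.

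Rewriting the defining identity as
\[
q^{-1}\,b_{M_Q}\,\sigma(q) = \dot w\bigl[p\,\tilde b_M\,\sigma(p)^{-1}\bigr]\sigma(\dot w)^{-1},
\]
I note that the left hand side lies in $Q(\breve F)$ while the right hand side lies in the coset $\dot w\,P\,\sigma(\dot w)^{-1}$. Non-emptiness of this intersection forces $\sigma(w)$ to lie in the same double coset $W_{M_Q} w W_M$, so minimality of $w$ gives $\sigma(w) = w$, hence $w \in W^{\Gamma}$. By quasi-splitness one can then pick $\dot w \in G(F)$, so $\sigma(\dot w) = \dot w$; this takes care of condition~(1).

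For conditions~(2) and~(3), observe that because $w$ is a minimal length representative, $M_1 = w^{-1} M_Q w \cap M$ is a standard Levi in both $M$ and $w^{-1} M_Q w$, and the parabolic $w^{-1} Q w \cap P$ has $M_1$ as its Levi component. Conjugating the rewritten identity by $\dot w^{-1}$ yields
\[
\dot w^{-1}\,q^{-1}\,b_{M_Q}\,\sigma(q)\,\dot w = p\,\tilde b_M\,\sigma(p)^{-1} \in \bigl(w^{-1}Q w \cap P\bigr)(\breve F).
\]
I would then decompose $p = m_0 u_0$ with $m_0 \in M(\breve F)$, $u_0 \in R_u(P)(\breve F)$, and $\dot w^{-1} q \dot w = m'_0 u'_0$ with $m'_0 \in (w^{-1}M_Q w)(\breve F)$, $u'_0 \in R_u(w^{-1}Q w)(\breve F)$. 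A direct expansion, using that $\tilde b_M$ (respectively $\dot w^{-1} b_{M_Q}\sigma(\dot w)$) lies in the Levi $M$ (respectively $w^{-1}M_Q w$) and hence normalises the relevant unipotent radical, should force the unipotent contributions to cancel and the common Levi image to lie in $M_1$. Setting $b_1$ to be this common Levi image, $g_1 := m_0$ and $g'_1 := m'_0$ then produces the required reductions, proving (2) and (3).

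Finally, the ``moreover'' assertion should follow from the generic position argument used in the proof of Corollary~\ref{Coro_HN equality among reduction to M}: the composition $X \to Q\backslash\E_b \times_X P\backslash\E_b \to W_{M_Q}\backslash W / W_M$ sends the generic point of $X$ precisely to the Bruhat class of $h$, namely $W_{M_Q} w W_M$. The main obstacle I foresee is the cancellation of unipotent radical contributions in the third step, which is essentially a Lang--Steinberg-type statement: one must verify carefully that the Levi projections $g_1$ and $g'_1$ genuinely yield reductions down to $M_1$ rather than merely to the ambient parabolics.
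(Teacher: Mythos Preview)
Your Bruhat-decomposition approach is viable and, once the unipotent step is handled, actually proves a slightly more general statement than the paper needs (you never use that $\tilde b_M$ arises from the Newton centralizer). But the route is genuinely different from the paper's, and the part you flag as ``the main obstacle'' is indeed where your sketch is not yet a proof.

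Concretely, your claim that setting $g_1:=m_0$ and $g_1':=m_0'$ already gives reductions to $M_1$ is not correct. From $e:=p\,\tilde b_M\,\sigma(p)^{-1}=\dot w^{-1}q^{-1}b_{M_Q}\sigma(q)\dot w$ one gets $e\in P\cap w^{-1}Qw$, and for minimal $w$ this group has Levi $M_1$; so there is a well-defined $M_1$-projection $b_1$ of $e$. The projection of $e$ to $M$ along $P\to M$ is $d:=m_0\tilde b_M\sigma(m_0)^{-1}$, but $d$ lands only in the parabolic $P'_M=M\cap w^{-1}Qw$ of $M$, not in $M_1$. You then need the standard fact $B(P'_M)\xrightarrow{\sim}B(M_1)$ (vanishing of $\sigma$-cohomology of unipotent radicals over $\breve F$) to $\sigma$-conjugate $d$ to $b_1$ inside $M$, and similarly on the $w^{-1}M_Qw$ side; this supplies $g_1$ and $g_1'$, which differ from $m_0,m_0'$ by elements of the relevant unipotent radicals. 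Also, ``pick $\dot w\in G(F)$'' is slightly too strong: from $w\in W^{\Gamma}$ you only get $\dot w\in N(T)(\breve F)$ with $\dot w^{-1}\sigma(\dot w)\in T(\breve F)$, which suffices but needs to be carried through the computation.

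The paper proceeds quite differently and exploits the hypothesis you ignore: since $\tilde b_M$ is induced from the canonical reduction of $b$ to $M_b=\mathrm{Cent}_H([\nu_b])$, one may assume $M=w_0M_bw_0^{-1}$ and then align Newton cocharacters. After $\sigma$-conjugating $b_{M_Q}$ so that $\nu_{b_{M_Q}}$ is $F$-rational, $A$-valued and $M_Q$-antidominant, one chooses $\tilde w\in W^{\Gamma}$ making $\tilde w^{-1}\nu_{b_{M_Q}}$ equal to $w_0[\nu_b]$; then $\dot{\tilde w}^{-1}b_{M_Q}\sigma(\dot{\tilde w})$ has its Newton centralizer inside $\tilde M_1=\tilde w^{-1}M_Q\tilde w\cap M$, giving $\tilde b_1\in\tilde M_1(\breve F)$ directly. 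The matching with $\tilde b_M$ is then obtained not by a group-theoretic cancellation but by comparing invariants: one shows $[\tilde b_1]=[\tilde b_M]$ in $B(M)$ via the injectivity of $(\nu,\kappa_M)$ together with $\pi_1(M)_{\Gamma,\mathrm{tor}}\hookrightarrow\pi_1(G)_{\Gamma,\mathrm{tor}}$. Only at the end does one pass to the minimal-length representative $w$ in $W_{M_Q}^{\Gamma}\tilde wW_M^{\Gamma}$, conjugating $\tilde b_1$ accordingly. So the paper trades your Bruhat/Lang--Steinberg argument for a Newton-cocharacter alignment plus the Kottwitz classification; your method is more elementary and uniform, but requires the structural facts about $P\cap w^{-1}Qw$ that you left as ``should force''.
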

\begin{proof} The last assertion is implied by the conditions (2) and (3). We first show that there exists elements satisfying condition (2) and (3). More precisely, we claim that there exists $\tilde{w}\in W^{\Gamma}$, $\tilde g_1\in M(\breve F)$, $\tilde{g}'_1\in \tilde{w}^{-1}M_{Q}\tilde{w}(\breve F)$ and $\tilde{b}_1\in \tilde{M}_1(L)$ with $\tilde{M}_1=\tilde{w}^{-1}M_{Q}\tilde{w}\cap M$ such that
\begin{enumerate}
 \item [(2)] $(\tilde{b}_1, \tilde{g}_1)$ is a reduction of $\tilde{b}_M$ to $\tilde{M}_1$;
 \item[(3)] $(\tilde{b}_1, \tilde{g}'_1)$ is a reduction of $\dot{\tilde{w}}^{-1}b_{M_Q}\sigma(\dot{\tilde{w}})$ to $\tilde{M}_1$ with $\dot{\tilde{w}}\in N(T)(\breve F)$ a representative of $\tilde{w}$.
 \end{enumerate}
Note that a representative $\dot{\tilde{w}}$ of $\tilde{w}$ can be chosen in $N(T)(\breve F)$ by Steinberg's theorem (\cite{SerreCohoGal} III 2.3).

By definition, $\tilde{b}_M$ is induced from $w_0b_{M_b}w_0^{-1}$ via the natural inclusion $w_0M_bw_0^{-1}\subseteq M$. Therefore, without loss of generality, we may assume $M=w_0M_bw_0^{-1}$. Since $b_{M_Q}$ is a reduction of $b$ to $M_Q$, Up to $\sigma$-conjugation, we may assume $\nu_{b_{M_Q}}:\mathbb{D}\rightarrow M_Q$ is defined over $F$ and has image in the split maximal torus $A$ which is $M_Q$-anti-dominant. Choose $\tilde{w}\in W^{\Gamma}$ such that $\tilde{w}^{-1}\nu_{b_{M_Q}}=w_0[\nu_b]$ is $G$-anti-dominant. Then $\mathrm{Cent}_{\tilde{w}^{-1}M_Q\tilde{w}}(\nu_{\dot{\tilde{w}}^{-1}b_{M_Q}\sigma(\dot{\tilde w})})=\tilde{w}^{-1} M_Q \tilde{w}\cap M=\tilde{M}_1$ and $\dot{\tilde{w}}^{-1}b_{M_Q}\sigma(\dot{\tilde w})$ has a canonical reduction $(\tilde{b}_1, \tilde{g}'_1)$ to $\tilde{M}_1$. Then (3) follows.

Let $\tilde{b}'_1$ be the image of $\tilde{b}_1$ via the inclusion $\tilde{M}_1\subseteq M$. For (2), it suffices to show $[\tilde{b}'_1]=[\tilde{b}_M]$ in $B(M)$. Clearly, \[ [\nu_{\tilde{b}'_1}]_M=[\nu_{\tilde{b}_M}]_M=(w_0[\nu_b])_{M-dom}\in \N (M).\] By the injectivity of the map (\ref{eqn_invariants Kottwitz})
\[(\nu, \kappa_M): B(M)\rightarrow \mathcal{N}(M)\times \pi_1(M)_{\Gamma},\]it suffices to show $\kappa_M(\tilde{b}'_1)=\kappa_M(\tilde{b}_M)\in \pi_1(M)_{\Gamma}$. Since
\[\kappa_M(a)=\nu_a\text { in }\pi_1(M)_{\Gamma}\otimes_{\Z}\Q,\ \  \forall a\in M(\breve F),\]we have $\kappa_M(\tilde{b}'_1)-\kappa_M(\tilde{b}_M)\in\pi_1(M)_{\Gamma,tor}$. As $\tilde{b}_1$ and $\tilde{b}_M$ are both reductions of $b$, we have
   \[\kappa_G(\tilde{b}'_1)=\kappa_G(\tilde{b}_1)=\kappa_G(\tilde{b}_M)\text{ in }\pi_1(G)_{\Gamma}.\] The result follows by the injectivity of the map
   $\pi_1(M)_{\Gamma,tor}\rightarrow\pi_1(G)_{\Gamma, tor}$ (cf. proof of Lemma~\ref{lemma_HN-decomp}).

Let $w=w_1\tilde{w}w_2$ be the unique minimal length element in $W^{\Gamma}_{M_Q}\tilde{w}W_M^{\Gamma}$ with $w_1\in W_{M_Q}^{\Gamma}$ and $w_2\in W_M^{\Gamma}$. Then $w\in W^{\Gamma}$ and we can choose representatives $\dot w_2\in M(\breve F)$, $\dot w\in G(\breve F)$ of $w_2$ and $w$ respectively again by Steinberg's theorem. Let
\begin{eqnarray*}b_1&:=&\dot {w}_2^{-1}\tilde b_1 \sigma(\dot{w}_2)\in M_1(\breve F),\\
g_1&:=&\tilde{g}_1\dot{w}_2\in M(\breve F),\\
g'_1&:=&\dot {w}^{-1}\dot{\tilde{w}}\tilde g'_1 \dot{w}_2\in w^{-1}M_Qw (\breve F).\end{eqnarray*}They satisfy the desired properties (1)-(3).
\end{proof}

Now we are ready to prove the main result of this section.

\begin{proof}[Proof of proposition \ref{Prop_preimage_admissible}]We only deal with the case $[b]\in B(G, \mu)$. The proof for $[b]\in A(G, \mu)$ is similar. By Lemma \ref{lemma: adm in Shubert} and Lemma \ref{Lemma_tilde_varphi_isom}, we have the equality of morphisms: $\mathrm{pr}_{\tilde w_0}=\tilde\xi_M\circ {\tilde\xi_G}^{-1}$ and \[\Fc(G, \mu, b)^a\subseteq \mathrm{pr}_{w_0}^{-1}(\Fc(M, w_0\mu, \tilde{b}_M)^{a}).\] Now it remains to show that $\E_{b,x}$ is a trivial $G$-bundle for any $x\in \mathrm{pr}_{\tilde w_0}^{-1}(\Fc(M, \tilde w_0\mu, b)^a(C))$. By Lemma~\ref{lemma:reduction to P modification}, $(\E_{b,x})_P \times^P M \simeq \E_{\tilde{b}_M,\mathrm{pr}_{\tilde w_0} (x)}$ which is a trivial $M$-bundle. The result then follows by Corollary \ref{Coro_HN equality among reduction to M}.

The proof for the weak admissibility is similar. Suppose $x\in \Fc(G, \mu, b)^{wa}(C)$. Then $x\in P(C)w_0P_{\mu}(C)/P_{\mu}(C)$ by Lemma \ref{lemma: adm in Shubert}. We show that $\mathrm{pr}_{\tilde w_0}(x)\in\Fc(M, \tilde w_0\mu, \tilde{b}_M)^{wa}(C)$. Suppose $M'\subset M$ is a standard Levi subgroup of $M$ with $b'\in M'(L)$ a reduction of $\tilde{b}_M$ to $M'$. Let $P'_M\subseteq M$ (resp. $P'\subseteq G$) be the standard parabolic subgroup of $M$ (resp. $G$) with Levi component $M'$. We need to show that
\begin{eqnarray*}\deg \chi_*(\E_{\tilde{b}_M, \mathrm{pr}_{\tilde w_0}(x
)})_{P'_M}\leq 0, \forall \chi\in X^*(P'_M/Z_M)^+\end{eqnarray*} where $(\E_{\tilde{b}_M, \mathrm{pr}_{\tilde w_0}(x)})_{P'_M}$ is the reduction to $P'_M$ of $\E^M_{b'_{M}, \mathrm{pr}_{\tilde w_0}(x)}$ induced by the reduction $\E_{b'}\times^{M'} P'_M$ of $\E_{\tilde{b}_M}$. By Remark \ref{remark_vector v}, it's equivalent to show
\begin{eqnarray}\label{eqn_wa} \mathrm{Av}_{W_{M'}}(\nu_{\E_{M'}})\leq_M 0 \text{ in } X_*(A)_{\Q},\end{eqnarray}
 where $\E_{M'}:=(\E_{\tilde{b}_M, \mathrm{pr}_{w_0}(x)})_{P'_M}\times^{P'_M} M'$. By Theorem \ref{theoSchi},
\begin{eqnarray}\label{eqn_wa_Av1} \mathrm{Av}_{W_{M'}}(\nu_{\E_{M'}})\leq_M \nu_{\E_M}  \text{ in } X_*(A)_{\Q}, \end{eqnarray}
where $\E_{M}:=\E_{\tilde{b}_M, \mathrm{pr}_{w_0}(x)}$. On the other hand, by Lemma \ref{lemma:reduction to P modification} and Lemma \ref{lemma_reduction two parabolic}, we have
\[\E_{M'}=(\E_{\tilde{b}_M, \mathrm{pr}_{w_0}(x)})_{P'_M}\times^{P'_M} M'\simeq((\E_{b,x})_P\times^P M)_{P'_M}\times^{P'_M} M'\simeq (\E_{b,x})_{P'}\times^{P'}M',\]
 where  $(\E_{b, x})_{P'}$ is the reduction to $P'$ of $\E_{b, x}$ induced by the reduction $\E_{b'}\times^{M'} P'$ of $\E_b$. The weak semi-stability of $\E_{b,x}$ implies that
\begin{eqnarray}\label{eqn_wa_Av2}\mathrm{Av}_{W_{M'}}(\nu_{\E_{M'}})\leq_G 0 \text{ in } X_*(A)_{\Q}. \end{eqnarray} Hence the inequality (\ref{eqn_wa}) follows from (\ref{eqn_wa_Av1}) and (\ref{eqn_wa_Av2}) combined with the fact that \[\mathrm{Av}_{W_M}(\nu_{\E_M})=0\] since $[\tilde{b}_M]\in B(M, \tilde{w}_0\mu)$.

For the other side, suppose $x\in  P(C)w_0P_{\mu}(C)/P_{\mu}(C)$ with $\mathrm{pr}_{\tilde w_0}(x)\in\Fc(M, \tilde w_0\mu, \tilde{b}_M)^{wa}(C)$. We want to show $x\in \Fc(G, \mu, b)^{wa}(C)$. Suppose $Q$ is a standard parabolic subgroup of $G$ with Levi-component $M_Q$. Let $b_{M_Q}$ be a reduction of $b$ to $M_Q$. We need to show
\begin{eqnarray}\label{eqn_wa2}\deg\chi_*(\E_{b,x})_Q\leq 0, \forall \chi\in X^*(Q/Z_G)^+,\end{eqnarray} where $(\E_{b,x})_Q$ is the reduction to $Q$ of $\E_{b, x}$ induced by the reduction $\E_{b_{M_Q}}\times^{M_Q}Q$ to $Q$ of $\E_b$.
By the proof of Corollary \ref{Coro_HN equality among reduction to M}, the reduction $(\E_{b,x})_Q$ to $Q$ of $\E_{b,x}$ induces a reduction \[(\E_{\tilde{b}_M, \mathrm{pr}_{w_0}(x)}\times^M G)_{Q}\] of $\E_{\tilde{b}_M, \mathrm{pr}_{w_0}(x)}\times^M G$ to $Q$.
Moreover, we have
\[\deg \chi_*(\E_{b,x})_Q=\deg \chi_*(\E_{\tilde{b}_M, \mathrm{pr}_{w_0}(x)}\times^M G)_Q.\] Hence the inequality (\ref{eqn_wa2}) follows by the weak admissibility of $\mathrm{pr}_{w_0}(x)$ and Lemma \ref{lemma_wa Levi in wa G}.
\end{proof}

\section{The action of $\tilde{J}_b$ on the modifications of $G$-bundles}\label{section_group action on the modification}

Now we state the main results of this article. We first state the main result in the quasi-split case.

Suppose $G$ is quasi-split and $[b]\in A(G, \mu)$. Recall that by Remark \ref{AGmu}, we may assume $[b]\in B(G, \mu^{\sharp}+\e, \mu^{\diamond})$ for some $\e\in \pi_1(G)_{\Gamma, tor}$. By Lemma \ref{lemma_HN-decomp} (cf. also Corollary \ref{coro-HN-decomp}), there exists a unique standard Levi subgroup $M$ of $G$ with a reduction $b_M$ of $b$ to $M$ such that $(M, \mu, b)$ is HN-indecomposable and $\e$ is in the image of the injective map $\pi_1(M)_{\Gamma, tor}\rightarrow \pi_1(G)_{\Gamma, tor}$. So we may consider $\e$ as an element in $\pi_1(M)_{\Gamma, tor}$. In the following, sometimes we also identify $b_M$ with $b$.

\begin{theorem}\label{theo_main} Suppose $G$ is quasi-split and $\mu$ is minuscule. Let $[b]\in B(G, \mu^{\sharp}+\e, \mu^{\diamond})$. Suppose $M$ is the standard Levi subgroup of $G$ such that $(M, \mu, b)$ is Hodge-Newton-indecomposable. Then the equality $\Fc(G,\mu, b)^{wa}=\Fc(G,\mu, b)^{a}$ holds if and only if  $B(M,\mu^{\sharp}+\e, \mu^{\diamond})$ is fully Hodge-Newton-decomposable and $[b]$ is basic in $B(M)$.

In particular, for $[b]\in B(G, \mu)$, the equality $\Fc(G,\mu, b)^{wa}=\Fc(G,\mu, b)^{a}$ holds if and only if the pair $(M, \mu)$ is fully Hodge-Newton-decomposable and $[b]$ is basic in $B(M)$.
\end{theorem}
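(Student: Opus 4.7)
The strategy is to combine the Levi-reduction result Proposition~\ref{Prop_preimage_admissible}, the non-basic obstruction Proposition~\ref{Prop_not basic}, and the basic case Theorem~\ref{theo:CFS basic}; both directions of the equivalence will follow from the same chain of reductions.

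First I would set up the reduction to the HN-indecomposable Levi. If $(G,\mu,b)$ is already Hodge-Newton-indecomposable, then $M=G$ and no reduction is needed. Otherwise, Corollary~\ref{coro-HN-decomp} produces (up to a $w_0$-twist) exactly the Levi $M$ featured in the theorem, and Proposition~\ref{Prop_preimage_admissible} then yields the equivalence
\[
\Fc(G,\mu,b)^{wa}=\Fc(G,\mu,b)^{a}\ \Longleftrightarrow\ \Fc(M,\tilde{w}_{0}\mu,\tilde{b}_{M})^{wa}=\Fc(M,\tilde{w}_{0}\mu,\tilde{b}_{M})^{a}.
\]
After this reduction the problem concerns the HN-indecomposable triple $(M,\tilde{w}_{0}\mu,\tilde{b}_{M})$ on $M$.

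I would then split according to the basicness of $[b]$ in $B(M)$. If $[b]$ is \emph{not} basic in $B(M)$, Proposition~\ref{Prop_not basic} applied to $(M,\tilde{w}_{0}\mu,\tilde{b}_{M})$ produces a weakly admissible non-admissible point on $\Fc(M,\tilde{w}_{0}\mu,\tilde{b}_{M})$, which pulls back through the reduction to give $\Fc(G,\mu,b)^{wa}\neq\Fc(G,\mu,b)^{a}$; this establishes the necessity of basicness in $B(M)$. If instead $[b]$ is basic in $B(M)$, then by Lemma~\ref{lemma_BGmu reduction Levi}(3) we may realize $[b]_{M}$ inside the generalized Kottwitz set $B(M,(w\mu)^{\sharp,M}+\e,(w\mu)^{\diamond,M})$ for a suitable $w\in W$, and since $w\mu$ is minuscule and $[b]_{M}$ is basic, Theorem~\ref{theo:CFS basic} applied on $M$ gives the equality on $M$ if and only if this generalized Kottwitz set is fully Hodge-Newton-decomposable; unpacking the conjugation by $w$ and the identification of torsion classes, this translates to full HN-decomposability of $B(M,\mu^{\sharp}+\e,\mu^{\diamond})$. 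The special case $\e=0$ then yields the cleaner criterion that $(M,\mu)$ is fully HN-decomposable.

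The principal technical obstacle I anticipate is the bookkeeping: one must carefully track the cocharacters $\mu$, $w_{0}\mu$ and $\tilde{w}_{0}\mu$ and their $W$-translates through each step, identify the torsion class $\e$ consistently in $\pi_{1}(M)_{\Gamma,\mathrm{tor}}$ and $\pi_{1}(G)_{\Gamma,\mathrm{tor}}$ via the injection of Lemma~\ref{lemma_BGmu reduction Levi}(1), and justify the extension of Theorem~\ref{theo:CFS basic} from Kottwitz sets $B(G,\mu)$ to the generalized Kottwitz sets appearing after reduction. This last point should follow from Lemma~\ref{lemma_BGmu reduction Levi}(2), which produces, for any basic class in a generalized Kottwitz set, an honest representative inside some $B(M,\mu')$ with $\mu'$ a $W$-translate of $\mu$, allowing direct invocation of the CFS theorem on $M$.
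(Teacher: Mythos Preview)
Your overall strategy matches the paper's proof exactly: reduce via Proposition~\ref{Prop_preimage_admissible}, then in the non-basic case invoke Proposition~\ref{Prop_not basic}, and in the basic case invoke the Fargues--Rapoport theorem of \cite{CFS}. Two points of bookkeeping deserve correction. First, the Levi appearing in Proposition~\ref{Prop_preimage_admissible} and Corollary~\ref{coro-HN-decomp} is not the $M$ of the theorem statement but its $w_0$-conjugate $\tilde M=w_0Mw_0^{-1}$; the paper uses Lemma~\ref{lemma_levi_oppo} (not Lemma~\ref{lemma_BGmu reduction Levi}) to transport the conditions ``$[b]$ basic in $B(M)$'' and ``$B(M,\mu^\sharp+\e,\mu^\diamond)$ fully HN-decomposable'' to the corresponding statements for $(\tilde M,\tilde w_0\mu,\tilde b_M)$. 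Second, your proposed workaround for extending the CFS theorem to generalized Kottwitz sets via Lemma~\ref{lemma_BGmu reduction Levi}(2) does not work as stated: that lemma starts from an honest Kottwitz set $B(G,\mu)$ and passes to a Levi, it does not strip a torsion class $\e$ within a fixed group. The paper instead simply invokes ``\cite{CFS} Theorem~6.1 and its proof for the non quasi-split case'', the point being that the non-quasi-split argument in \cite{CFS} already proceeds by inner twisting to a quasi-split form and thereby introduces exactly such a torsion shift $\e$, so the required generalization is already contained in that proof.
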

\begin{remark}For $\mathrm{GL}_n$ and $[b]\in B(G, \mu)$, this theorem is proved by Hartl \cite{Har} Theorem 9.3.\end{remark}

We have a similar description when the group $G$ is non-quasi-split .

\begin{theorem}\label{theo_main_general}Suppose $\mu$ is minuscule and $[b]\in B(G, \mu)$. Let $[b^H]\in B(H)$ be the image of $[b]$ via the natural morphism $B(G)\simeq B(H)$. Let $M^H$ be the standard Levi subgroup of $H$ with a reduction $b^H_M$ of $b^H$ to $M^H$ such that  $[b^H_M]\in A(M^H, \mu)$ and $(M^H, \mu, b^H_M)$ is HN-indecomposable. Then \begin{enumerate}\item the Levi-subgroup $M^H$ of $H$ corresponds to a Levi-subgroup $M$ of $G$ via the inner twisting,
\item the equality $\Fc(G,\mu, b)^{wa}=\Fc(G,\mu, b)^{a}$ holds if and only if the pair  $(M, \mu)$ is fully HN-indecomposable and $[b_M]$ is basic in $B(M)$, where $[b_M]$ corresponds to $[b_M^H]$ via $B(M)\simeq B(M^H)$.
\end{enumerate}
\end{theorem}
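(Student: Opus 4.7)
The plan is to reduce Theorem~\ref{theo_main_general} to the quasi-split case already handled by Theorem~\ref{theo_main}, using the compatibility of the (weakly) admissible loci with inner twistings.

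First, I would establish part (1). The inner twisting $G_{\breve F}\iso H_{\breve F}$ sends the $\sigma$-conjugacy class $[b]$ to $[b^H]$, and identifies the Newton cocharacter $[\nu_b]$ with $[\nu_{b^H}]$; in particular it matches the centralizer $M_b$ with $M_{b^H}$. The strict standard Levi $M^H$ furnished by the HN-indecomposable reduction of $b^H$ is a Levi containing $M_{b^H}$. Since $M_{b^H}$ is already defined over $F$ (the Newton cocharacter is $F$-rational) and the inner cocycle defining $G$ from $H$ takes values in $M_{b^H}$ (up to modification by a $1$-coboundary, as Kottwitz's construction of the inner form from $b$ shows), the inner twist restricts to an inner twist $M_{\breve F}\iso M^H_{\breve F}$ over some $F$-rational Levi $M\subset G$ containing the centralizer of the slope morphism of $b$. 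This gives part~(1) and simultaneously produces a reduction $b_M\in M(\breve F)$ of $b$ matched with $b^H_M$ under the twisting.

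Second, I invoke the two identifications already recorded earlier in the paper: from Remark~\ref{remark_reduction wa to qs} and its analogue for the admissible locus, one has $\Fc(G,\mu,b)^{wa}=\Fc(H,\mu,b^H)^{wa}$ and $\Fc(G,\mu,b)^{a}=\Fc(H,\mu,b^H)^{a}$. These reduce the equality $\Fc(G,\mu,b)^{wa}=\Fc(G,\mu,b)^{a}$ to the analogous equality for the quasi-split datum $(H,\mu,b^H)$. Applying Theorem~\ref{theo_main} to this datum (using that the Levi $M^H$ is exactly the one produced by HN-indecomposability of $b^H$) translates the equality into the criterion that $(M^H,\mu)$ is fully Hodge-Newton-decomposable and $[b^H_M]$ is basic in $B(M^H)$.

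Third, I transfer these two conditions back through the inner twisting $M_{\breve F}\iso M^H_{\breve F}$. Fully Hodge-Newton-decomposability depends only on the pair consisting of the absolute root datum with Galois action and the geometric conjugacy class of $\mu$; these data agree for $M$ and $M^H$ because inner forms share the same based root datum with Galois action. Hence $(M,\mu)$ is fully HN-decomposable iff $(M^H,\mu)$ is. Likewise, the inner twisting induces a bijection $B(M)\iso B(M^H)$ that matches $[b_M]$ with $[b^H_M]$ and preserves basicity (the basic locus is characterized by the centrality of the Newton cocharacter, which is an intrinsic condition). Combining these translations with the reduction in the second step yields the stated criterion.

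The main obstacle is the clean verification of part~(1): in the non-quasi-split setting $M$ need not be a standard Levi of $G$ (there is in general no canonical Borel of $G$ containing a maximal $F$-split torus compatibly with that of $H$), so one must spell out in what sense $M^H$ ``corresponds to'' an $F$-rational Levi subgroup $M\subset G$, and show that the two reductions $b_M$ and $b^H_M$ are genuinely interchanged by the twisting. The rest is formal once the dictionary between $G$ and $H$ has been set up, and the classification of fully HN-decomposable pairs from \cite{GoHeNi} plus Theorem~\ref{theo_main} provide the substantive input.
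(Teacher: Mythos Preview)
Your overall strategy---reduce to the quasi-split form $H$ via the identifications of Remark~\ref{remark_reduction wa to qs} and its admissible-locus analogue, apply Theorem~\ref{theo_main}, then transport the criterion back through the inner twisting---is exactly the paper's. But there is a genuine gap in your second step.

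The bijection $B(G)\simeq B(H)$ does \emph{not} in general carry $B(G,\mu)$ to $B(H,\mu)$: it shifts the Kottwitz invariant by a torsion element. Concretely, after passing to the adjoint group one may realize $G=J_{b^*}$ for some basic $[b^*]\in B(H)_{basic}$, and then $[b^H]\in B(H,\mu^\sharp+\kappa(b^*),\mu^\diamond)$ with $\kappa(b^*)\in\pi_1(H)_{\Gamma,tor}$ possibly nonzero. Consequently Theorem~\ref{theo_main} applied to $(H,\mu,b^H)$ yields the criterion that $B(M^H,\mu^\sharp+\kappa(b^*),\mu^\diamond)$ be fully HN-decomposable, \emph{not} that the pair $(M^H,\mu)$ be. Your use of the ``In particular'' clause of Theorem~\ref{theo_main} presupposes $\e=0$ and is therefore unjustified.

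The paper closes this gap as follows. One checks via Lemma~\ref{lemma_BGmu reduction Levi} that $\kappa(b^*)$ lies in the image of $\pi_1(M^H)_{\Gamma,tor}\hookrightarrow\pi_1(H)_{\Gamma,tor}$, chooses $[b^*_M]\in B(M^H)_{basic}$ with $\kappa_{M^H}(b^*_M)=\kappa(b^*)$, and observes that $M:=J_{b^*_M}$ is simultaneously a pure inner form of $M^H$ and a Levi subgroup of $G=J_{b^*}$; this is what makes part~(1) precise and supersedes your more informal argument about the cocycle taking values in $M_{b^H}$. The inner twisting by $b^*_M$ then identifies $B(M,\mu)$ with $B(M^H,\mu^\sharp+\kappa(b^*),\mu^\diamond)$, and since this identification preserves Newton points it preserves HN-(in)decomposability; hence the criterion from Theorem~\ref{theo_main} translates directly into ``$(M,\mu)$ is fully HN-decomposable'', without any detour through $(M^H,\mu)$. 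Your third step, comparing $(M,\mu)$ with $(M^H,\mu)$ via shared based root data, is thus both unnecessary and a symptom of having missed the $\kappa(b^*)$-shift.
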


In order to prove the main theorems, we need some preparations.

\begin{lemma}\label{lemma_levi_oppo}Suppose $M$ is a standard Levi subgroup of $H$ defined over $F$.  Let $\dot w_0\in G(\breve F)$ be a representative of $w_0$. Then the map
\[\begin{split} M(\breve F)&\simeq (w_0Mw_0^{-1})(\breve F)\\ g &\mapsto \dot w_0 g\sigma(\dot w_0)^{-1} \end{split}\] induces the bijections \[\begin{split}B(M)&\simeq B(w_0 Mw_0^{-1}),\\ B(M)_{basic}&\simeq B(w_0 Mw_0^{-1})_{basic},\\ B(M, \mu)&\simeq B(w_0Mw_0^{-1}, \tilde{w}_0\mu),\\ B(M, \mu^{\sharp}+\e, \mu^{\diamond})&\simeq B(w_0Mw_0^{-1}, (\tilde{w}_0\mu)^\sharp+w_0\e, (\tilde{w}_0\mu)^{\diamond}),\end{split}\] where $\tilde{w}_0\mu:=(w_0\mu)_{w_0Mw_0^{-1}-dom}$ is the $w_0Mw_0^{-1}$-domominant representative of $w_0\mu$ in its $W_{w_0Mw_0^{-1}}$-orbit. Moreover $B(M, \mu^{\sharp}+\e, \mu^{\diamond})$ is fully HN-decomposable if and only if so is \\ $B(w_0Mw_0^{-1}, (\tilde{w}_0\mu)^\sharp+w_0\e, (\tilde{w}_0\mu)^{\diamond})$. \end{lemma}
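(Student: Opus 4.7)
My plan is to exploit the fact that $g \mapsto \dot w_0 g \sigma(\dot w_0)^{-1}$ is $\sigma$-conjugation in $G(\breve F)$, hence induces the identity on $B(G)$, while carrying $M(\breve F)$ into $M'(\breve F)$ where I set $M' := w_0 M w_0^{-1}$. The preparatory point is that $w_0 \in W^\Gamma$, since $w_0$ is characterized by the Galois-invariant property of sending $\Phi^+$ to $\Phi^-$; hence $\dot w_0 \sigma(\dot w_0)^{-1}$ projects to $w_0 \sigma(w_0)^{-1} = 1$ in $W$, so this element lies in $T(\breve F) \subseteq M'(\breve F)$, and the map therefore lands in $M'(\breve F)$. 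A direct calculation with $y := \dot w_0 x \dot w_0^{-1} \in M'(\breve F)$ shows that $\sigma$-conjugate elements in $M$ map to $\sigma$-conjugate elements in $M'$, yielding the bijection $B(M) \simeq B(M')$ with symmetric inverse.

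The next step is to track how the two invariants transform. Setting $g' := \dot w_0 g \sigma(\dot w_0)^{-1}$, the slope morphism of $g'$ into $M'$ is $\mathrm{Int}(\dot w_0) \circ \nu_g$; using that $w_0 \cdot [\nu_g]_M$ is $M'$-anti-dominant (a direct check based on $w_0^{-1} \Phi_{M'}^+ = \Phi_M^-$), one obtains $[\nu_{g'}]_{M'} = w_0 w_0^M \cdot [\nu_g]_M$, where $w_0^M \in W_M$ is the longest element. Similarly $\kappa_{M'}(g') = w_0 \cdot \kappa_M(g)$ under the $\Gamma$-equivariant identification $\pi_1(M) \simeq \pi_1(M')$ induced by $w_0$ (well-defined since $w_0 \Phi_M^\vee = \Phi_{M'}^\vee$). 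Since $w_0 w_0^M$ bijects $\Phi_{M,0}^+$ onto $\Phi_{M',0}^+$, the partial order on $X_*(A)_\Q$ is preserved. Combined with the fact that $\tilde w_0 \mu = w_0 w_0^M \mu$ lies in the $W_{M'}$-orbit of $w_0 \mu$, yielding $(\tilde w_0 \mu)^\sharp = (w_0 \mu)^\sharp$ in $\pi_1(M')_\Gamma$ and $(\tilde w_0 \mu)^\diamond = w_0 w_0^M \mu^\diamond$, all four of the claimed bijections on Kottwitz sets follow at once. Basicness is preserved because $\mathrm{Int}(w_0)$ carries the center of $M$ onto the center of $M'$.

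Finally, I would transport fully Hodge-Newton-decomposability across the bijection. If $[b]_M \in B(M, \mu^\sharp + \epsilon, \mu^\diamond)$ is non-basic and HN-decomposable, witnessed by a strict standard Levi $L \subsetneq M$ containing $M_b$ with $[b_L]_L \in A(L, \mu)$, then $w_0 w_0^M$ bijects $\Delta_M$ onto $\Delta_{M'}$, so $\Delta_{L'} := w_0 w_0^M \Delta_L$ defines a strict standard Levi $L' \subsetneq M'$; this $L'$ contains $M_{b'}$, and $[b'_{L'}]_{L'} \in A(L', \tilde w_0 \mu)$ by the transformation formulas of the previous paragraph. The reverse direction is symmetric, giving the asserted equivalence.

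The main routine obstacle throughout is the combinatorial bookkeeping of how the longest elements $w_0$ and $w_0^M$ interact with positive roots, dominance, and standard Levi subgroups; everything rests on the identity $w_0 w_0^M \Phi_M^+ = -w_0 \Phi_M^+ = \Phi_{M'}^+$, obtained by combining the opposition involutions of $M$ and $G$. Once this is in place, the proof is a direct transport of structure via $\sigma$-conjugation by $\dot w_0$.
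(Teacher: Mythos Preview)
Your proposal is correct and follows the same approach as the paper: the paper's proof simply notes that one may take $\tilde{w}_0$ to be the minimal length representative in $W_{w_0Mw_0^{-1}}w_0$ (which is precisely your $w_0 w_0^M$), records the key Newton-point formula $[\nu_{\dot w_0 g \sigma(\dot w_0)^{-1}}]_{M'} = \tilde{w}_0 [\nu_g]_M$, and then asserts that the remaining assertions are easily checked. Your write-up is a fully spelled-out version of exactly that verification, including the Kottwitz-point computation and the transport of HN-decomposability via the bijection $\Delta_M \to \Delta_{M'}$ induced by $\tilde{w}_0$.
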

\begin{proof}We may assume that $\tilde{w}_0$ is the minimal length representative in $W_{w_0Mw_0^{-1}}w_0$. Note that for any $g\in M(\breve F)$,
\[ [\nu_{\dot w_0g\sigma(\dot w_0)^{-1}}]= \tilde{w}_0[\nu_g]\in\N(w_0Mw_0^{-1}).\] The other assertions can be checked easily.\end{proof}

We also need the following proposition which is a key ingredient of the proof of the main result.

\begin{proposition}\label{Prop_not basic}Suppose $G$ quasi-split. Suppose $[b]\in A(G, \mu)$ and $(G,\mu, b)$ is HN-indecomposable. If $b$ is not basic, then $\Fc(G, \mu, b)^a\neq \Fc(G, \mu, b)^{wa}$.
\end{proposition}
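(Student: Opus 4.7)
The strategy, as outlined in the introduction, is to exhibit a point
$y\in\Fc(G,\mu,b)^{wa}(C)\setminus\Fc(G,\mu,b)^{a}(C)$
as the image $y=\gamma(x_0)$ of a non-weakly-admissible point $x_0\in \Fc(G,\mu)(C)$ under some element $\gamma\in\tilde J_b(K)$. Lemma \ref{lemma_Jbaction} produces, for any such $\gamma$, a $G$-bundle isomorphism $\tilde\gamma:\E_{b,x_0}\xrightarrow{\sim}\E_{b,\gamma(x_0)}$; in particular the $\tilde J_b(K)$-action on $\Fc(G,\mu)(C)$ preserves each Newton stratum, and a fortiori $\Fc(G,\mu,b)^{a}(C)$. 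Thus, once such $y$ is produced, it is automatically non-admissible: otherwise $x_0=\gamma^{-1}(y)$ would be admissible, hence weakly admissible, contradicting the choice of $x_0$. All the content therefore lies in constructing $x_0$ and $\gamma$.

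For $x_0$ I would work with the canonical reduction $b_{M_b}$ of $b$ to $M_b=\mathrm{Cent}_H([\nu_b])$, which is a proper standard Levi of $G$ because $b$ is non-basic. Let $P_b$ be the corresponding standard parabolic. Choosing $x_0$ in the Schubert cell $P_b(C)wP_{\mu}(C)/P_{\mu}(C)$ for a suitable $w\in W$, Lemma \ref{lemma:reduction to P modification} identifies the induced $P_b$-reduction $(\E_{b,x_0})_{P_b}$ with the modification $\E_{b_{M_b},\mathrm{pr}_w(x_0)}$, whose degree pairings against characters $\chi\in X^*(P_b/Z_G)^+$ are controlled by Proposition \ref{prop_invariant_bundle} and Corollary \ref{Coro_HN equality among reduction to M}. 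The HN-indecomposability hypothesis, recast by Corollary \ref{coro_HN-indecomp} as the condition that $\mu^{\diamond}-[\nu_b]$ has strictly positive coefficient on every simple coroot $\alpha^{\vee}$ with $\langle\alpha,[\nu_b]\rangle>0$, allows the choice of $w$ and $x_0$ so that $\deg\chi_*(\E_{b,x_0})_{P_b}>0$ for some such $\chi$, which by Proposition \ref{proposition_wa} shows that $x_0$ is not weakly admissible.

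The heart of the argument is producing $\gamma\in\tilde J_b(K)$ such that $\gamma(x_0)$ is weakly admissible. Non-basicness of $b$ supplies some root $\alpha$ with $\lambda:=\langle\alpha,[\nu_b]\rangle>0$, and the structure recalled in \S\ref{subsection_Jb} then shows that the graded piece $\tilde J_b^{\geq\lambda}(K)/\tilde J_b^{>\lambda}(K)\simeq(\mathrm{gr}^\lambda\mathfrak{g}_{\bar B})^{\mathrm{Ad}(\beta)\varphi=\mathrm{Id}}$ is non-trivial. Any such $\gamma$ has image under $\alpha_{b,G}$ that is a genuine unipotent deformation of the canonical $M_b$-reduction of $\E_b$, so that $\tilde\gamma((\E_{b,x_0})_{P_b})$ is a reduction of $\E_{b,\gamma(x_0)}$ to $P_b$ which is no longer of the form $\E_{b'_{M_b}}\times^{M_b}P_b$ for any reduction $b'_{M_b}$ of $b$ to $M_b$; hence the witness of non-weak-admissibility for $x_0$ does not transfer to $\gamma(x_0)$.

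The main obstacle is to ensure that $\gamma(x_0)$ admits \emph{no} such witness at all: for every standard parabolic $Q$ with Levi $M_Q$, every reduction $b_{M_Q}$ of $b$ to $M_Q$, and every $\chi\in X^*(Q/Z_G)^+$, one must check that $\deg\chi_*(\E_{b,\gamma(x_0)})_Q\leq 0$. This requires translating the $\tilde J_b$-translation, via the filtration $\{\tilde J_b^{\geq\lambda}\}$, into an explicit change of reductions of the $G$-bundle $\E_b$ (as opposed to reductions of $b$), and using the HN-indecomposability hypothesis crucially to rule out that the $\gamma$-twist creates a new positive-degree reduction with respect to some other parabolic $Q$. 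I expect that one must take $\gamma$ in $\tilde J_b^{\geq\lambda}(K)$ for a minimal positive $\lambda$ occurring in the adjoint action of $[\nu_b]$ and analyze the resulting reductions of $\E_{b,\gamma(x_0)}$ parabolic by parabolic. Once $\gamma(x_0)\in\Fc(G,\mu,b)^{wa}(C)$ has been established, the first paragraph yields $\gamma(x_0)\notin\Fc(G,\mu,b)^a(C)$ and the proposition follows.
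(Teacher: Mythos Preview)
Your overall strategy is the paper's: produce a non-weakly-admissible $x_0$, act by some $\gamma\in\tilde J_b(K)$, and show $\gamma(x_0)$ is weakly admissible but not admissible. However, the two crucial choices you leave underspecified are made in the paper in ways opposite to what you guess, and for reasons that carry the entire argument.

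First, $x_0$ is not an arbitrary non-weakly-admissible point lying in a Schubert cell for $P_b$. The paper constructs $x_0=x_G$ via Lemma~\ref{Lemma_minimal_Mb-modification} so that $\E_{b,x_0}\simeq\E_{b'}$ for a very specific $[b']$: one works not with $M_b$ but with a \emph{maximal} Levi $M_{\tilde\alpha}$ containing $M_b$, and arranges that $\kappa_{M_{\tilde\alpha}}(b'_{M_{\tilde\alpha}})=-(\tilde\beta^\vee)^\sharp+\e$ for a single simple coroot $\tilde\beta^\vee$. The point is that then $\nu_{\E_{b,x_0}}=\nu_{\E_{b'}}$ is a \emph{minimal} nonzero element of $\mathcal N(G)$. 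This minimality, via Theorem~\ref{theoSchi}, forces any putative destabilizing reduction of $\E_{b,\gamma(x_0)}\simeq\E_{b,x_0}$ to have vector $v=\nu_{\E_{b'}}$ and hence to coincide with the Harder--Narasimhan canonical reduction to $P_{\tilde\alpha}$; there is no ``parabolic by parabolic'' analysis. Producing such an $x_0$ is the content of Lemma~\ref{Lemma_minimal_Mb-modification}, which requires a delicate combinatorial argument (Lemma~\ref{lemma_average coroot}) and is where HN-indecomposability is actually used.

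Second, $\gamma$ is taken in $\tilde J_b^{\geq\lambda_{\max}}(K)\setminus\{1\}$ with $\lambda_{\max}=\max_{\gamma\in\Phi}\langle\nu_b,\gamma\rangle$, i.e.\ the \emph{maximal} positive slope, not the minimal one. The reason is that a destabilizing reduction of $\E_{b,\gamma(x_0)}$, transported back by $\tilde\gamma^{-1}$, would give a second reduction of $\E_b$ to $P_{\tilde\alpha}$; Lemma~\ref{lemma_equivalent reduction} shows it is equivalent to the canonical one, hence the associated sub-$\bar B$-module $\mathfrak q\subset\mathfrak g_{\bar B}$ is $\mathrm{Ad}(\gamma)$-stable. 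But $\mathfrak q\cap\mathfrak g_{\bar B}^{\geq\lambda_{\max}}=0$ (the roots in $P_{\tilde\alpha}$ do not realize the maximal pairing since $\langle\tilde\alpha,[\nu_b]\rangle>0$), while for $\gamma\in\tilde J_b^{\geq\lambda_{\max}}$ and any $y\in\mathfrak q$ one has $\mathrm{Ad}(\gamma)(y)-y\in\mathfrak g_{\bar B}^{\geq\lambda_{\max}}$; choosing $y$ with $\mathrm{Ad}(\gamma)(y)\neq y$ gives the contradiction. With $\lambda$ minimal this step fails outright.
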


Before proving this proposition, we first show how to use it combined with Proposition \ref{Prop_preimage_admissible} to prove the main theorem.

\begin{proof}[Proof of theorem \ref{theo_main}]For the sufficiency, suppose $B(M, \mu^{\sharp}+\e, \mu^{\diamond})$ is fully HN-decomposable and $[b_M]$ is basic in $B(M)$. Then $B(\tilde{M}, (\tilde{w}_0\mu)^\sharp+w_0\e, (\tilde{w}_0\mu)^{\diamond})$ is fully HN-decomposable and $[\tilde{b}_M]$ is basic in $B(\tilde{M})$ with $\tilde{M}=w_0Mw_0^{-1}$ and $\tilde{b}_M=w_0b_M\sigma(w_0)^{-1}$ by Lemma \ref{lemma_levi_oppo}. After applying \cite{CFS} Theorem 6.1 and its proof for the non quasi-split case to the triple $(\tilde{M},\tilde w_0\mu, \tilde{b}_M)$, we get that $\Fc(\tilde{M},\tilde w_0\mu, \tilde{b}_M)^a=\Fc(\tilde{M},\tilde w_0\mu, \tilde{b}_M)^{wa}$. The result follows by Proposition \ref{Prop_preimage_admissible}.

For the necessity, if $b_M$ is not basic in $M$ or $B(M, \mu^{\sharp}+\e, \mu^{\diamond})$ is not fully HN-indecomposable, it's equivalent to say that $\tilde{b}_M$ is not basic in $\tilde{M}$ or $B(\tilde{M}, (\tilde{w}_0\mu)^\sharp+w_0\e, (\tilde{w}_0\mu)^{\diamond})$ is not fully HN-indecomposable, then after applying Proposition \ref{Prop_not basic} (in the first case) or \cite{CFS} Theorem 6.1 and its proof for non quasi-split case to the triple $(\tilde{M},\tilde w_0\mu, \tilde{b}_M)$, we get $\Fc(M,\tilde w_0\mu, \tilde{b}_M)^a\neq\Fc(M,\tilde w_0\mu, \tilde{b}_M)^{wa}$. And $\Fc(G,\mu, b)^{wa}=\Fc(G,\mu, b)^{a}$ follows again by Proposition\ \ref{Prop_preimage_admissible}.
 \end{proof}

 \begin{proof}[Proof of theorem \ref{theo_main_general}] The reduction to the quasi-split case is similar to the proof of \cite{CFS} Theorem 6.1. We may assume that $G$ is adjoint, then $G=J_{b^*}$ is an extended pure inner form of $H$, where $[b^*]\in B(H)_{basic}$. We have \[\begin{split}\Fc(G, \mu, b)^a&=\Fc(H, \mu, b^H)^a,\\ \Fc(G, \mu, b)^{wa}&=\Fc(H, \mu, b^H)^{wa},\end{split} \]and $[b^H]\in B(H, \mu^\sharp+\kappa(b^*), \mu^{\diamond})$. By Lemma \ref{lemma_BGmu reduction Levi},
\[\kappa(b^*)\in \mathrm{Im}(\pi_1(M^H)_{\Gamma, tor}\rightarrow \pi_1(H)_{\Gamma, tor})\]
 and if we view $\kappa(b^*)$ as an element in $\pi_1(M^H)_{\Gamma, tor}$, we have
\[[b_M^H]\in B(M^H, \mu^{\sharp}+\kappa(b^*), \mu^{\diamond}).\] Let $[b^*_M]_{M^H}\in B(M^H)_{basic}$ with $\kappa_{M^H}(b^*_M)=\kappa(b^*)\in \pi_1(M^H)_{\Gamma}$. Then $[b^*]=[b^*_M]\in B(H)$ and we may assume $b^*=b^*_M\in M^H(\breve F)$. It follows that $M=J_{b^*_M}$ is a pure inner form of $M^H$ which is also a Levi subgroup of $G$. Therefore (1) follows.

For (2), by Theorem \ref{theo_main}, $\Fc(H, \mu, b^H)^a=\Fc(H, \mu, b^H)^{wa}$ if and only if $[b^H_M]$ is basic in $H$ and  $B(M^H, \mu^{\sharp}+\kappa(b^*), \mu^{\diamond})$ is fully HN-indecomposable. The later condition is equivalent to $(M, \mu)$ fully HN-indecomposability.
\end{proof}

The rest of the section is devoted to prove Proposition \ref{Prop_not basic}. Suppose $G$ is quasi-split. In order to distinguish the roots for different groups, we will write $\Delta_G$ and $\Delta_{G, 0}$ for $\Delta$ and $\Delta_0$ respectively. For any $\beta\in \Delta_G$, let $M_{\beta}$ be the stardard Levi subgroup of $G$ such that $\Delta_{M_{\beta}}=\Delta_G\backslash \Gamma\beta$. For any $\alpha\in \Delta_{G, 0}$, let $M_{\alpha}:=M_{\beta}$ for any $\beta\in \Delta_G$ such that $\beta|_A=\alpha$.


\begin{lemma}\label{Lemma_minimal_Mb-modification}Suppose $G$ is quasi-split. Let $[b]\in B(G, \mu^{\sharp}+\e, \mu^{\diamond})$ be a non basic element, where $\e\in \pi_1(G)_{\Gamma, tor}$. Suppose $(G, \mu, b)$ is HN-indecomposable.  Then there exist  $\tilde{\alpha}\in\Delta_0$, $w\in W$ and $x\in \Fc(M_{\tilde{\alpha}}, w\mu)(C)$ satisfying the following conditions:
\begin{enumerate}
\item $\langle\tilde \alpha, [\nu_b]\rangle >0$,
\item $w\mu$ is $M_{\tilde{\alpha}}$-dominant,
\item $\e\in \mathrm{Im}(\pi_1(M_{\tilde{\alpha}})_{\Gamma, tor}\hookrightarrow \pi_1(G)_{\Gamma, tor}$, (hence we may view $\e\in \pi_1(M_{\tilde{\alpha}})_{\Gamma, tor}$)
\item $\E_{b_{M_{\tilde\alpha}}, x}\simeq \E_{b'_{M_{\tilde\alpha}}}$, where $b_{M_{\tilde\alpha}}$ is the reduction of $b$ to $M_{\tilde\alpha}$ deduced from its canonical reduction $b_{M_b}$ to $M_b$ combined with the inclusion $M_b\subseteq M_{\tilde\alpha}$, and $b'_{M_{\tilde\alpha}}\in B(M_{\tilde{\alpha}})_{basic}$ such that $\kappa_{M_{\tilde{\alpha}}}(b'_{M_{\tilde\alpha}})=-(\tilde\beta^\vee)^\sharp+\e$ with $\tilde\beta\in \Delta$ and $\tilde\beta_{|A}=\tilde{\alpha}$.
\end{enumerate}
\end{lemma}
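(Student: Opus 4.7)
The strategy is to perform the modification at the Levi $M_b$, where $b_{M_b}$ is basic so that Proposition \ref{prop:description modification basique} applies directly, and then transport the result to $M_{\tilde\alpha}$ via the inclusion $M_b\subseteq M_{\tilde\alpha}$. The key geometric observation is that if $w\mu$ is $M_{\tilde\alpha}$-dominant, then the natural embedding $\Fc(M_b, w\mu)\hookrightarrow \Fc(M_{\tilde\alpha}, w\mu)$ identifies, for $y\in \Fc(M_b,w\mu)(C)$ with image $x\in \Fc(M_{\tilde\alpha},w\mu)(C)$, the modification $\E_{b_{M_{\tilde\alpha}}, x}$ with the inflation $\E_{b_{M_b}, y}\times^{M_b} M_{\tilde\alpha}$.

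First I would fix $\tilde\alpha$ and $\tilde\beta$. Since $(G,\mu,b)$ is HN-indecomposable and $b$ is non-basic, Corollary \ref{coro_HN-indecomp}(3) guarantees a simple root $\tilde\alpha\in\Delta_0$ with $\langle\tilde\alpha,[\nu_b]\rangle>0$ whose coroot appears with strictly positive coefficient in the expansion $\mu^\diamond-[\nu_b]$. Fix such $\tilde\alpha$ (giving (1)) and pick $\tilde\beta\in\Delta$ restricting to $\tilde\alpha$. Condition (3) follows formally: by Lemma \ref{lemma_BGmu reduction Levi}(3) applied to $M_b\subseteq M_{\tilde\alpha}$, $\epsilon$ lifts canonically to $\pi_1(M_b)_{\Gamma, tor}$, and the injection in Lemma \ref{lemma_BGmu reduction Levi}(1) shows this factors through $\pi_1(M_{\tilde\alpha})_{\Gamma, tor}$.

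Next, using Lemma \ref{lemma_BGmu reduction Levi}(3), I may replace $\mu$ by a Weyl conjugate so that $\kappa_{M_b}(b_{M_b})=\mu^\sharp+\epsilon$ in $\pi_1(M_b)_\Gamma$. Then I would select $w\in W$ so that $w\mu$ is $M_{\tilde\alpha}$-dominant (giving (2)) and satisfies $(w\mu)^\sharp=\mu^\sharp+\tilde\beta^{\vee,\sharp}$ in $\pi_1(M_b)_\Gamma$; this relies on the minusculity of $\mu$ and on the strict positivity of the $\tilde\alpha^\vee$-coefficient in $\mu^\diamond-[\nu_b]$. Let $[b'_{M_b}]\in B(M_b)_{basic}$ be the unique basic class with $\kappa_{M_b}(b'_{M_b})=-\tilde\beta^{\vee,\sharp}+\epsilon$. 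Its image in $B(M_{\tilde\alpha})$ is the basic class $[b'_{M_{\tilde\alpha}}]$ with $\kappa_{M_{\tilde\alpha}}(b'_{M_{\tilde\alpha}})=-\tilde\beta^{\vee,\sharp}+\epsilon$. The choices above imply $\kappa_{M_b}(b_{M_b})-(w\mu)^\sharp=\kappa_{M_b}(b'_{M_b})$; the corresponding Newton inequality follows from the centrality of $[\nu_{b'_{M_{\tilde\alpha}}}]$ in $M_{\tilde\alpha}$ combined with the $M_{\tilde\alpha}$-dominance of $w\mu$. Hence $[b'_{M_b}]\in B(M_b, \kappa_{M_b}(b_{M_b})-(w\mu)^\sharp, \nu_{b_{M_b}}((w\mu)^{-1})^\diamond)$, and Proposition \ref{prop:description modification basique} yields $y\in\Fc(M_b,w\mu)(C)$ with $\E_{b_{M_b},y}\simeq \E_{b'_{M_b}}$. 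Letting $x$ denote the image of $y$ in $\Fc(M_{\tilde\alpha},w\mu)(C)$, the identity $\E_{b_{M_{\tilde\alpha}}, x}\simeq \E_{b_{M_b},y}\times^{M_b}M_{\tilde\alpha}\simeq \E_{b'_{M_{\tilde\alpha}}}$ gives (4).

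The main obstacle will be the combinatorial step of producing $w$ with all three constraints simultaneously: $M_{\tilde\alpha}$-dominance of $w\mu$, the precise image $\mu^\sharp+\tilde\beta^{\vee,\sharp}$ in $\pi_1(M_b)_\Gamma$, and the Newton bound needed for Proposition \ref{prop:description modification basique}. This depends on the minuscule hypothesis (which constrains the $W$-orbit of $\mu$) together with the positivity of the $\tilde\alpha^\vee$-coefficient in $\mu^\diamond-[\nu_b]$ (which provides the necessary room to adjust by $\tilde\beta^\vee$ modulo $\langle\Phi^\vee_{M_b}\rangle$). A parallel subtle point is to verify that the induced element $b'_{M_{\tilde\alpha}}$ is genuinely basic in $M_{\tilde\alpha}$---equivalently, that the rational Newton vector $[\nu_{b'_{M_b}}]$, determined by $-\tilde\beta^{\vee,\sharp}$ modulo torsion, is central in $M_{\tilde\alpha}$ rather than merely in $M_b$.
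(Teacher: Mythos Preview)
Your overall framework---produce $x$ by invoking Proposition~\ref{prop:description modification basique}, then verify the Newton/Kottwitz constraints---matches the paper. However, the execution diverges from the paper in a way that introduces a real gap.

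The paper does \emph{not} work in $M_b$ and inflate. Instead, it reverses the modification: finding $x\in\Fc(M_{\tilde\alpha},w\mu)(C)$ with $\E_{b_{M_{\tilde\alpha}},x}\simeq\E_{b'_{M_{\tilde\alpha}}}$ is equivalent to finding $y\in\Fc(M_{\tilde\alpha},(w\mu)^{-1})(C)$ with $\E_{b'_{M_{\tilde\alpha}},y}\simeq\E_{b_{M_{\tilde\alpha}}}$. Now $b'_{M_{\tilde\alpha}}$ is basic in $M_{\tilde\alpha}$ \emph{by definition}, so Proposition~\ref{prop:description modification basique} applies directly in $M_{\tilde\alpha}$, and the question becomes whether $[b_{M_{\tilde\alpha}}]$ lies in the appropriate generalized Kottwitz set there. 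Your inflation route instead requires that the $M_b$-basic element $b'_{M_b}$ push forward to a basic element of $B(M_{\tilde\alpha})$. This is false in general: the Newton vector $\mathrm{Av}_\Gamma\mathrm{Av}_{W_{M_b}}(-\tilde\beta^\vee)$ is central in $M_b$ but typically not in the larger $M_{\tilde\alpha}$, so the image lands in a non-basic class with the wrong Kottwitz invariant. You flag this as a ``subtle point'', but it is not a detail to be checked---it is exactly why the paper does not take this route.

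The second gap is the one you call the ``main obstacle'': you fix $\tilde\alpha$ at the outset (any simple root with $\langle\tilde\alpha,[\nu_b]\rangle>0$ and positive coefficient in $\mu^\diamond-[\nu_b]$) and then hope to find $w$. The paper shows this order of choice does not work. It first writes $[b_{M_b}]\in B(M_b,(w_1\mu)^\sharp+\e,(w_1\mu)^\diamond)$, defines for each relevant $\alpha$ a set $R_\alpha$ of $M_\alpha$-dominant elements of the form $(s_\beta(w_1\mu)_{M_\alpha\text{-}\mathrm{dom}})_{M_\alpha\text{-}\mathrm{dom}}$, and takes $w\mu$ \emph{maximal} in $\bigcup_\alpha R_\alpha$. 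Only after this does it select $\tilde\alpha$, and even then the choice depends on a case analysis governed by the auxiliary Lemma~\ref{lemma_average coroot}, which bounds the coefficients $n_{\beta,\gamma}$ appearing in $\mathrm{Av}_{W_{M_\beta}}\beta^\vee$. The maximality of $w\mu$ forces all coefficients $n_j$ in $w\mu-w_1\mu=\sum n_j\beta_j^\vee$ (in $\pi_1(M_b)_\Gamma$) to be at least $1$, and the bounds from Lemma~\ref{lemma_average coroot} then make the Newton inequality $\kappa_{M_b}(b_{M_b})\preceq_{M_b}-\mathrm{Av}_\Gamma\mathrm{Av}_{W_{M_{\tilde\alpha}}}(\tilde\beta^\vee)+\mathrm{Av}_\Gamma(w\mu)$ go through. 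None of this combinatorics is visible in your sketch, and without it the required $w$ need not exist for an arbitrarily pre-chosen $\tilde\alpha$.
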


\begin{proof} By Lemma \ref{lemma_BGmu reduction Levi} $[b_{M_b}]\in B(M_b, (w_1\mu)^{\sharp}+\e, (w_1\mu)^{\diamond})$ for some $w_1\in W$. Moreover, $\e\in \mathrm{Im}(\pi_1(M_b)_{\Gamma, tor}\hookrightarrow \pi_1(G)_{\Gamma, tor}$. Hence (3) holds for any $M_{\tilde\alpha}$ containing $M_b$.  For any $\alpha\in \Delta_{G, 0}$ such that $\langle\alpha, [\nu_b]\rangle >0$, let $[b_{M_\alpha}]$ be the image of $[b_{M_b}]$ via the natural map $B(M_b)\rightarrow B(M_\alpha)$. Then $[b_{M_{\alpha}}]\in A(M_\alpha, (w_1\mu)_{M_\alpha-dom})$.  As $(b, \mu)$ is not HN-decomposable, we have $(w_1\mu)_{M_{\alpha}-dom}\neq \mu$. Therefore there exists $\beta\in \Delta_G$ such that $\beta|_A=\alpha$ and $\langle\beta, (w_1\mu)_{M_\alpha-dom}\rangle<0$.  Let
\[R_{\alpha}:=\{(s_{\beta}(w_1\mu)_{M_{\alpha}-dom})_{M_{\alpha}-dom}\in W\mu| \beta\in \Delta_G, \beta|_A=\alpha, \langle\beta, (w_1\mu)_{M_\alpha-dom}\rangle<0\},\] where $s_\beta\in W$ is the reflection corresponding to $\beta$.
Let $w\mu$ be a maximal element in the subset \[\bigcup_{\alpha\in \Delta_{G, 0}, \langle\alpha, [\nu_b]\rangle > 0} R_\alpha\subset W\mu.\] Suppose $w\mu\in R_{\alpha}$ for any $\alpha\in \Delta_{G, 0}$, then $(\alpha, w\mu)$ satisfies (1) and (2). It remains to find $x\in \mathcal{F}(M_{\tilde\alpha}, w\mu)(C)$ satisfying condition (4) for some $\tilde\alpha$ with $w\mu\in R_{\tilde\alpha}$.

It's equivalent to find $\tilde\alpha\in\Delta_{G,0}$, $y\in \Fc(M_{\tilde\alpha}, (w\mu)^{-1})(C)$ such that $\E_{b_{M_{\tilde\alpha}}}\simeq \E_{b'_{M_{\tilde\alpha}}, y}$ and $w\mu\in R_{\tilde\alpha}$.

By Proposition \ref{prop:description modification basique}, we need to find $\tilde\alpha\in \Delta_{G,0}$ such that $w\mu\in R_{\tilde\alpha}$ and
\[ [b_{M_{\tilde\alpha}}]\in B(M_{\tilde\alpha}, \kappa_{M_{\tilde\alpha}} (b'_{M_{\tilde\alpha}})+(w\mu)^\sharp, \nu_{b'_{M_{\tilde\alpha}}}(w\mu)^{\diamond}).\] By Lemma \ref{lemma_BGmu}, the later is equivalent to the conditions
\begin{eqnarray}\label{eqn_kappa}\kappa_{M_{\tilde\alpha}}(b_{M_{\tilde\alpha}})&=&(w\mu-\tilde\beta^{\vee})^{\sharp}+\e \text{ in } \pi_1(M_{\tilde\alpha})_{\Gamma}, \\
\label{eqn_nu} \kappa_{M_b}(b_{M_b})&\preceq_{M_b}& -\mathrm{Av}_{\Gamma}\mathrm{Av}_{W_{M_{\tilde\alpha}}}(\tilde\beta^{\vee})+\mathrm{Av}_{\Gamma}(w\mu) \text{ in } \pi_1(M_b)_{\Gamma, \Q}.\end{eqnarray}

Let $(\beta_j)_{j\in J}$ be a set of representatives of Galois orbits in $\Delta_G\backslash \Delta_{M_b}$. Let
\[w\mu-w_1\mu=\sum_{j\in J}n_j\beta^{\vee}_j \text{ in }\pi_1(M_b)_{\Gamma}\] with $n_j\in\mathbb{N}$ for all $j\in J$.
\

\noindent {\it Claim 1: $n_j\geq 1, \forall j\in J$.}

Suppose $n_{j_0}=0$ for some $j_0\in J$. Let $\alpha_0=\beta_{j_0}|_A$. Then
$(w\mu)_{M_{\alpha_0}-dom}=(w_1\mu)_{M_{\alpha_0}-dom}$. Again by HN-indecomposability, $(w\mu)_{M_{\alpha_0}-dom}\neq \mu$.
Then there exists $\beta\in \Gamma\beta_{j_0}$ such that $\langle\beta, (w\mu)_{M_{\alpha_0}-dom} \rangle<0$. It follows that
\[(s_{\beta}(w_1\mu)_{M_{\alpha_0}-dom})_{M_{\alpha_0}-dom}\succneqq w\mu.\]This contradicts with the maximality of $w\mu$. Hence Claim 1 follows.

By definition, suppose $w\mu\in R_{\alpha}$ for some $\alpha$ and $w\mu=(s_{\beta}(w_1\mu)_{M_{\alpha}-dom})_{M_{\alpha}-dom}$, where $\alpha\in \Delta_{G,0}$ with $\langle\alpha, [\nu_b]\rangle>0$ and $\beta\in \Delta_G$ with $\beta|_A=\alpha$. Suppose $\beta\in \Gamma\beta_{j_0}$ for some $j_0\in J$, then $n_{j_0}=1$ by the definition of $w\mu$. If the subset
\[J_1:=\{j\in J| n_j=1\}\] of $J$ consists of the single element of $j_0$. Let $\tilde{\alpha}:=\alpha$, $\tilde{\beta}:=\beta$. We will verify that $\tilde\alpha$ satisfies the desired properties. Since $[b_{M_b}]\in B(M_b, (w_1\mu)^{\sharp}+\e, (w_1\mu)^{\diamond})$, we have
\[\kappa_{M_{\tilde{\alpha}}}(b_{M_{\tilde\alpha}})= (w_1\mu)^{\sharp}+\e= (w\mu-\tilde{\beta}^{\vee})^{\sharp}+\e \text{ in } \pi_1(M_{\tilde\alpha})_{\Gamma}.\] The equality (\ref{eqn_kappa}) follows. For the inequality (\ref{eqn_nu}), we have
\begin{eqnarray*}
& &\mathrm{Av}_{\Gamma}(w\mu)-\kappa_{M_b}(b_{M_b}) -\mathrm{Av}_{\Gamma}\mathrm{Av}_{W_{M_{\tilde\alpha}}}(\tilde\beta^{\vee})\\
&=& (w\mu-w_1\mu)-\mathrm{Av}_{W_{M_{\tilde\alpha}}}(\tilde\beta^{\vee})\\
&\succcurlyeq_{M_b} & 0
\end{eqnarray*}
in $\pi_1(M_b)_{\Gamma,\Q}$, where the last inequality follows from Lemma \ref{lemma_average coroot} (1) (because $E_8$ case does not occur as there is only trivial minuscule cocharacters in that case) combined with the fact that $n_j\geq 2$ for any $j\in J\backslash\{j_0\}$.

It remains to deal with the case when $J_1$ has at least two elements. By Claim 1, for any $j\in J$, up to replacing $\beta_j$ by some other representative in the same Galois orbit, we may assume $\beta_j$ appears in the linear combination of $w\mu-w_1\mu$.
\

\noindent {\it Claim 2: $w\mu=\mu=(s_{\beta_{j'}}(w_1\mu)_{M_{\beta_{j'}}-dom})_{M_{\beta_{j'}}-dom}$ for any $j'\in J_1$.}

We want to show $w\mu$ is $G$-dominant. Suppose $j'_0\in J_1$ with $j_0\neq j'_0$. Let $w\mu=w_3s_{\beta_{j_0'}}w_2w_1\mu$ with $w_2, w_3\in W_{M_{\beta_{j'_0}}}$. Then we have
\[(s_{\beta_{j_0'}}(w_1\mu)_{M_{\beta_{j'_0}}-dom})_{M_{\beta_{j'_0}}-dom}\succcurlyeq s_{\beta_{j'_0}}w_2 w_1\mu.\] Since both sides are in $W\mu$ with difference a linear combination of coroots in $M_{\beta_{j'_0}}$, we have
\[\bigcup_{\alpha}R_{\alpha}\ni (s_{\beta_{j_0'}}(w_1\mu)_{M_{\beta_{j'_0}}-dom})_{M_{\beta_{j'_0}}-dom}=(s_{\beta_{j'_0}}w_2 w_1\mu)_{M_{\beta_{j'_0}}-dom}\succcurlyeq w\mu.\] By the maximality of $w\mu$, we deduce that $w\mu$ is both $M_{\beta_{j_0}}$-dominant and $M_{\beta_{j_0'}}$-dominant. Therefore \[w\mu=(s_{\beta_{j'_0}}w_2 w_1\mu)_{M_{\beta_{j'_0}}-dom}\] is $G$-dominant and Claim 2 follows.

Let $(I_i)_{0\leq i\leq r}$ be the increasing sequence of subsets in $\Delta_G$ as in Lemma \ref{lemma_average coroot}. Suppose $i_0$ is the smallest integer such that $\{\beta_{j}| j\in J_1\}\cap I_{i_0}$ is not an empty set. Choose $\tilde\beta\in \{\beta_{j}| j\in J_1\}\cap I_{i_0}$. Let $\tilde\alpha:=\tilde\beta|_A$. By the same arguments as before we can verify that $\tilde\alpha$ satisfies the condition (\ref{eqn_kappa}). For the condition (\ref{eqn_nu}), let
\[\mathrm{Av}_{W_{M_{\tilde\alpha}}}(\tilde\beta^{\vee}):=\sum_{j\in J}m_j \beta_j^{\vee}\text{ in } \pi_1(M_b)_{\Gamma,\mathbb{Q}},\]with $m_j\in\mathbb{Q}$ for all $j\in J$. By Lemma \ref{lemma_average coroot},  $0\leq m_j\leq 2$ (as $E_8$ case will not occur) for all $j\in J$, and $0\leq m_j\leq 1$ for all $j\in J_1$. Then
\begin{eqnarray*}
& &\mathrm{Av}_{\Gamma}(w\mu)-\kappa_{M_b}(b_{M_b})- \mathrm{Av}_{\Gamma}\mathrm{Av}_{W_{M_{\tilde\alpha}}}(\tilde\beta^{\vee})\\
&=& (w\mu-w_1\mu)-\mathrm{Av}_{W_{M_{\tilde\alpha}}}(\tilde\beta^{\vee})\\
&=&\sum_{j\in J}(n_j-m_j)\beta_j^{\vee}\\
&\succcurlyeq_{M_b} &\sum_{j\in J_1}(1-m_j)\beta_j^{\vee}+ \sum_{j\in J\backslash J_1}(2-m_j)\beta_j^{\vee}\\
&\succcurlyeq_{M_b} & 0.
\end{eqnarray*}
\end{proof}

\begin{lemma}\label{lemma_average coroot}Suppose $G$ is quasi-split. Let $\beta\in \Delta_G$. Suppose
\begin{eqnarray}\label{eqn_average}\mathrm{Av}_{M_{\beta}}\beta^{\vee}=\beta^{\vee}+\sum_{\gamma\in (\Delta_{M_{\beta}})_{\Gamma}}n_{\beta, \gamma}\gamma^{\vee} \text{ in }X_*(T)_{\Gamma,\Q}.\end{eqnarray}Then
\begin{enumerate}
\item $0\leq n_{\beta, \gamma}\leq 3$ for all $\gamma\in(\Delta_G)_{\Gamma}$. Moreover, if all the connected components of the Dynkin diagram of $G$ is not of type $E_8$, then
$0\leq n_{\beta, \gamma}\leq 2$ for all $\gamma\in(\Delta_G)_{\Gamma}$.
\item There exist $r\in\mathbb{N}_{\geq 1}$ and an increasing sequence of $\Gamma$-invariant subsets in $\Delta_G$
\[\emptyset=I_0\subset I_1\subset\cdots\subset I_{r-1}\subset I_r=\Delta_G \] such that if $\beta\in I_i$ for some $i$, then $n_{\beta,\gamma}\leq 1$ for all $\gamma\notin (I_{i-1})_{\Gamma}$. In particular, if $r=1$, then $n_{\beta, \gamma}\leq 1$ for all $\beta\in \Delta_{G}$, $\gamma\in (\Delta_G)_{\Gamma}$.
\end{enumerate}
\end{lemma}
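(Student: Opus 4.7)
My plan is to reduce the statement to a case-by-case check on irreducible root systems and verify the bounds by inspecting inverse Cartan matrices. Both the Weyl-group average and the expansion in simple coroots respect the decomposition of the Dynkin diagram of $G$ into $\Gamma$-orbits of connected components, so I would first reduce to the case where $G_{\ov F}$ is simple. After restriction of scalars I may further assume $G$ is split with irreducible Dynkin diagram; the quasi-split case then follows by taking $\Gamma$-coinvariants of the split formula, where the coefficient of each class in $(\Delta_{M_\beta})_\Gamma$ is a sum of split-case coefficients over the corresponding Galois orbit.

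In the split irreducible case, the $W_{M_\beta}$-invariant subspace of $X_*(T)_\Q$ is one-dimensional, spanned by the fundamental coweight $\omega_\beta^\vee$. Hence $\mathrm{Av}_{W_{M_\beta}}\beta^\vee=c_\beta\omega_\beta^\vee$ for some $c_\beta>0$, pinned down by forcing the $\beta^\vee$-coefficient to equal $1$. Writing $\omega_\beta^\vee=\sum_\gamma (C^{-1})_{\beta\gamma}\gamma^\vee$ with $C$ the Cartan matrix of $G$ yields the explicit formula $n_{\beta,\gamma}=(C^{-1})_{\beta\gamma}/(C^{-1})_{\beta\beta}$. Non-negativity is immediate since fundamental coweights lie in the $\Q_{\geq 0}$-span of simple coroots. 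For the upper bound I would inspect the inverse Cartan matrix for each irreducible type: in types $A_n$, $B_n$, $C_n$, $D_n$, $E_6$, $E_7$, $F_4$, $G_2$ the ratio $(C^{-1})_{\beta\gamma}/(C^{-1})_{\beta\beta}$ is at most $2$, while in $E_8$ it can reach $3$ (for instance with $\beta=\alpha_8$ and $\gamma=\alpha_4$, where $(C^{-1})_{84}=6$ and $(C^{-1})_{88}=2$). Returning to the quasi-split case, a direct check using the classification of diagram automorphisms shows that summing these coefficients over a Galois orbit still respects the claimed bounds.

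For part (2), I would build the filtration recursively: set $I_0:=\emptyset$ and
$$I_i:=\{\beta\in\Delta_G \mid n_{\beta,\gamma}\leq 1\ \text{for all}\ \gamma\notin (I_{i-1})_\Gamma\}.$$
Each $I_i$ is automatically $\Gamma$-stable because the Galois action permutes the $n_{\beta,\gamma}$ compatibly with its action on $\beta$, and the inclusions $I_{i-1}\subseteq I_i$ are clear from the definition. The nontrivial content is that $I_r=\Delta_G$ for some $r$, which one verifies type by type: in type $A$ one already has $r=1$ since all $n_{\beta,\gamma}\leq 1$, while for the other types inspection of the inverse Cartan matrix identifies a small \textquotedblleft central\textquotedblright{} set of simple roots that forms $I_1$, with the remaining roots entering within a few more steps.

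The main obstacle is the explicit case-by-case inspection of the inverse Cartan matrices of the exceptional types, especially $E_7$ and $E_8$, where the large entries produce the bound $3$ and determine the depth $r$ of the filtration. Passing from split to quasi-split is routine once the split computation is tabulated, but still requires enumerating the diagram automorphisms of each type and confirming that coefficients summed over Galois orbits do not exceed the bounds; this is a finite but fiddly verification.
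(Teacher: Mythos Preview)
Your overall strategy coincides with the paper's: both reduce to an irreducible Dynkin diagram and determine the $n_{\beta,\gamma}$ from the linear system $\langle\alpha,\mathrm{Av}_{W_{M_\beta}}\beta^\vee\rangle=0$ for $\alpha\in\Delta_{M_\beta}$, which is exactly your inverse-Cartan computation. The paper then simply lists the filtration $I_0\subset\cdots\subset I_r$ type by type, whereas you propose to generate it recursively; both require the same case-by-case inspection.

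There is, however, a genuine gap in your reduction to the split case. Restriction of scalars only handles the situation where $\Gamma$ permutes the connected components of the Dynkin diagram; it does \emph{not} reduce a quasi-split absolutely simple group with nontrivial diagram automorphism (the cases $^2A_n$, $^2D_n$, $^3D_4$, $^2E_6$) to a split one. In those cases $M_\beta$ is the Levi with $\Delta_{M_\beta}=\Delta_G\setminus\Gamma\beta$, not $\Delta_G\setminus\{\beta\}$, so the average is over a different Weyl group than in the split computation. Concretely, in $X_*(T)_{\Gamma,\Q}$ one finds
\[
n_{\beta,[\gamma]}=\frac{\sum_{\gamma'\in\Gamma\gamma}(C^{-1})_{\beta\gamma'}}{\sum_{\beta'\in\Gamma\beta}(C^{-1})_{\beta\beta'}},
\]
which is \emph{not} the sum of split-case coefficients over the orbit unless $\beta$ is $\Gamma$-fixed (e.g.\ for $^2A_3$, $\beta=\alpha_1$, the true value is $n_{\alpha_1,[\alpha_2]}=\tfrac12$, not $\tfrac23$). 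Fortunately your claimed formula is always an overestimate, so the bounds in part~(1) survive; but the explicit values feeding into the recursive construction in part~(2) are wrong, and you must redo the check for the genuinely quasi-split types rather than importing the split answer. The paper handles this by working directly with the linear system in $X_*(T)_{\Gamma,\Q}$ for each type, which is what you should do as well.
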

\begin{proof} This lemma only depends on the absolute root system of $G$ with Galois action. After considering separately each connected component of the Dynkin diagram of $G$, we may assume the Dynkin diagram of $G$ is connected. The first assertion can be checked directly case by case. Indeed, it suffices to compute explicitely all the $n_{\beta, \gamma}$ in (\ref{eqn_average}). As $\langle\mathrm{Av}_{M_{\beta}}\beta^{\vee}, \alpha\rangle=0$ for any $\alpha\in \Delta_{M_{\beta}}$, it follows that
\[\langle\beta^{\vee}, \alpha\rangle+\sum_{\gamma\in (\Delta_{M_{\beta}})_{\Gamma}}n_{\beta, \gamma}\langle\gamma^{\vee}, \alpha\rangle=0\]
for any $\alpha\in \Delta_{M_{\beta}}$. Then $\{n_{\beta, \gamma}\}$ is the unique solution of this system of linear equations.

 For the second assertion, we list the increasing sequence of $\Gamma$-invariant subsets in $\Delta_G$ case by case according to the type of Dynkin diagram of $G$. We can check directly that this increasing sequence of subsets satisfies the desired property. We left the details of the verification to the readers.

\noindent{\it Case $A_n$:} In the $^1A_n$ case or $^2A_n$ case with $n$ even, take $r=1$. Otherwise, we are in the $^2A_n$ case with $n$ odd, then take $r=2$ and $I_1=\Delta_G\backslash\{\beta\}$ where $\beta$ is the unique $\Gamma$-invariant root in $\Delta_G$.

\noindent{\it Case $B_n$:} take $r=2$ and $I_1$ is the subset of long roots in $\Delta_G$.

\noindent{\it Case $C_n$:} take $r=1$.

\noindent{\it Case $D_n$:} In the $^1D_n$ case with $n>4$, take $r=2$ and $\Delta_G\backslash I_1$ consists of two roots which are the end points of the Dynkin diagram and are neighbors to the same simple root.

In the $^2D_n$ case or $^1D_4$ case, take $r=1$.

In the $^3 D_4$ case, take $r=2$ and $I_1=\Delta_G\backslash \{\beta\}$ where $\beta$ is the unique $\Gamma$-invariant root in $\Delta_G$.

\noindent{\it Case $E_n$:} Suppose the Dynkin diagram of $E_8$ is as follows:
\[\begin{tikzpicture}
\dynkin[scale=1.2, text/.style={scale=1.8}]{E}{8};
\dynkinLabelRoot{1}{\beta_1}
\dynkinLabelRoot{2}{\beta_2}
\dynkinLabelRoot{3}{\beta_3}
\dynkinLabelRoot{4}{\beta_4}
\dynkinLabelRoot{5}{\beta_5}
\dynkinLabelRoot{6}{\beta_6}
\dynkinLabelRoot{7}{\beta_7}
\dynkinLabelRoot{8}{\beta_8}.
\end{tikzpicture}\]
In the $E_7$ (resp. $E_6$) case, we remove $\beta_8$ (resp. $\beta_7$ and $\beta_8$) in the diagram.

In the $^1E_6$ case, take $r=3$, $I_1=\{\beta_4\}$, $I_2=\{\beta_2, \beta_3, \beta_4, \beta_5\}$.

In the $^2E_6$ case, take $r=4$, $I_1=\{\beta_3, \beta_5\}$, $I_2=I_1\cup\{\beta_1,\beta_6\}$, $I_3=I_2\cup\{\beta_4\}$.

In the $E_7$ case, take $r=4$, $I_1=\{\beta_4\}$, $I_2=\{\beta_3, \beta_4, \beta_5\}$, $I_3=\Delta_G\backslash\{\beta_7\}$.

In the $E_8$ case, take $r=5$,  $I_1=\{\beta_4\}$, $I_2=\{\beta_4, \beta_5\}$, $I_3:=I_2\cup\{\beta_3, \beta_6\}$, $I_4=\Delta_G\backslash\{\beta_8\}$.

\noindent{\it Case $F_4$:} Suppose the Dynkin diagram is as follows:
\[\begin{tikzpicture}
\dynkin[scale=1.2, text/.style={scale=1.8}]{F}{4};
\dynkinLabelRoot{1}{\beta_1}
\dynkinLabelRoot{2}{\beta_2}
\dynkinLabelRoot{3}{\beta_3}
\dynkinLabelRoot{4}{\beta_4}.
\end{tikzpicture}\]
Take $r=3$, $I_1=\{\beta_2\}$, $I_2=\{\beta_1, \beta_2, \beta_3\}$.

\noindent{\it Case $G_2$:} Take $r=2$, $I_1$ is the set of unique long root in $\Delta_G$.
\end{proof}

Now we can prove Proposition \ref{Prop_not basic}.
\begin{proof}[Proof of Proposition \ref{Prop_not basic}]
As $b$ is not basic, there exist $\alpha\in\Delta_0$, $w\in W$ and $x\in \Fc(M_{\tilde \alpha}, w\mu)(C)$ satisfies the properties (1)-(3) in Lemma \ref{Lemma_minimal_Mb-modification} which follows. Let $b_{M_{\tilde \alpha}}$ and $b'_{M_{\tilde \alpha}}$ as in loc. cit.

Let $x_G\in \Fc(G, \mu)(C)$ be the image of $x$ via the natural morphism:
\[\begin{array}{ccc}\Fc(M_{\tilde\alpha}, w\mu)(C)&\rightarrow& \Fc(G, \mu)(C)\\ a(M_{\tilde\alpha}\cap wP_{\mu}w^{-1}) &\mapsto& aw P_{\mu}\end{array}\]Then $\E_{b, x_G}\simeq \E_{b'_{M_{\tilde\alpha}}}\times^{M_{\tilde\alpha}} G\simeq\E_{b'}$ where $[b']\in B(G)$ is the image of $[b'_{M_{\tilde\alpha}}]\in B(M_{\tilde\alpha})$ via the natural map $B(M_{\tilde\alpha})\rightarrow B(G)$. Hence $x_G\notin \Fc^a(C)$. Moreover, $x_G\notin \Fc^{wa}(C)$. Indeed, the canonical reduction $(\E_{b'})_{P_{\tilde\alpha}}$ of $\E_{b'}$ to the standard parabolic subgroup $P_{\tilde\alpha}$ corresponding to $M_{\tilde\alpha}$ induces the reduction $\E_{b_{M_{\tilde\alpha}}}\times^{M_{\tilde\alpha}}P_{\tilde\alpha}$ of $\E_b$ to $P_{\tilde\alpha}$. Take $\chi\in X^*(P_{\tilde\alpha}/Z_G)^{+}$,  then $\mathrm{deg}\chi_*(\E_{b'})_{P_{\tilde\alpha}}>0$.

For any element $\gamma\in\tilde J_b (K)$, we have $\gamma(x_G)\notin\Fc^a(C)$. Choose $\gamma\in\tilde{J}^{\geq \lambda_{max}}_b(K)\backslash\{1\}$ with  $\lambda_{max}=\max_{\gamma\in \Phi}\langle \nu_b, \gamma\rangle$, it remains to show that $\gamma(x_G)\in \Fc^{wa}(C)$.

Suppose the pair $(b,\gamma(x_G))$ is not weakly admissible. There exist a standard maximal parabolic subgroup $Q$, a reduction $b_{M_Q}$ of $b$ to the Levi component $M_Q$ of $Q$ and $\chi \in X^* (Q/Z_G)^+$ such that
\[\deg \, \chi_* (\E_{b,\gamma(x_G)})_Q >0,\]
 where $(\E_{b,\gamma(x_G)})_Q$ is the reduction of $\E_{b,\gamma(x_G)}$ to $Q$ induced by a reduction $\E^{\gamma}_{b, Q}$ of $\E_b$ to $Q$, where $\E^{\gamma}_{b, Q}:=\E_{\tilde b_{M_Q}}\times^{M_Q} Q$ is induced by a reduction $\tilde b_{M_Q}$ of $b$ to $M_Q$ (and hence to $Q$).
The isomorphism $\gamma: \E_{b}\stackrel{\sim}{\rightarrow}\E_{b}$ induces an isomorphism $\E_{b}/Q\stackrel{\sim}{\rightarrow}\E_{b}/Q$, hence by Remark \ref{remark_section}, $\E_{b, Q}$ induces a reduction $\E_{b, Q}^{\gamma}$ of $\E_b$ to $Q$ and an isomorphism $\E_{b, Q}\stackrel{\sim}{\rightarrow} \E_{b, Q}^{\gamma}$ satisfying the commutative diagram
\[\xymatrix{\E_b\ar[r]^{\gamma}_{\sim}\ar[d]^{\sim} &\E_b\ar[d]^{\sim}\\ \E_{b, Q}\times^Q G\ar[r]^{\sim} &\E^{\gamma}_{b, Q}\times^Q G}  \]

Suppose the reduction $(\E_{b, x_G})_Q$ to $Q$ of $\E_{b, x_G}$ is induced by the reduction $(\E_{b, \gamma(x_G)})_Q$ to $Q$ of $\E_{b, \gamma(x_G)}$ by Lemma \ref{lemma_Jbaction} and Remark \ref{remark_section}. We get a cubic commutative diagram

\[\xymatrix{ &\E_{b, Q}\ar[rr]^{\sim}\ar[dl]\ar@{~~>}[dd] &&\E_{b, Q}^{\gamma}\ar@{~>}[dd]\ar[dl]\\
 \E_b\ar[rr]^(0.6){\sim}_(0.6){\gamma}\ar@{~>}[dd]  &&\E_b\ar@{~>}[dd] & \\
 &(\E_{b, x_G})_Q\ar@{-->}[dl]\ar@{-->}[rr]^(0.4){\sim}  &&(\E_{b, \gamma(x_G)})_Q\ar[dl]  \\
 \E_{b, x_G}\ar[rr]^{\sim } &&\E_{b, \gamma(x_G)} & }  \]
where the vertices of the front face are the $G$-bundles and vertices of the back face are the corresponding reductions of $G$-bundles to $Q$,  the vertical waved arrows denote the modification of $G$ or $Q$-bundles. It follows that $\deg\chi_*(\E_{b, x_G})_Q=\deg\chi_*(\E_{b, \gamma(x_G)})_Q>0$.

According to theorem \ref{theoSchi} the vector
\begin{align*}
v: X^* (Q/Z_G)&\longrightarrow \Z \\
\chi &\longmapsto \deg\, \chi_*  (\E_{b,x_G})_Q
\end{align*}
seen as an element of $X_*(A)_{\Q}$ satisfies $v\leq \nu_{\E_{b,x_G}}=\nu_{\E_{b'}}$ with $v\neq 0$. As $\nu_{\E_{b'}}\in\mathcal{N}_G\backslash\{0\}$ is minimal, one deduces that $v=\nu_{b, x_G}$,  $Q=P_{\tilde\alpha}$ and  $(\E_{b,x_G})_Q$ is the HN canonical reduction of $\E_{b,x_G}$. Therefore $\E_{b,Q}=\E_{b_{M_{\tilde\alpha}}}\times^{M_{\tilde\alpha}}P_{\tilde\alpha}$ to $Q$ of $\E_b$. Pushing forward via the natural projection $P_{\tilde\alpha}\rightarrow M_{\tilde\alpha}$,  the isomorphism of $P_{\tilde\alpha}$-bundles $\E_{b, P_{\tilde\alpha}}\simeq \E_{b, P_{\tilde\alpha}}^{\gamma}$ induces an isomorphism of $M_{\tilde\alpha}$-bundles, hence we have $[b_{M_{\tilde\alpha}}]=[\tilde{b}_{M_Q}]\in B(M_{\tilde\alpha})$. According to the following lemma \ref{lemma_equivalent reduction}, the two reductions $b_{M_{\tilde\alpha}}$ and $\tilde{b}_{M_{\tilde\alpha}}:=\tilde{b}_{M_Q}$  of $b$ to $M_Q$ are equivalent. In particular, the reductions $\E_{b, Q}$ and $\E_{b, Q}^{\gamma}$ of $\E_b$ to $Q$ are equivalent. Hence they give the same sub-vector bundle
\[\E_{b, Q}\times^{Q, \mathrm{Ad}}\mathrm{Lie} Q=\E_{b, Q}^{\gamma}\times^{Q, \mathrm{Ad}}\mathrm{Lie} Q\]
 over $X$ of $\mathrm{Ad}(\E_b)=\E_b\times^{G, \mathrm{Ad}}\mathfrak{g}$. By theorem \ref{theorem_classification_barB-modules}, this sub-vector bundle corresponds to a sub-$\bar B$-module $\mathfrak{q}$ in $\mathfrak{g}_{\bar B}$ which is stable under the action of $\mathrm{Ad}(\gamma)$, where we identify $\tilde{J}_b(K)$ with a subgroup of $G(\bar B)$ (cf. section \ref{subsection_Jb}). Recall that in loc. cit., we have also defined a filtration $(\mathfrak{g}_{\bar B}^{\geq \lambda})_{\lambda\in\Q}$ on $\mathfrak{g}_{\bar B}$, As $\nu_{b_{M_{\tilde\alpha}}}=\nu_b$, the non-zero elements of $\mathfrak{q}$ are in the $-\langle\gamma, \nu_b\rangle$ graded pieces of $\mathfrak{g}_{\bar B}$ for some absolute root $\gamma$ in $Q=P_{\tilde\alpha}$. In particular $\mathfrak{q}\cap\mathfrak{g}^{\geq \lambda_{max}}_{\bar B}=0$. On the other side, since $\gamma\neq 1$, we can always choose an element $y\in\mathfrak{q}\cap\mathfrak{g}_{\bar B}^{\geq 0}$ such that $\mathrm{Ad}(\gamma)(y)\neq y$. Note that \[0\neq \mathrm{Ad}(\gamma)(y)-y\in \mathfrak{g}_{\bar B}^{\geq \lambda_{max}},\] which implies $\mathfrak{q}\cap\mathfrak{g}_{\bar B}^{\geq \lambda_{max}}\neq 0$.  We get a contradiction.
\end{proof}

\begin{lemma}\label{lemma_equivalent reduction} Suppose $b\in G(\breve F)$. Suppose $\tilde\alpha\in \Delta_0$ such that $\langle\tilde\alpha, \nu_b\rangle>0$. Let $(b_{M_{\tilde\alpha}}, g)$ and $(\tilde{b}_{M_{\tilde\alpha}}, \tilde{g})$ be two reductions of $b$ to $M_{\tilde\alpha}$ with $[b_{M_{\tilde\alpha}}]=[\tilde{b}_{M_{\tilde\alpha}}]\in B(M_{\tilde\alpha})$ and $\nu_{b_{M_{\tilde\alpha}}}=\nu_b$. Then the two reductions $(b_{M_{\tilde\alpha}}, g)$ and $(\tilde{b}_{M_{\tilde\alpha}}, \tilde{g})$ are equivalent.
\end{lemma}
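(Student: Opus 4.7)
The plan is to exploit the hypothesis $[b_{M_{\tilde\alpha}}]=[\tilde b_{M_{\tilde\alpha}}]$ in $B(M_{\tilde\alpha})$ to first reduce to the case where the two reductions have the \emph{same} underlying element of $M_{\tilde\alpha}(\breve F)$, and then show that the element of $G(\breve F)$ that relates the two witnesses $g,\tilde g$ automatically lies in $M_{\tilde\alpha}(\breve F)$, because it commutes with the Newton cocharacter of $b_{M_{\tilde\alpha}}$.

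More precisely, by the assumption $[b_{M_{\tilde\alpha}}]_{M_{\tilde\alpha}}=[\tilde b_{M_{\tilde\alpha}}]_{M_{\tilde\alpha}}$ there exists $m\in M_{\tilde\alpha}(\breve F)$ with $\tilde b_{M_{\tilde\alpha}}=m\,b_{M_{\tilde\alpha}}\sigma(m)^{-1}$. Replacing the reduction $(\tilde b_{M_{\tilde\alpha}},\tilde g)$ by the equivalent reduction $(b_{M_{\tilde\alpha}},\tilde g m)$, we may assume that $\tilde b_{M_{\tilde\alpha}}=b_{M_{\tilde\alpha}}$. Setting $h:=\tilde g^{-1}g\in G(\breve F)$, the identity $g b_{M_{\tilde\alpha}}\sigma(g)^{-1}=b=\tilde g b_{M_{\tilde\alpha}}\sigma(\tilde g)^{-1}$ becomes $h b_{M_{\tilde\alpha}}\sigma(h)^{-1}=b_{M_{\tilde\alpha}}$. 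To conclude, it therefore suffices to prove that $h$ lies in $M_{\tilde\alpha}(\breve F)$, for then $(b_{M_{\tilde\alpha}},g)=(hb_{M_{\tilde\alpha}}\sigma(h)^{-1},\tilde g h)$ exhibits the desired equivalence with $h$ playing the role in the definition.

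The main point is the functoriality of the slope morphism under $\sigma$-conjugation: for any $b,g\in G(\breve F)$, the slope morphism of $g b\sigma(g)^{-1}$ equals $g\nu_b g^{-1}$, as can be seen directly by identifying the corresponding isocrystals with $G$-structures via the intertwining isomorphism $\rho(g)$. Applied to our situation with $b=b_{M_{\tilde\alpha}}$ and the element $h$ above, this yields $h\,\nu_{b_{M_{\tilde\alpha}}}h^{-1}=\nu_{b_{M_{\tilde\alpha}}}$. Using the hypothesis $\nu_{b_{M_{\tilde\alpha}}}=\nu_b$, this means $h\in\mathrm{Cent}_G(\nu_b)(\breve F)=M_b(\breve F)$ (choosing $\nu_b$ as the standard dominant representative so that $M_b$ is the standard Levi).

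Finally, the hypothesis $\langle\tilde\alpha,\nu_b\rangle>0$ shows that $\tilde\alpha\notin\Delta_{M_b,0}$, hence $\Delta_{M_b,0}\subseteq\Delta_{M_{\tilde\alpha},0}$ and $M_b\subseteq M_{\tilde\alpha}$. Combining with the previous step gives $h\in M_{\tilde\alpha}(\breve F)$, which finishes the proof. The only delicate point is a compatibility of conventions: one must choose $\nu_b$ as a concrete (dominant) representative factoring through $A$ in order that its centralizer in $G$ is literally the standard Levi $M_b$, but this is the convention adopted throughout the paper, so no actual obstacle appears.
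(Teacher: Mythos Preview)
Your proof is correct and follows essentially the same approach as the paper's: reduce to $b_{M_{\tilde\alpha}}=\tilde b_{M_{\tilde\alpha}}$, observe that the element relating $g$ and $\tilde g$ lies in a $\sigma$-centralizer, and conclude via the centralizer of the Newton cocharacter that it lies in $M_{\tilde\alpha}(\breve F)$. The only cosmetic difference is that the paper works with $\tilde g g^{-1}\in J_b\subseteq M_{\tilde\alpha}(\breve F)$, whereas you track $h=\tilde g^{-1}g$ inside the $\sigma$-centralizer of $b_{M_{\tilde\alpha}}$; your formulation makes the role of the hypothesis $\nu_{b_{M_{\tilde\alpha}}}=\nu_b$ (and of the dominance convention for $\nu_b$) slightly more explicit, but the underlying argument is the same.
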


\begin{proof}As $[b_{M_{\tilde\alpha}}]=[\tilde{b}_{M_{\tilde\alpha}}]\in B(M_{\tilde\alpha})$, we may assume $b_{M_{\tilde\alpha}}=\tilde{b}_{M_{\tilde\alpha}}$. Then
\[\tilde{g}g^{-1}\in J_b=\{h\in G(\breve F)| b\sigma (h)= hb \}.\]Since $\langle\tilde\alpha, \nu_b\rangle>0$, $J_b\subseteq M_{\tilde\alpha}(\breve F)$. It follows that $(b_{M_{\tilde\alpha}}, g)$ and $(\tilde{b}_{M_{\tilde\alpha}}, \tilde{g})$ are equivalent.
\end{proof}

\section{Newton stratification and weakly admissible locus}\label{section_wa and Newton}
In this section, we suppose $G$ is quasi-split and $[b]\in A(G, \mu)$ is basic. Under this condition, the proof of \cite{CFS} Theorem 6.1 in fact shows the following finer result.

\begin{theorem}[\cite{CFS}]\label{theo_CFS}Let $[b']\in B(G, \kappa_G(b)-\mu^{\sharp}, \nu_b\mu^{-1})$.
\begin{enumerate}
\item If $(G, \nu_b(w_0\mu^{-1})^{\diamond}, b')$ is HN-decomposable, then $\Fc(G, b, \mu)^{[b']}\cap \Fc(G, b, \mu)^{wa}=\emptyset$;
\item If $(G, \nu_b(w_0\mu^{-1})^{\diamond}, b')$ is HN-indecomposable and $[b']$ a minimal element in the set $B(G, \kappa_G(b)-\mu^{\sharp}, \nu_b\mu^{-1})\backslash [1]$ for the dominance order, then $\Fc(G, b, \mu)^{[b']}\cap \Fc(G, b, \mu)^{wa}\neq\emptyset$.
\end{enumerate}
\end{theorem}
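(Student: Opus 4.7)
For (1), the plan is to exhibit a reduction of $\E_b$ that violates the weak admissibility criterion of Proposition \ref{proposition_wa}. HN-decomposability of $(G,\nu_b(w_0\mu^{-1})^{\diamond}, b')$ supplies a proper standard Levi $M \supseteq M_{b'}$ with $\nu_b(w_0\mu^{-1})^{\diamond} - [\nu_{b'}] \in \langle \Phi^{\vee}_{0,M}\rangle_{\Q}$. Since $\nu_{\E_{b'}} = -w_0[\nu_{b'}]$ by Proposition \ref{prop_invariant_bundle}(1), the Harder--Narasimhan canonical reduction of $\E_{b'}$ is to the standard parabolic whose Levi is $\tilde M_{b'} := w_0 M_{b'} w_0^{-1}$; enlarging through $\tilde M_{b'} \subseteq \tilde M := w_0 M w_0^{-1}$ produces a reduction of $\E_{b'}$ to the standard parabolic $Q$ with Levi $\tilde M$. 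Transporting along the isomorphism $\E_{b,x} \simeq \E_{b'}$ and Lemma \ref{lemma_redction modfication} yields a reduction of $\E_b$ to $Q$. Since $b$ is basic, $\E_b$ is semi-stable, and any such reduction is equivalent to one induced by a reduction $b_{\tilde M}$ of $b$ to $\tilde M$. Combining Proposition \ref{prop_invariant_bundle}(2) applied to the Levi bundle $(\E_{b,x})_Q \times^Q \tilde M$ with the HN-decomposability relation then produces a dominant $\chi \in X^{*}(Q/Z_G)^+$ with $\deg\chi_{*}(\E_{b,x})_Q > 0$, contradicting weak admissibility.

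For (2), the plan is to construct a weakly admissible $x$ in the Newton stratum $\Fc(G,\mu,b)^{[b']}$ by adapting the $\tilde J_b$-action strategy of Proposition \ref{Prop_not basic}. The stratum is nonempty by Proposition \ref{prop:description modification basique}; pick any starting point $x_0$. If $x_0$ already lies in the weakly admissible locus we are done. Otherwise, any violation is witnessed by a standard parabolic $Q$ with a reduction $b_{M_Q}$ of $b$ to its Levi and by a dominant character, and by Theorem \ref{theoSchi} the associated slope vector $v$ satisfies $0 \neq v \leq \nu_{\E_{b'}}$. Using Corollary \ref{Coro_HN equality among reduction to M}, Lemma \ref{lemma:reduction to P modification}, and Proposition \ref{prop:description modification basique}, the Levi push-out of the induced reduction is a $G$-bundle of the form $\E_{b''}$ for some $[b''] \in B(G,\kappa_G(b)-\mu^{\sharp},\nu_b\mu^{-1})$ with $\nu_{\E_{b''}}$ matching $v$. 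The minimality of $[b']$ thus forces $[b''] = [b']$ and $Q$ to be the Harder--Narasimhan parabolic of $\E_{b'}$; i.e.\ the only possible violating reduction of $\E_{b,x_0}$ is the canonical one. HN-indecomposability of $(G,\nu_b(w_0\mu^{-1})^{\diamond}, b')$ then pins this canonical reduction down up to equivalence (via Lemma \ref{lemma_equivalent reduction}), and a generic element $\gamma \in \tilde J_b(K)^{\geq \lambda_{\max}}\setminus\{1\}$ destroys this single violation while preserving the Newton stratum, in direct analogy with the argument at the end of the proof of Proposition \ref{Prop_not basic}.

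The main obstacle lies in (2): one must verify that the translate $\gamma(x_0)$ introduces no \emph{new} weak-admissibility violations at other standard parabolics while breaking the existing one at the HN-parabolic. This amounts to tracking how reductions of $\E_b$ to a fixed parabolic transform under $\gamma$, and exploits the filtration of $\tilde J_b$ by the subgroups $\tilde J_b^{\geq \lambda}$ reviewed in Section \ref{subsection_Jb}: the pieces with $\lambda = \lambda_{\max}$ act trivially on the spaces of reductions whose slope vectors are strictly smaller than $\nu_{\E_{b'}}$, while they move the canonical one. Once this is in place the argument closes along the same lines as in \cite{CFS}, of which Theorem \ref{theo_CFS} is essentially the two-part refinement used to deduce Theorem \ref{theo:CFS basic}.
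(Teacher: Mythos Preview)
Your approach to part (2) has a fatal gap. You adapt the $\tilde J_b$-action argument of Proposition~\ref{Prop_not basic}, choosing $\gamma \in \tilde J_b(K)^{\geq \lambda_{\max}}\setminus\{1\}$ with $\lambda_{\max}=\max_{\gamma\in\Phi}\langle\nu_b,\gamma\rangle$. But the standing hypothesis of Section~\ref{section_wa and Newton} is that $[b]$ is \emph{basic}: then $\nu_b$ is central, so $\langle\nu_b,\gamma\rangle=0$ for every root and $\lambda_{\max}=0$. The filtration of Section~\ref{subsection_Jb} collapses entirely: $\tilde J_b^{>0}(K)=\{1\}$ and $\tilde J_b(K)=J_b(F)$. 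There is no unipotent piece to move $x_0$ with. Your appeal to Lemma~\ref{lemma_equivalent reduction} fails for the same reason, since its hypothesis $\langle\tilde\alpha,\nu_b\rangle>0$ is never satisfied for basic $b$. Worse, $J_b(F)$ acts on $\Fc(G,\mu)$ through automorphisms of the isocrystal $(V_{\breve F},b\sigma)$ and therefore \emph{preserves} the weakly admissible locus; it cannot carry a point outside $\Fc^{wa}$ into it. The mechanism of Proposition~\ref{Prop_not basic} is specific to non-basic $b$ and does not transfer to this setting. (The paper does not supply its own proof here; it extracts the statement from the proof of \cite{CFS}~Theorem~6.1, where the construction of a weakly admissible point in a minimal HN-indecomposable stratum proceeds by a different route, not via $\tilde J_b$.)

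There is also a gap in part (1). After transporting the canonical reduction of $\E_{b'}$ to a reduction of $\E_b$ to $Q$ via Lemma~\ref{lemma_redction modfication}, you assert that ``since $b$ is basic \ldots\ any such reduction is equivalent to one induced by a reduction $b_{\tilde M}$ of $b$''. But Proposition~\ref{proposition_wa} only tests reductions of $\E_{b,x}$ that arise from reductions of $b$ to a Levi, and the map from reductions of $b$ to reductions of $\E_b$ is only injective, not surjective (cf.\ the Example following the definition of reductions of $b$); basicness of $b$ does not change this. The argument in \cite{CFS} avoids the issue by going in the other direction: one first chooses a reduction $b_{\tilde M}$ of $b$ (which exists since $\nu_b$ is central and hence factors through any Levi), and then computes the degree of the induced reduction of $\E_{b,x}$ directly via Lemma~\ref{lemma:reduction to P modification} and Proposition~\ref{prop_invariant_bundle}(2), reading off positivity from the HN-decomposability relation.
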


Inspired by this theorem, we have the following conjecture.

\begin{conjecture}\label{conjecture}Suppose  $[b']\in B(G, \kappa_G(b)-\mu^{\sharp}, \nu_b\mu^{-1})$ with $(G, \nu_b(w_0\mu^{-1})^{\diamond}, b')$ HN-indecomposable, then \[\Fc(G, b, \mu)^{[b']}\cap \Fc(G, b, \mu)^{wa}\neq\emptyset.\]
\end{conjecture}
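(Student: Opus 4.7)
My plan is to dualize the construction used in the proof of Proposition~\ref{Prop_not basic}. Since $[b]$ is basic, a point in $\Fc(G,\mu,b)^{[b']}(C)$ corresponds via the Beauville--Laszlo gluing to a modification of $\E_{b'}$ of some dominant type $\mu'$ (a Weyl conjugate of $-w_0\mu$) with outcome $\E_b$. Under this dual viewpoint, the stratum $\Fc(G,\mu,b)^{[b']}$ identifies with a Newton stratum of $\E_b$ inside an ``opposite'' flag variety $\Fc(G,\mu',b')$, and the HN-indecomposability of $(G, \nu_b(w_0\mu^{-1})^{\diamond}, b')$ should translate, via Lemma~\ref{lemma_levi_oppo} and the centrality of $\nu_b$, into HN-indecomposability of the triple $(G,\mu',b')$.

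First I would make this dual identification precise and verify the translation of hypotheses; this reduces to bookkeeping with the formula $\nu_{\E_{b,x}}=-w_0[\nu_{b'}]$ from Proposition~\ref{prop_invariant_bundle} combined with Corollary~\ref{coro_HN-indecomp}. Then, assuming $b'$ is not basic, I would apply Proposition~\ref{Prop_not basic} to $(G,\mu',b')$ to produce a weakly admissible point $y \in \Fc(G,\mu',b')^{wa}$ lying in a nontrivial Newton stratum indexed by some $[b''] \in B(G)$. The construction of $y$ is explicit: $y = \gamma \cdot y_0$, where $y_0$ is built from the minimal reduction provided by Lemma~\ref{Lemma_minimal_Mb-modification} applied to $b'$, and $\gamma$ is any nontrivial element in $\tilde{J}^{\geq \lambda_{\max}}_{b'}(K)$. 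In the special case where the stratum $[b'']$ of $y$ coincides with $[b]$ -- equivalently, where $\mu'$ matches the cocharacter $w\mu$ chosen by Lemma~\ref{Lemma_minimal_Mb-modification} for $b'$ -- the point $y$ yields, under the Beauville--Laszlo identification, a candidate point $x \in \Fc(G,\mu,b)^{[b']}(C)$. The case where $b'$ is itself basic reduces directly to Theorem~\ref{theo_CFS}(2).

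The main obstacle is transferring the weak admissibility of $y$, which is a condition on reductions of $\E_{b'}$, to weak admissibility of $x$, which is a condition on reductions of $\E_b$; these two conditions are a priori unrelated. To bridge them I would exploit that $\E_b$ is basic, so by Corollary~\ref{Coro_HN equality among reduction to M} any reduction of $\E_b$ to a standard parabolic $Q$ has semisimple Levi quotient that is trivial up to central twist, so such reductions are highly constrained. Combined with an analogue of Lemma~\ref{lemma_reduction to smaller Levi} comparing reductions of $\E_b$ and of $\E_{b'}$ through their common generic relative position, one expects that any parabolic reduction of $\E_{b,x}$ witnessing non-weak-admissibility of $x$ would yield a corresponding reduction of $\E_{b',y}$ contradicting the choice of $y$. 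I expect this step to succeed only under additional hypotheses -- effectively, only when the HN-indecomposability condition forces the obstructing parabolic on the $\E_b$-side to match the HN parabolic of $\E_{b'}$ -- which is what should correspond to the ``very special case'' handled by Proposition~\ref{prop_conj special case}; the full conjecture requires the stronger Hodge-Newton reduction result established by Viehmann.
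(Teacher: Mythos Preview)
The paper does not prove this conjecture in full; it establishes only the special case of Proposition~\ref{prop_conj special case} (for $G=\mathrm{GL}_n$ with $\E_{b'}$ of a few explicit shapes), and remarks that the general statement is due to Viehmann.

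Your approach is genuinely different from the paper's proof of that special case. The paper works by direct construction in $\mathrm{Bun}_X$: it writes down an explicit short exact sequence $0 \to \E' \to \E_b \to \E'' \to 0$ with $\E', \E''$ semistable, together with minuscule modifications $\tilde\E' \hookrightarrow \E'$, $\tilde\E'' \hookrightarrow \E''$ whose extension realizes $\E_{b'}$; the induced modification of $\E_b$ gives the point $x$, and weak admissibility is checked by hand, using that any sub-bundle $\mathcal G \subset \E_{b'}$ of positive degree must contain or be contained in $\tilde\E'$. No duality or $\tilde J$-action appears; the argument is entirely specific to $\mathrm{GL}_n$ and to the listed shapes of $\E_{b'}$.

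Your dualization idea has a gap beyond the one you already flag. Applying the construction of Proposition~\ref{Prop_not basic} to $(G,\mu',b')$ does not give a point in a stratum of your choosing: the point $\gamma(x_G)$ lies in the \emph{specific} stratum indexed by the image in $B(G)$ of the basic class $[b'_{M_{\tilde\alpha}}]$ produced by Lemma~\ref{Lemma_minimal_Mb-modification}, with $\kappa_{M_{\tilde\alpha}}(b'_{M_{\tilde\alpha}}) = -(\tilde\beta^\vee)^\sharp + \e$. There is no mechanism in that argument to force this class to coincide with the given basic $[b]$, so your ``special case where $[b'']=[b]$'' is not something you can arrange --- it either happens for one particular $b$ or it does not. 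Thus even before the weak-admissibility transfer problem, the construction does not produce a point in the correct stratum.

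The transfer problem you identify is also genuine and is not resolved by Corollary~\ref{Coro_HN equality among reduction to M}: weak admissibility of $x$ is tested against \emph{all} reductions $b_{M_Q}$ of $b$ (equivalently, against all sub-isocrystals when $G=\mathrm{GL}_n$), and for basic $b$ there are many such reductions bearing no relation to the HN filtration of $\E_{b'}$. In the paper's argument this is handled by the explicit control over the semistable pieces $\E'$, $\E''$, not by any general comparison of reductions on the two sides.
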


\begin{remark}The full conjecture is proved by Viehmann in \cite{Vi} very recently.
\end{remark}

In the rest of the section, we will prove this conjecture for the linear algebraic groups for special $\mu$.

For $r, s\in \Z$ with $r>0$,  let $\mathcal{O}([\frac{s}{r}]):=\mathcal{O} (\frac{s}{r})^{d}$ if $d=(s, r)$. Then $\mathrm{deg}\mathcal{O}([\frac{s}{r}])=s$ and $\mathrm{rank}\mathcal{O}([\frac{s}{r}])=r$.

\begin{proposition}\label{prop_conj special case}Let $G=\mathrm{GL}_n$. Suppose $[b']\in B(G, \kappa_G(b)-\mu^{\sharp}, \nu_b\mu^{-1})$ with $(G, \nu_b(w_0\mu^{-1})^{\diamond}, b')$ HN-indecomposable. If $\E_{b'}\simeq \mathcal{O}([\frac{2}{r_1}])\oplus \mathcal{O}([\frac{-2}{r_2}])$ or $\mathcal{O}([\frac{1}{r_1}])\oplus \mathcal{O}([\frac{0}{r_2}])\oplus \mathcal{O}([\frac{-1}{r_3}])$ for some $r_1, r_2, r_3>0$, then \[\Fc(G, b, \mu)^{[b']}\cap \Fc(G, b, \mu)^{wa}\neq\emptyset.\] In particular, the conjecture \ref{conjecture} holds when $\mu=(1^{(r)}, 0^{(n-r)})$ with $r(n-r)\leq 2n$.
\end{proposition}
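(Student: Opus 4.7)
The plan is to construct an explicit weakly admissible point in the Newton stratum $\Fc(G,\mu,b)^{[b']}(C)$ by using the HN filtration of $\E_{b'}$ to reduce to simpler basic constructions on a Levi, and then carefully choosing the lift to a point of $\Fc(G,\mu)$.

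First I would let $P \subseteq G = \GL_n$ be the standard parabolic attached to the HN filtration of $\E_{b'}$, with Levi $M = \prod_{i} \GL_{r_i}$ ($i \in \{1,2\}$ in Case~1, $i \in \{1,2,3\}$ in Case~2). Applying Lemma~\ref{lemma_BGmu reduction Levi} to the basic class $[b]$, I pick a reduction $b_M = (b^{(i)})_i$ of $b$ to $M$ with each $[b^{(i)}] \in B(\GL_{r_i})$ basic. On each factor, the target HN piece $\mathcal{O}([\cdot/r_i])$ of $\E_{b'}$ is itself semi-stable and corresponds to a basic class $[b'^{(i)}] \in B(\GL_{r_i})$ which is trivially HN-indecomposable and minimal in its Kottwitz component. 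Theorem~\ref{theo_CFS}(2) applied to $(\GL_{r_i}, \mu^{(i)}, b^{(i)})$ then produces a weakly admissible $x^{(i)} \in \Fc(\GL_{r_i}, \mu^{(i)}, b^{(i)})^{wa}(C)$ with $\E_{b^{(i)}, x^{(i)}} \simeq \mathcal{O}([\cdot/r_i])$.

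Next, the tuple $x_M = (x^{(i)})_i$ defines a point of $\Fc(M, \tilde w_0 \mu)(C)$, and by Lemma~\ref{lemma:reduction to P modification} every lift $x \in \mathrm{pr}_{\tilde w_0}^{-1}(x_M)(C) \subseteq \Fc(G,\mu)^{\tilde w_0}(C)$ satisfies $(\E_{b,x})_P \times^P M \simeq \bigoplus_i \mathcal{O}([\cdot/r_i])$. A key observation is that the relevant $\mathrm{Ext}^1$ groups between successive HN pieces of $\E_{b'}$ vanish in both cases: for $i<j$, the sheaf $\mathcal{O}([\cdot/r_j]) \otimes \mathcal{O}([\cdot/r_i])^\vee$ has strictly positive slope, so $H^1 = 0$. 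Hence every such $\E_{b,x}$ is the split extension, automatically isomorphic to $\E_{b'}$, and the entire fiber $\mathrm{pr}_{\tilde w_0}^{-1}(x_M)$ lies inside the Newton stratum $\Fc(G, \mu, b)^{[b']}$.

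Finally, I would choose $x$ in this fiber to be weakly admissible. The split (zero-section) lift fails, because the sub-isocrystal $V_1$ from the decomposition $V = \bigoplus V_i$ induced by $b_M$ produces a sub-bundle of $\E_{b,x}$ of positive degree. A generic lift in the $R_u(P)(C)$-torsor over $x_M$ breaks this splitting. By Proposition~\ref{proposition_wa}, weak admissibility amounts to $\deg \chi_*(\E_{b,x})_Q \leq 0$ for every standard parabolic $Q$, every reduction $b_{M_Q}$ of $b$, and every dominant $\chi \in X^*(Q/Z_G)^+$; each violation is controlled by a Schubert condition on $W \cap V'_C$ for the corresponding sub-isocrystal $V' \subseteq V$, cutting out a proper closed subset of the fiber. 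The main obstacle is this transversality step: showing that the union of the bad Schubert subsets, taken over all relevant sub-isocrystals $V'$ and reductions $b_{M_Q}$, does not cover the entire fiber. The HN-indecomposability of $(G, \nu_b(w_0\mu^{-1})^\diamond, b')$ combined with the specific small-jump HN structure of $\E_{b'}$ in Cases~1 and 2 imposes rigid constraints on the possible "critical" sub-isocrystals and gives the codimension bound needed to ensure the existence of a weakly admissible lift.
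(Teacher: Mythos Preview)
There is a genuine gap that makes your approach unworkable as stated: the very reduction $b_M$ you use to build the point is a witness against weak admissibility of \emph{every} lift in your fibre.

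Concretely, take Case~1 with $M=\GL_{r_1}\times\GL_{r_2}$ and your chosen reduction $b_M=(b^{(1)},b^{(2)})$. This is a direct-sum decomposition of the isocrystal, so $V_1$ (the first block) is a sub-isocrystal. For any $x\in\mathrm{pr}_{\tilde w_0}^{-1}(x_M)$, Lemma~\ref{lemma:reduction to P modification} computes the $P$-reduction of $\E_{b,x}$ induced by this sub-isocrystal: its Levi part is $\E_{b_M,\mathrm{pr}_{\tilde w_0}(x)}=\E_{b_M,x_M}$, whose first component is $\E_{b^{(1)},x^{(1)}}\simeq\mathcal{O}([\tfrac{2}{r_1}])$ of degree~$2>0$. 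Thus $\deg\chi_*(\E_{b,x})_P>0$ for the relevant $\chi\in X^*(P/Z_G)^+$, and Proposition~\ref{proposition_wa} fails \emph{for every $x$ in the fibre}. Your ``Schubert condition'' for this particular $V'$ is not a proper closed subset --- it is the whole fibre. The same happens in Case~2 with the block of slope $1/r_1$. (A secondary issue: your invocation of Theorem~\ref{theo_CFS}(2) on each factor is also off, since there $[b'^{(i)}]$ is the \emph{basic} element of its Kottwitz set, not a non-basic minimal one; but this is easily patched and is not the real obstruction.)

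The paper avoids this trap by \emph{not} working through a Levi reduction of $b$. Instead it writes the semi-stable bundle $\E_b=\mathcal{O}([\tfrac{s}{r}])$ as a nontrivial extension
\[
0\longrightarrow \E'\longrightarrow \E_b\longrightarrow \E''\longrightarrow 0
\]
with $\E',\E''$ semi-stable of slopes \emph{different} from $s/r$ (existence by \cite{BFH}), and then matches this with the HN filtration of $\E_{b'}$ via factorwise minuscule modifications. The resulting $x$ lies in $\Fc(G,\mu,b)^{[b']}$; and because $\E'$ does \emph{not} come from a sub-isocrystal of $\E_b$, the only sub-isocrystals $\mathcal G'\subset\E_b$ that could possibly destabilise are forced (by the specific small HN jumps of $\E_{b'}$) to contain or be contained in $\E'$, which is ruled out by the slope mismatch. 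The key move you are missing is precisely this: the filtration on $\E_b$ must be chosen to be transverse to the isocrystal decomposition, not induced by it.
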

\begin{proof} For the last assertion, if $\mu=(1^{(r)}, 0^{(n-r)})$ with $r(n-r)\leq 2n$, then by Proposition \ref{prop:description modification basique}, $[b']\in B(G, \kappa_G(b)-\mu^{\sharp}, \nu_b\mu^{-1})\backslash [1]$ implies $\E_{b'}\simeq \mathcal{O}([\frac{2}{r_1}])\oplus \mathcal{O}([\frac{-2}{r_2}])$ or $\mathcal{O}([\frac{1}{r_1}])\oplus \mathcal{O}([\frac{0}{r_2}])\oplus \mathcal{O}([\frac{-1}{r_3}])$ for some $r_1, r_3>0$ and $r_2\geq 0$. So the last assertion follows from the first one combined with Theorem \ref{theo_CFS} (when $r_2=0$). Now it remains to prove the first assertion.

  We claim that there exists an exact sequence of vector bundles
\begin{eqnarray}\label{eqn_extension}0\rightarrow \E'\rightarrow \E_b\rightarrow \E''\rightarrow 0\end{eqnarray}
 satisfying a commutative diagram
 \[
\xymatrix{0\ar[r]& \E' \ar[r] &\E_b \ar[r] &\E''\ar[r] &0\\
 0 \ar[r]& \tilde{\E}'\ar[r]\ar@{^(->}[u] &\E_{b'}\ar[r]\ar@{^(->}[u] &\tilde{\E}''\ar[r]\ar@{^(->}[u] &0  } \]
where $\E'$ and $\E''$ are semi-stable vector bundles and the vertical arrows are the modifications of minuscule type. Indeed, suppose $\E_b=\mathcal{O}([\frac{s}{r}])$.
If $\E_{b'}\simeq \mathcal{O}([\frac{2}{r_1}])\oplus \mathcal{O}([\frac{-2}{r_2}])$,
then $r=r_1+r_2$.  Let  $\tilde{\E}'=\mathcal{O}([\frac{2}{r_1}])$, $\tilde{\E}''=\mathcal{O}([\frac{-2}{r_2}])$. If $s\leq r_2$, then let $\E'=\mathcal{O}([\frac{2}{r_1}])$ and $\E''=\mathcal{O}([\frac{s-2}{r_2}])$. Otherwise, let $\E'=\mathcal{O}([\frac{s+2-r_2}{r_1}])$ and $\E''=\mathcal{O}([\frac{r_2-2}{r_2}])$.  If $\E'_b\simeq \mathcal{O}([\frac{1}{r_1}])\oplus \mathcal{O}([\frac{0}{r_2}])\oplus \mathcal{O}([\frac{-1}{r_3}])$, then $r_1+r_2+r_3=r$. We can easily check that one of the following two equalities holds:

\[\frac{s-1}{r-r_1}\leq \frac{r_3-1}{r_3},\ \   \frac{s-r_3+1}{r-r_3}\geq \frac{1}{r_1}.\](Otherwise, the equalities give upper and lower bounds for $s$. The comparison of the two bounds leads to a contradiction $r<r_1+r_3$). In the former case, let
\[\begin{array}{rlrl}
   \E'&=\mathcal{O}([\frac{1}{r_1}]), &\E''&=\mathcal{O}([\frac{s-1}{r-r_1}]), \\ \tilde{\E}'&=\mathcal{O}([\frac{1}{r_1}]), &\tilde{\E}''&=\mathcal{O}([\frac{0}{r_2}])\oplus\mathcal{O}([\frac{-1}{r_3}]) \end{array}\]
In the later case, let

\[\begin{array}{rlrl}
   \E'&=\mathcal{O}([\frac{s-r_3+1}{r-r_3}]), &\E''&=\mathcal{O}([\frac{r_3-1}{r_3}]), \\ \tilde{\E}'&=\mathcal{O}([\frac{1}{r_1}])\oplus\mathcal{O}([\frac{0}{r_2}]), &\tilde{\E}''&=\mathcal{O}([\frac{-1}{r_3}]) \end{array}\]

The existence of the extension \ref{eqn_extension} is due to \cite{BFH}, and the existence of the modifications given by the left and right vertical arrows are given by Proposition \ref{prop:description modification basique}.  The vertical arrow in the middle of the commutative diagram gives a point in $x\in\Fc(G, b, \mu)^{[b']}(C)$. It suffices to show $x\in\Fc^{wa}(C)$. Suppose $\mathcal{G}\subseteq \E_{b'}$ is any sub-vector bundle of $\E_{b'}$ corresponding to a reduction $(\E_{b, x})_P$ of $\E_{b'}$ to a maximal parabolic subgroup $P$ such that $\deg\mathcal{G}>0$. Let $\mathcal{G}'\subseteq \E_b$ be the corresponding sub-vector bundle of $\E_b$ corresponding to the reduction $(\E_b)_P$ induced by $(\E_{b, x})_P$. We want to show that $\mathcal{G}'$ does not come from subisocrystals.

Suppose $\mathcal{G}'$ comes from subisocrystals. The fact that $\deg \mathcal{G}>0$ combined with the particular choice of $b'$ implies that $\mathcal{G}\supseteq \tilde{\E}'$ or $\mathcal{G}\subseteq \tilde{\E}'$. Then $\mathcal{G}'\supseteq \E'$ or $\mathcal{G}'\subseteq \E'$. The later case is obviously impossible. For the former one, $\E''=\E_b/\mathcal{E}'$ must have a direct summand of slope $\frac{s}{r}$ which is also impossible as $\E''$ is semi-stable.
\end{proof}


\end{document}